\DeclareFontFamily{OT2}{cmr}{\hyphenchar\font45 }
\DeclareFontShape{OT2}{cmr}{m}{n}{
<5><6><7><8><9>gen*wncyr
<10><10.95><12><14.4><17.28><20.74><24.88>wncyr10}{}
\DeclareFontShape{OT2}{cmr}{b}{n}{
<5><6><7><8><9>gen*wncyb
<10><10.95><12><14.4><17.28><20.74><24.88>wncyb10}{}
\DeclareMathAlphabet{\mathcyr}{OT2}{cmr}{m}{n}
\DeclareMathAlphabet{\mathcyb}{OT2}{cmr}{b}{n}
\SetMathAlphabet{\mathcyr}{bold}{OT2}{cmr}{b}{n}
\numberwithin{equation}{section}
\newcommand{\shortmathcal}[1]{\@tfor\ch:=#1\do{
\expandafter\edef\csname c\ch\endcsname{\noexpand\mathcal{\ch}}
}}
\newcommand{\shortmathbb}[1]{\@tfor\ch:=#1\do{
\expandafter\edef\csname bb\ch\endcsname{\noexpand\mathbb{\ch}}
}}
\newcommand{\shortmathbf}[1]{\@tfor\ch:=#1\do{
\expandafter\edef\csname b\ch\endcsname{\noexpand\mathbf{\ch}}
}}
\newcommand{\shortboldsymbol}[1]{\@tfor\ch:=#1\do{
\expandafter\edef\csname bs\ch\endcsname{\noexpand\boldsymbol{\ch}}
}}
\newcommand{\shortmathfrak}[1]{\@tfor\ch:=#1\do{
\expandafter\edef\csname f\ch\endcsname{\noexpand\mathfrak{\ch}}}}
\newcommand{\shortmathscr}[1]{\@tfor\ch:=#1\do{
\expandafter\edef\csname s\ch\endcsname{\noexpand\mathscr{\ch}}}}
\newcommand{\shortmathrm}[1]{\@tfor\ch:=#1\do{
\expandafter\edef\csname r\ch\endcsname{\noexpand\mathrm{\ch}}
}}
\let\@@span\span
\def\sp@n{\@@span\omit\advance\@multicnt\m@ne}
\newcommand{\id}{\mathrm{id}}
\newcommand{\emp}{\varnothing}
\newcommand{\ep}{\varepsilon}
\DeclareMathOperator{\Fil}{Fil}
\newcommand{\sh}{\mathbin{\mathcyr{sh}}}
\newcommand{\MU}{\mathsf{MU}}
\newcommand{\LU}{\mathsf{LU}}
\newcommand{\BIMU}{\mathsf{BIMU}}
\newcommand{\as}{\mathsf{as}}
\newcommand{\al}{\mathsf{al}}
\newcommand{\mmu}{\mathsf{mu}}
\newcommand{\invmu}{\mathsf{invmu}}
\newcommand{\adari}{\mathsf{adari}}
\newcommand{\preari}{\mathsf{preari}}
\newcommand{\expari}{\mathsf{expari}}
\newcommand{\logari}{\mathsf{logari}}
\newcommand{\invgari}{\mathsf{invgari}}
\newcommand{\fragari}{\mathsf{fragari}}
\newcommand{\fragira}{\mathsf{fragira}}
\newcommand{\invgami}{\mathsf{invgami}}
\newcommand{\invgani}{\mathsf{invgani}}
\newcommand{\gira}{\mathsf{gira}}
\newcommand{\girat}{\mathsf{girat}}
\newcommand{\garit}{\mathsf{garit}}
\newcommand{\gari}{\mathsf{gari}}
\newcommand{\arit}{\mathsf{arit}}
\newcommand{\ari}{\mathsf{ari}}
\newcommand{\irat}{\mathsf{irat}}
\newcommand{\gaxit}{\mathsf{gaxit}}
\newcommand{\gaxi}{\mathsf{gaxi}}
\newcommand{\axit}{\mathsf{axit}}
\newcommand{\gamit}{\mathsf{gamit}}
\newcommand{\amit}{\mathsf{amit}}
\newcommand{\ganit}{\mathsf{ganit}}
\newcommand{\gani}{\mathsf{gani}}
\newcommand{\anit}{\mathsf{anit}}
\newcommand{\GARI}{\mathsf{GARI}}
\newcommand{\ARI}{\mathsf{ARI}}
\newcommand{\fes}{\mathfrak{es}}
\newcommand{\fos}{\mathfrak{os}}
\newcommand{\fess}{\mathfrak{ess}}
\newcommand{\foss}{\mathfrak{oss}}
\newcommand{\dess}{\ddot{\mathfrak{e}}\mathfrak{ss}}
\newcommand{\doss}{\ddot{\mathfrak{o}}\mathfrak{ss}}
\newcommand{\foz}{\mathfrak{oz}}
\newcommand{\ful}[1]{{}_{#1}\lceil}
\newcommand{\fll}[1]{{}_{#1}\lfloor}
\newcommand{\fur}[1]{\rceil_{#1}}
\newcommand{\flr}[1]{\rfloor_{#1}}
\renewcommand{\neg}{\mathsf{neg}}
\newcommand{\swap}{\mathsf{swap}}
\newcommand{\anti}{\mathsf{anti}}
\newcommand{\pari}{\mathsf{pari}}
\newcommand{\push}{\mathsf{push}}
\newcommand{\mantar}{\mathsf{mantar}}
\newcommand{\gantar}{\mathsf{gantar}}
\newcommand{\crash}{\mathsf{crash}}
\renewcommand{\slash}{\mathsf{slash}}
\newcommand{\leng}{\mathsf{leng}}
\newcommand{\dTo}{\mathfrak{T}\ddot{\mathfrak{o}}}
\newcommand{\der}{\mathsf{der}}
\newcommand{\dro}{\fr\ddot{\fo}}
\newcommand{\Rush}{\mathsf{Rush}}
\newcommand{\ORush}{\fO\text{-}\Rush}
\newcommand{\Esena}{\fE\text{-}\mathsf{sena}}
\newcommand{\baral}{\underline{\al}}
\newcommand{\barfol}{\underline{\mathfrak{ol}}}
\newcommand{\Emantar}{\fE\text{-}\mantar}
\newcommand{\Omantar}{\fO\text{-}\mantar}
\newcommand{\Enegpush}{\fE\text{-}\neg\push}
\newcommand{\Eneg}{\fE\text{-}\neg}
\newcommand{\Eter}{\fE\text{-}\mathsf{ter}}
\newcommand{\Epush}{\fE\text{-}\push}
\newcommand{\Eswap}{\fE\text{-}\swap}
\newcommand{\dmr}{\mathfrak{dmr}}
\newcommand{\krv}{\mathfrak{krv}}
\newcommand{\swamu}{\mathsf{swamu}}
\newcommand{\answamu}{\mathsf{answamu}}
\newcommand{\fol}{\mathfrak{ol}}
\newcommand{\lead}{\mathsf{lead}}
\newcommand{\preira}{\mathsf{preira}}
\theoremstyle{definition}
\newtheorem{theorem}{Theorem}[section]
\newtheorem{proposition}[theorem]{Proposition}
\newtheorem{lemma}[theorem]{Lemma}
\newtheorem{corollary}[theorem]{Corollary}
\newtheorem{definition}[theorem]{Definition}
\theoremstyle{remark}
\newtheorem{remark}[theorem]{Remark}
\title{Ecalle's senary relation and dimorphic structures}
\author{Hanamichi Kawamura}
\address[Hanamichi Kawamura]{Department of Mathematics, Graduate School of Science, Tokyo University of Science, 1-3 Kagurazaka, Shinjuku-ku, Tokyo, 162-8601, Japan}
\email{1125512@ed.tus.ac.jp}
\subjclass[2020]{17B40.}
\keywords{bimoulds, senary relation, dimorphy, flexion}
\begin{document}
\begin{abstract}
  This paper gives a completed proof of \'{E}calle's \emph{senary relation} in the original form. As an application, we obtain another proof of the fact that the double shuffle Lie algebra injects into the Kashiwara--Vergne Lie algebra.
  We also give two explicit isomorphisms between some dimorphic Lie algebras and their linearized version.
\end{abstract}
\maketitle
\section{Introduction}
This is the sequel of the author's work \cite{kawamura25}, as a part of the project describing the theory of flexion units related to number theory.
In this article, we particularly focus on the \emph{senary relation}, which firstly appeared in \'{E}calle \cite[(3.58)]{ecalle11}.
The original statement is in the context of \emph{dimorphic structures} in his words, describing the strucures of objects simulatenously obeying two types of symmetries.
Among many symmetries \'{E}calle considered, we investigate the \emph{twisted symmetries}, defined once we fix each flexion unit. Together with the usual alternality, they form a dimorphy, and our first main theorem stated below says that it implies the senary relation. For the symbols appearing in below statements, see the following sections and \cite{kawamura25}.

\begin{theorem}[$=$ Proposition \ref{prop:enegpush} $+$ Theorem \ref{thm:357}]\label{thm:main}
  Let $\fE$ be a flexion unit, $\fO$ its conjugate and $B\in\ARI_{\baral/\barfol}$.
  Then $B$ satisfies the senary relation
  \begin{equation}\label{eq:senary}
    \Eter(B)=(\push\circ\mantar\circ\Eter\circ\mantar)(B).
  \end{equation}
\end{theorem}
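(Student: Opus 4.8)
The decomposition displayed in the statement suggests the natural strategy: prove \eqref{eq:senary} in two stages, first a symmetry statement special to $\ARI_{\baral/\barfol}$ (Proposition~\ref{prop:enegpush}), then a structural identity among flexion operators that becomes available once that symmetry holds (Theorem~\ref{thm:357}).

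For the first stage, the plan is to extract from the dimorphy $\baral/\barfol$ the single derived symmetry that the senary relation actually needs, namely invariance under the $\fE$-twisted push-negation operator: $\Enegpush(B)=B$. This is the flexion-twisted analogue of the classical push-invariance one has in the untwisted dimorphic space. I would prove it by conjugating the $\barfol$-condition on $B$ through the flexion operations twisted by $\fE$ --- rewriting it via $\Eswap$ as an ordinary alternality-type statement, so that $B$ and its conjugate are controlled together --- and then running the standard push-invariance argument in this conjugated picture; transporting the conclusion back introduces exactly the $\neg$- and $\fE$-twists recorded in $\Enegpush$. The bookkeeping here is about how the unit $\fE$ and its conjugate $\fO$ behave under reversal, negation and push via the defining functional equation of a flexion unit, and this is the only point at which $\fO$, rather than just $\fE$, genuinely enters.

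For the second stage, granting $\Enegpush(B)=B$, I would establish the operator identity $\Eter = \push\circ\mantar\circ\Eter\circ\mantar$ on such $B$. The idea is to expand $\Eter$ into its constituent flexion operations --- the $\fE$-twisting together with the reversal and permutation operators it is assembled from --- expand the right-hand side similarly, and then push $\mantar$ through each factor using $\mantar^2=\id$, the (signed) anti-automorphism property of $\mantar$ for the relevant flexion products, and the conjugation rules relating $\mantar$, $\push$, $\neg$ and the $\fE$-twist. After this rearrangement the two sides of \eqref{eq:senary} should differ only by a single insertion of $\Enegpush$ applied to $B$, which the first stage removes. The name ``senary'' fits because, written out in full, this is a six-term relation among $\fE$-twisted reversals and pushes --- the flexion counterpart of a hexagonal identity.

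I expect the main obstacle to be that last rearrangement. Unlike $\neg$, $\anti$ and $\push$, the $\fE$-twisted operators do not commute in any naive way: each time $\mantar$ is moved past a twisted factor it produces extra flexion weights built from $\fE$ evaluated on subsequences of the variables. Showing that all of these corrections telescope, so that the net discrepancy between the two sides of \eqref{eq:senary} is exactly one copy of $\Enegpush$ acting on $B$, is where the real computation lies, and it should rely on the unit relation for $\fE$ together with its conjugation behaviour. This is also the structural reason why \eqref{eq:senary} does not hold on all of $\ARI$ and needs the symmetry-constrained subspace: without $\Enegpush$-invariance the leftover term genuinely survives.
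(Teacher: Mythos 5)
There is a genuine gap, and it sits exactly at the junction between your two stages. Your first stage delivers only $\Enegpush$-invariance of $B$, but the structural identity that stage two actually needs is controlled by $\Epush=\Eneg\circ\Enegpush$, not by $\Enegpush$ alone. The correct form of the discrepancy, proved in the paper as Theorem \ref{thm:357}, is
\[(\id-\Esena)(B)=\swamu(\fes,(\id-\Epush)(B))\qquad\text{for \emph{all} }B\in\ARI,\]
so the senary relation is equivalent to $\Epush$-invariance (since $\swamu(\fes,\cdot)$ is injective), not to $\Enegpush$-invariance, and the leftover term is not ``one copy of $\Enegpush$ applied to $B$'' but $(\id-\Epush)(B)$ multiplied through $\swamu$ by $\fes$. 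With only $\Enegpush(B)=B$ in hand, this residual term does not vanish. The missing ingredient is the $\Eneg$-invariance of elements of $\ARI_{\baral/\barfol}$, and this is where the heavy machinery of the paper enters: one writes $\Eneg=\adari(\fess)\circ\neg\circ\adari(\fess)^{-1}$ and uses the dimorphic transport theorem, namely that $\adari(\fess)$ is a bijection $\ARI_{\baral/\baral}\to\ARI_{\baral/\barfol}$, so that $\adari(\fess)^{-1}(B)$ lies in $\ARI_{\baral/\baral}$ and is therefore $\neg$-invariant. That transport theorem in turn rests on the bisymmetrality of $\fess$ and on \'{E}calle's first and second fundamental identities --- none of which appear in your outline. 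Your stage one, by contrast, is essentially the paper's Proposition \ref{prop:enegpush} and is fine as far as it goes: $\fO$-alternality of $\swap(B)$ gives $\Omantar$-invariance of $\swap(B)$ and hence $\Enegpush(B)=\mantar(B)=B$.

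A secondary issue is that your stage two is claimed conditionally on the symmetry of $B$ (``granting $\Enegpush(B)=B$ \dots the corrections telescope''), whereas the identity relating $\Esena$ and $\Epush$ is unconditional on $\ARI$; the paper proves it by deriving closed formulas for $\Eter$, $\Eter^{-1}$, $\Epush^{-1}$ and $\Esena$ in terms of the products $\mmu$, $\swamu$, $\answamu$, and the cancellation is driven by two applications of the tripartite relation for the unit $\fO$, not by a telescoping of $\mantar$-commutators. You correctly anticipate that the unit relation is the engine, but the shape of the computation and, more importantly, the identity it must produce are different from what you describe, so as written the argument would not close.
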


Moreover, in the same framework, we show that there is a constructible bijection between the space of what enjoys the senary relation and much simpler one.

\begin{theorem}[$=$ Corollary \ref{cor:push_senary}]\label{thm:push_senary}
  For any flexion unit $\fE$, both the maps $\adari(\fess)$ and $\adari(\dess)$ gives an isomorphism $\ARI_{\push}\to\ARI_{\Esena}$ of $R$-modules.
\end{theorem}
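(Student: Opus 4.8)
The assertion is purely \emph{linear}: the source $\ARI_{\push}=\{B\in\ARI : \push(B)=B\}=\ker(\id-\push)$ and the target $\ARI_{\Esena}=\ker\bigl(\Eter-\push\circ\mantar\circ\Eter\circ\mantar\bigr)$ are $R$-submodules of $\ARI$, each cut out by the vanishing of a single $R$-linear operator, and $\adari(M)$ for a $\GARI$-element $M$ is an $R$-linear automorphism of $\ARI$ (with inverse the adjoint action of the $\gari$-inverse of $M$), a structural fact recalled from \cite{kawamura25}. So the proof reduces to establishing, for $M\in\{\fess,\dess\}$, a single operator identity of the shape
\begin{equation}
  \bigl(\Eter-\push\circ\mantar\circ\Eter\circ\mantar\bigr)\circ\adari(M)=\beta_M\circ(\id-\push)
\end{equation}
for some \emph{injective} $R$-linear $\beta_M$ on $\ARI$. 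Indeed, given such an identity, $B\in\ARI_{\push}$ forces $\adari(M)(B)\in\ARI_{\Esena}$; conversely, if $\adari(M)(B)\in\ARI_{\Esena}$ then $\beta_M\bigl((\id-\push)(B)\bigr)=0$, whence $(\id-\push)(B)=0$ by injectivity of $\beta_M$; and since $\adari(M)$ is a bijection of all of $\ARI$, its restriction is an $R$-module isomorphism $\ARI_{\push}\xrightarrow{\ \sim\ }\ARI_{\Esena}$.

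The plan for the identity is to reduce it to the transport of the three elementary operators $\mantar$, $\push$ and $\Eter$ across $\adari(\fess)$. First I would record the transport of $\mantar$ and $\push$: since $\mantar$ (built from $\neg$ and $\anti$) and $\push$ are compatible with the flexion structure, there are companion $\GARI$-elements with $\mantar\circ\adari(\fess)=\adari(\fess^{\mantar})\circ\mantar$ and $\push\circ\adari(\fess)=\adari(\fess^{\push})\circ\push$, and for the distinguished unit-exponential $\fess$ one computes these companions explicitly --- I expect $\fess^{\push}=\fess$, reflecting push-invariance of $\fess$ itself, and $\fess^{\mantar}$ to be a simple modification (plausibly related to $\dess$). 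The heart of the matter is the conjugation formula for the twisting operator, $\Eter\circ\adari(\fess)=\adari(\fess)\circ U_{\fE}$; one then feeds in the $\mantar$- and $\push$-transport so that the alternating combination $\Eter-\push\circ\mantar\circ\Eter\circ\mantar$, after conjugation by $\adari(\fess)$, telescopes down to an injective operator applied to $\id-\push$. I expect $U_{\fE}$ and this telescoping to be exactly what is computed --- in slightly different packaging --- in the course of proving Theorem~\ref{thm:357}: there one feeds a bimould that becomes push-invariant after $\adari(\fess)$ and deduces the senary relation, and here we merely make that correspondence explicit and invertible, then reverse it.

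For the $\dess$ case I would not repeat the flexion calculus. As $\dess$ is the conjugate (double-dotted) counterpart of $\fess$, obtained from it through the flexion-unit involution $\fE\leftrightarrow\fO$ (equivalently a $\mantar$/$\swap$-type operation), $\adari(\dess)$ should factor through $\adari(\fess)$: either $\adari(\dess)=\adari(\fess)\circ\adari(C)$ with $\adari(C)$ preserving $\ARI_{\push}$, or $\adari(\dess)=\adari(D)\circ\adari(\fess)$ with $\adari(D)$ preserving $\ARI_{\Esena}$. In either case the $\dess$-statement follows formally from the $\fess$-statement together with the stability of one of the two subspaces, so the only residual work is to pin down $C$ (or $D$) and check that stability.

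The main obstacle is the central conjugation identity $\Eter\circ\adari(\fess)=\adari(\fess)\circ U_{\fE}$ and the subsequent telescoping: this is the one place where genuine flexion-unit input enters, namely expanding the $\arit$/$\garit$-type operator $\Eter$ against $\adari(\fess)$ and invoking the defining relations of $\fE$ and its conjugate $\fO$ together with Theorem~\ref{thm:main}. Everything surrounding it --- the reduction to a kernel statement, the $R$-linearity and bijectivity bookkeeping, and the passage from $\fess$ to $\dess$ --- is formal.
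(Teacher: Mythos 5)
Your high-level reduction is the right one, and it matches the paper's skeleton: the paper indeed proves the statement by exhibiting an intertwining identity of exactly the shape you predict. Concretely, Theorem \ref{thm:357} gives $(\id-\Esena)=\swamu(\fes,\cdot)\circ(\id-\Epush)$ on all of $\ARI$ (so $\ARI_{\Esena}=\ARI_{\Epush}$), and Theorem \ref{thm:komiyamanote} gives
\[\swamu\bigl(\dess,((\id-\push^{-1})\circ\adari(\dess)^{-1})(B)\bigr)=\gari((\id-\Epush^{-1})(B),\dess),\]
with $\swamu(\dess,\cdot)$ and $\gari(\cdot,\dess)$ both injective and $R$-linear; the kernel argument you describe then closes the proof. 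So the ``bookkeeping'' half of your proposal is sound.

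The gap is that the entire substance of the proof lies in establishing that identity, and the specific route you sketch for it would not work. There is no companion $\GARI$-element with $\push\circ\adari(\fess)=\adari(\fess^{\push})\circ\push$, and $\fess$ is not $\push$-invariant; the interaction of $\push$ and $\swap$ with $\adari(\fess)$ is governed by the first and second fundamental identities and produces $\ganit(\foz)$-corrections, not a conjugation. Likewise no operator $U_{\fE}$ with $\Eter\circ\adari(\fess)=\adari(\fess)\circ U_{\fE}$ appears; what actually holds is the affine relation above, whose derivation occupies the whole of Theorem \ref{thm:komiyamanote} (first fundamental identity, $\irat$/$\preira$ manipulations, and Lemmas \ref{lem:irat_mantar}--\ref{lem:garit_pil}) and is not a consequence of Theorem \ref{thm:main} --- the relevant input is the unconditional operator identity of Theorem \ref{thm:357}, valid for \emph{all} $B\in\ARI$, not the senary relation for $\ARI_{\baral/\barfol}$. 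Finally, your proposed shortcut for the $\dess$ case is circular: writing $\adari(\dess)=\adari(\fess)\circ\adari(C)$ and asking that $\adari(C)$ preserve $\ARI_{\push}$ is equivalent to the pair of statements you are trying to prove; the paper instead reruns the same computation with $(\foss,\dess)$ replaced by $(\doss,\fess)$. As written, every step where flexion-unit input is needed is marked ``I expect,'' so the proposal is a plan whose central identity remains unproved and whose conjectured intermediate identities are false as stated.
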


\begin{corollary}
  For any flexion unit $\fE$, the module $\ARI_{\Esena}$ is a Lie subalgebra of $\ARI$.
\end{corollary}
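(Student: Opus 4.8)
The plan is to read the corollary off Theorem~\ref{thm:push_senary}, once we upgrade the statement there: $\adari(\fess)$ is not merely an $R$-module isomorphism but an automorphism of the Lie algebra $(\ARI,\ari)$. Indeed $\fess$ lies in $\GARI$, and $\adari$ is (a form of) the adjoint action of the group $\GARI$ on its Lie algebra $\ARI$, so it takes values in Lie algebra automorphisms: $\adari(S)\bigl(\ari(A,B)\bigr)=\ari\bigl(\adari(S)(A),\adari(S)(B)\bigr)$ for every $S\in\GARI$ and all $A,B\in\ARI$. (The same remark applies verbatim to $\dess$, giving an alternative route.)

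The second input I would record is that the source $\ARI_{\push}$ is itself a Lie subalgebra of $(\ARI,\ari)$, i.e.\ that $\push$-invariance is inherited by $\ari$-brackets. This is the only point in the argument that is not purely formal: it rests on the compatibility of the symmetry $\push$ with the flexion bracket, which is available in the literature (\'Ecalle~\cite{ecalle11}, and cf.~\cite{kawamura25}); if one does not wish to cite it as a black box, it can be re-derived from the explicit description of how $\push$ acts on $\arit$.

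Granting these two facts the proof is immediate. By Theorem~\ref{thm:push_senary} we have an equality of $R$-modules $\ARI_{\Esena}=\adari(\fess)\bigl(\ARI_{\push}\bigr)$. The image of a Lie subalgebra under a Lie algebra automorphism is again a Lie subalgebra, so $\ARI_{\Esena}$ is a Lie subalgebra of $\ARI$; concretely, for $A,B\in\ARI_{\Esena}$ write $A=\adari(\fess)(A')$ and $B=\adari(\fess)(B')$ with $A',B'\in\ARI_{\push}$, whence $\ari(A,B)=\adari(\fess)\bigl(\ari(A',B')\bigr)$ and $\ari(A',B')\in\ARI_{\push}$, so $\ari(A,B)\in\ARI_{\Esena}$. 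The main obstacle is thus entirely in the inputs — above all the closedness of $\ARI_{\push}$ under $\ari$, together with checking that $\fess\in\GARI$ so that $\adari(\fess)$ is a genuine Lie automorphism and not just a bijection — while the passage from $\ARI_{\push}$ to $\ARI_{\Esena}$ is formal.
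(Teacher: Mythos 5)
Your argument is correct and is essentially the one the paper intends: the corollary is stated right after Theorem \ref{thm:push_senary} precisely because $\ARI_{\Esena}=\adari(\fess)(\ARI_{\push})$, the operator $\adari(\fess)$ preserves the $\ari$-bracket (recorded in the paper's Section 2 as a consequence of the general theory of adjoint actions), and $\ARI_{\push}$ is a Lie subalgebra --- exactly mirroring how the paper deduces that $\ARI_{\baral/\barfol}$ is a Lie subalgebra from Theorem \ref{thm:dimorphic_transport} and Lemma \ref{lem:alal_lie}. Your identification of the closedness of $\ARI_{\push}$ under $\ari$ as the one non-formal input (standard in the literature, e.g.\ \cite{schneps15}) is also apt.
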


As a key ingredient of the proof of these claims, we also show the bisymmetrality of the secondary bimould, in the appendix.

\begin{theorem}[$=$ Theorem \ref{thm:bisymmetral2}; {\cite[p.~80]{ecalle11}}]\label{thm:bisymmetral}
  The bimould $\fess$ is bisymmetral; namely, both $\fess$ and its swappee $\doss$ are symmetral.
\end{theorem}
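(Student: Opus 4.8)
The plan is to unwind the definition of $\fess$ and to verify directly the two shuffle relations that constitute bisymmetrality. Recall that a bimould $M$ with $M^{\emptyset}=1$ is symmetral when $M^{a}M^{b}$ equals the sum of $M^{c}$ over all shuffles $c$ of the sequences $a$ and $b$, equivalently when $M=\expari(N)$ for some $N\in\ARI_{\al}$; thus bisymmetrality of $\fess$ is the conjunction of ``$\fess$ symmetral'' and ``$\doss:=\swap(\fess)$ symmetral''. Of these I would establish the symmetrality of $\fess$ first, as the more direct: either one exhibits $\fess$ as $\expari$ of an explicitly alternal bimould and invokes the fact that $\expari$ sends $\ARI_{\al}$ into $\GARI_{\as}$, or one argues by induction on $\leng$, removing a letter and using that the $\gari$-multiplication and the elementary flexions out of which $\fess$ is assembled are compatible with deconcatenation. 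I would record the resulting deconcatenation identity for $\fess$, since it feeds the induction on the other side.

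The substance of the theorem is the symmetrality of $\doss=\swap(\fess)$, and it is here that the axioms defining $\fE$ as a flexion unit are genuinely used. The operator $\swap$ commutes neither with the elementary flexions nor with $\gari$-multiplication; instead it intertwines $\gari$ with the reversed product $\gira$ (and hence $\expari$ with its $\gira$-counterpart), and it exchanges $\fE$ with a normalization of its conjugate $\fO$. I would therefore push the entire recursion for $\fess$ through $\swap$, using these intertwining rules together with the commutation of $\swap$ with the elementary flexions, to obtain a closed description of $\doss$ assembled from $\gira$ and the elementary flexions applied to $\fO$ --- this should be the bimould the appendix denotes $\doss$. Symmetrality of $\doss$ is then proved by an induction on $\leng$ running parallel to the one for $\fess$: at length $r$, the shuffle relation for $\doss$ is the $\swap$-transform of that for $\fess$, and one checks that, once transported, it collapses onto the shuffle relations already established at smaller lengths.

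The hard part is precisely that collapse. When the $\swap$-transformed recursion is expanded, each inductive step yields a sum of flexion monomials in $\fE$ (equivalently in $\fO$) indexed by the ways of cutting the variable sequence, and this sum resums to the required product only because $\fE$ satisfies the defining quadratic relations of a flexion unit --- identities of the schematic shape $\fE\cdot\fE=\fE\cdot\fE+\fE\cdot\fE$ with flexed arguments --- together with the conjugation identity relating $\fE$ and $\fO$; for a generic length-one bimould the resummation fails. The real obstacle is thus bookkeeping: organizing the many flexion terms at each length. It helps to exploit the auxiliary involutions $\mantar$ and $\pari$ and the operator $\push$, under which $\fess$ and $\doss$ transform in a controlled way, to halve the number of cases; and, if convenient, to reformulate the goal on the Lie-algebra side as the statement that $\logari(\fess)$ is bialternal in the appropriate push-corrected sense, though this does not bypass the flexion identities. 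One might also hope to shortcut via uniqueness --- a bisymmetral bimould being determined by its length-one component --- but the existence half still requires the construction above, so the computation cannot be avoided.
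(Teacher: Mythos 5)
Your proposal is a roadmap rather than a proof, and the route it sketches is not the one the paper takes; more importantly, the step you yourself identify as ``the hard part'' is left entirely unperformed, and the mechanism you propose for it is doubtful. You suggest proving symmetrality of $\doss$ by induction on length, claiming that ``at length $r$, the shuffle relation for $\doss$ is the $\swap$-transform of that for $\fess$'' and that it ``collapses onto the shuffle relations already established at smaller lengths.'' But $\swap$ does not intertwine the symmetrality condition with itself --- that is exactly why bisymmetrality is a genuine dimorphic constraint and not a formal consequence of one-sided symmetrality. The shuffle relation for $\doss$ is an independent condition on $\fess$, not the image under $\swap$ of the relation already proved, so there is no a priori reason for the proposed collapse, and you give no computation showing that the flexion-unit identities produce it. Saying that the obstacle is ``bookkeeping'' and that ``the computation cannot be avoided'' without then doing the computation leaves the theorem unproved.

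The paper's actual argument avoids a direct attack on the swapped shuffle relations by passing through \'Ecalle's dilators. One takes the identity
\[(\der\circ\invgari)(\doss)=\preari(\invgari(\doss),\ganit(\foz)^{-1}(\dTo(\re^{-1})))\]
from the proof of the $\gantar$-invariance of $\doss$ in the prequel, and then uses two inputs: (i) Proposition \ref{prop:dilator_symmetral}, which shows that for bimoulds related by $\der(S)=\preari(S,D)$, symmetrality of $S$ is equivalent to alternality of the dilator $D$ (this is where the shuffle combinatorics is localized, via the Furusho--Komiyama identity for $\arit(B)(A)(\ba\sh\bb)$); and (ii) Theorem \ref{thm:sro_dimorphy}, the $\fO$-alternality of $\dTo(\re^{-1})$, whose proof is the substantive computation and ultimately reduces to the vanishing of the binomial sums $F^{r}_{k,l,h}$ in Lemma \ref{lem:negelon}. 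If you want to salvage your approach, you would need to either carry out the length-by-length resummation explicitly (and you will find that the terms do not close up under the tripartite relation alone in the naive way you describe), or adopt some linearizing device such as the dilator relation above that converts the quadratic symmetrality condition into a linear alternality condition on a single auxiliary bimould.
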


By specializing the flexion unit, these results have an application which connects double shuffle structure in number theory and Kashiwara--Vergne structure in Lie theory.
We exhibit an important one of recent results.

\begin{theorem}[{\cite[Theorem 1]{schneps25}, \cite[Theorem 0.19]{ef25}}]\label{thm:dmr_krv}
  There is an injection $\dmr_{0}\hookrightarrow\krv_{2}$ of Lie algebras.
\end{theorem}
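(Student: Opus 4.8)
The plan is to realize all three Lie algebras inside \'{E}calle's flexion framework and then chain together Theorems~\ref{thm:main}, \ref{thm:push_senary} and \ref{thm:bisymmetral}. Throughout, one fixes a single flexion unit $\fE$ suited to the double shuffle picture, together with its conjugate $\fO$.

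First I would recall the mould-theoretic description of the double shuffle Lie algebra $\dmr_{0}$ — due to \'{E}calle and made precise in \cite{kawamura25} — which takes the form of an injective morphism of Lie algebras $\iota\colon\dmr_{0}\hookrightarrow\ARI_{\baral/\barfol}$, the target being equipped with the $\ari$-bracket. Thus every element of $\dmr_{0}$ produces an alternal bimould whose $\fO$-twisted swappee is again alternal. By Theorem~\ref{thm:main}, every $B\in\ARI_{\baral/\barfol}$ satisfies the senary relation \eqref{eq:senary}, so $\iota(\dmr_{0})\subseteq\ARI_{\Esena}$; and since alternality is part of the defining conditions of $\ARI_{\baral/\barfol}$, the image in fact lies in the alternal part of $\ARI_{\Esena}$.

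Next I would pass to the linearized side by means of Theorem~\ref{thm:push_senary}: the map $\adari(\fess)\colon\ARI_{\push}\xrightarrow{\ \sim\ }\ARI_{\Esena}$ is an isomorphism, and, being $\gari$-conjugation by $\fess$, it is a Lie automorphism of $(\ARI,\ari)$; because $\fess$ is symmetral (Theorem~\ref{thm:bisymmetral}), it sends alternal bimoulds to alternal ones. Hence its inverse restricts to an injective morphism of Lie algebras from the alternal part of $\ARI_{\Esena}$ into the alternal part of $\ARI_{\push}$ — these are indeed Lie subalgebras of $\ARI$: the alternal bimoulds by a standard fact, $\ARI_{\Esena}$ by the corollary to Theorem~\ref{thm:push_senary}, and $\ARI_{\push}$ because the $\ari$-bracket preserves push-invariance. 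It then remains to invoke the mould-theoretic description of the Kashiwara--Vergne Lie algebra (\'{E}calle \cite{ecalle11}; see also \cite{schneps25,ef25}): the alternal part of $\ARI_{\push}$, with the $\ari$-bracket, is isomorphic as a Lie algebra to $\krv_{2}$ after the usual adjustment of the length-one part. Composing,
\[
  \dmr_{0}\;\hookrightarrow\;\ARI_{\baral/\barfol}\;\subseteq\;\ARI_{\Esena}\;\xrightarrow{\ \adari(\fess)^{-1}\ }\;\ARI_{\push}\;\cong\;\krv_{2},
\]
one gets $\dmr_{0}\hookrightarrow\krv_{2}$ as a composite of injective morphisms of Lie algebras.

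The step I expect to be the main obstacle is not any of Theorems~\ref{thm:main}, \ref{thm:push_senary} and \ref{thm:bisymmetral}, which supply the genuinely new input, but the bookkeeping at the two ends of the chain: one must pin down a single flexion unit $\fE$ (and keep track of the conjugate $\fO$) for which the dictionary of \cite{kawamura25} on the double shuffle side and the dictionary on the Kashiwara--Vergne side are simultaneously valid with matching normalizations, and one must treat the length-one parts separately, since the two dictionaries handle that component in slightly different ways. Once those conventions are aligned, the proof is exactly the concatenation displayed above.
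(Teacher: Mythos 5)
Your chain breaks at its last link, and it is worth noting first that the paper does not construct any such chain: its proof of Theorem \ref{thm:dmr_krv} is a pure citation argument. Schneps \cite[Theorem 1.1]{schneps12} already builds the injection $\dmr_{0}\hookrightarrow\krv_{2}$ \emph{conditionally} on a special (polar) case of the senary relation \eqref{eq:senary}, and the paper simply observes that Theorem \ref{thm:main} supplies that missing hypothesis. The senary relation enters her argument as the technical lemma needed to verify the Kashiwara--Vergne conditions for the derivation attached to $ma_{f}$ itself; it is not used to transport $\dmr_{0}$ through $\ARI_{\Esena}$ into $\ARI_{\push}$.

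The concrete gap in your proposal is the final identification ``alternal part of $\ARI_{\push}\cong\krv_{2}$''. Push-invariant alternal moulds correspond (after the length-one adjustments) to the Lie algebra $\mathfrak{sder}_{2}$ of \emph{special} derivations, of which $\krv_{2}$ is a proper subalgebra cut out by the divergence cocycle condition --- in mould language, an additional circ-neutrality--type constraint. Indeed, your first three links are sound but amount exactly to Theorem \ref{thm:dimorphic_transport}: $\adari(\fess)^{-1}$ carries $\ARI_{\baral/\barfol}$ onto $\ARI_{\baral/\baral}$, whose elements are push-invariant by Lemma \ref{lem:negpush}. So what your composite proves is the dimorphic transport $\dmr_{0}\hookrightarrow\ARI_{\baral/\baral}\subseteq\ARI_{\push}\cap\ARI_{\al}\cong\mathfrak{sder}_{2}$, i.e.\ an injection into special derivations; the actual content of the theorem --- that the image satisfies the divergence condition and hence lies in $\krv_{2}$ --- is never established. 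That step is precisely where the senary relation is needed in the form the paper uses it (applied to $ma_{f}$ via Schneps' conditional argument), not in the form of the $\ARI_{\push}\to\ARI_{\Esena}$ isomorphism of Theorem \ref{thm:push_senary}.
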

More precisely, the above injection is already shown in \cite[Theorem 1.1]{schneps12} assuming a special case of the senary relation \eqref{eq:senary}.
Since our result (Theorem \ref{thm:main}) includes that assumption, it also gives a new proof of Theorem \ref{thm:dmr_krv}.

\section*{Acknowledgments}
The author is grateful to Professor Henrik Bachmann, Professor Hidekazu Furusho and Professor Minoru Hirose for giving him opportunity to give a talk in Kagoshima mould theory seminar and its preparation seminar. This work is done in the preparation for those talks.
The author also would like to thank Atsushi Matsuura and his ChatGPT for giving him a crucial idea to prove Lemma \ref{lem:negelon}.

\section{Preliminaries}
Throughout this paper, we use the notations in \cite{kawamura25} without declaring.
We fix a deck $(\bbK,\cX)$ for bimoulds, and a $\bbK$-algebra $R$, so that we can omit $R$ from several notations. A flexion unit $\fE$ is also fixed unless mentioned, and denote by $\fO$ its conjugate unit.\\

\subsection{Adjoint action $\adari$}
First we define the adjoint action of $\GARI$ on $\ARI$:
\begin{definition}
  For each $M\in\GARI$, we define $\adari(M)\colon\ARI\to\ARI$ by
  \[A\mapsto A+\sum_{n=1}^{\infty}\frac{1}{n!}\underbrace{\ari(\logari(M),\cdots,\ari(\logari(M)}_{n},A)\cdots).\]
\end{definition}
This is well-defined since the $\ari$-bracket preserves the length filtration in the sense that, for any $A\in\Fil^{\ge m}\BIMU$ and $B\in\Fil^{\ge n}\BIMU$, it holds that $\ari(A,B)\in\Fil^{\ge m+n}\BIMU$.\\

From the general theory of adjoint actions, we see that
\begin{align}
  \adari(\gari(M,N))(A)&=(\adari(M)\circ\adari(N))(A),\\
  \adari(M)(\ari(A,B))&=\ari(\adari(M)(A),\adari(M)(B))
\end{align}
and
\[\adari(M)(A)=\gari(\preari(M,A),\invgari(M))\]
hold for every $M,N\in\GARI$ and $A,B\in\ARI$.
Note that, the second expression above relies on the fact that $\preari$ gives the linearization of $\gari$ \cite[Remark 3.16]{kawamura25}.

\subsection{Twisted symmetries}
Recall that a bimould $A\in\ARI$ is said to be \emph{alternal} if $A(\ba\sh\bb)=0$ holds for arbitrary non-empty sequences $\ba$ and $\bb$.
\begin{definition}[{\cite[\S 3.4]{ecalle11}}]
  Let $A\in\ARI$. 
  We say that $A$ is \emph{$\fO$-alternal} if $\ganit(\foz)^{-1}(A)$ is alternal.
\end{definition}

\begin{remark}
  Komiyama suggested, in his personal note, an alternative definition of the $\fO$-alternality via introducing a recursive product similar to the quasi-shuffle product.
\end{remark}

\begin{lemma}\label{lem:garit_os}
  We have
  \[\ganit(\fos)\circ\gamit(\fos)^{-1}=\garit(\invmu(\fos)).\]
  Especially, this operator preserves symmetrality.
\end{lemma}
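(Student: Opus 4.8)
The plan is to peel the identity down to the explicit flexion formulas for $\gamit$, $\ganit$, $\garit$ recorded in \cite{kawamura25}, and then read off the symmetrality assertion. The first reduction is cosmetic: since $\gamit(\cdot)$ converts the group law $\gami$ into composition of operators, $\gamit(\fos)^{-1}=\gamit(\invgami(\fos))$, so the content of the lemma is the identity $\ganit(\fos)\circ\gamit(\invgami(\fos))=\garit(\invmu(\fos))$, which intertwines the three flexion products $\gami$, $\gani$, $\gari$ with the concatenation product $\mmu$. The structural input I would use is the description of the two-sided translation in terms of the one-sided ones together with $\mmu$: on one side $\garit(S)(A)=\mmu\bigl(\gari(A,S),\invmu(S)\bigr)$, and on the other the relation (the $\gira$/$\girat$ identities of \cite{kawamura25}) expressing $\gari$ through the pair $(\gami,\gani)$. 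Feeding $S=\invmu(\fos)$ into the first formula and simplifying the $\mmu$-factors should turn the right-hand side into $\ganit(\invmu(\fos))$ post-composed with a single $\gamit$-operator, which is then matched against the left-hand side.

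The heart of the matter is a term-by-term comparison. Evaluated on a bimould $A$ at a word $w=(w_{1},\dots,w_{r})$, the operator $\gamit(\fos)^{-1}$ expands as a sum over factorisations of $w$ into consecutive chunks, with $\invmu(\fos)$ inserted into the ``upper'' flexion slots; post-composing with $\ganit(\fos)$ adds a second layer of chunking carrying $\fos$ in the ``lower'' slots. I would show that this double sum collapses to the single sum defining $\garit(\invmu(\fos))(A)$: each two-sided flexion term on the right arises from exactly one way of splitting a two-sided cut of $w$ into its upper part (the $\gamit^{-1}$-contribution) and its lower part (the $\ganit$-contribution), and the associativity of the flexion operations $\lceil\ \rceil$, $\lfloor\ \rfloor$ under nesting is precisely what makes the inserted arguments agree. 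The emergence of a single $\invmu(\fos)$ on the nested flexions, rather than a separate $\fos$ and $\invgami(\fos)$, is the shadow of the cancellation built into the $\gari$--$(\gami,\gani)$ relation.

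The step I expect to be the real obstacle is exactly this bookkeeping: tracking two interacting families of cuts and verifying that no ``cross terms'' survive when the flexions of the inner cut distort the values plugged in by the outer one. I think the cleanest way to avoid a brute-force expansion is to route the whole argument through the $\gari$-translation description above, so that the only genuinely combinatorial identity one must invoke is the $\gari$--$(\gami,\gani)$ relation already available in \cite{kawamura25}; everything else is manipulation of $\mmu$-products. As a sanity check at each stage, linearising the identity at the unit of $\GARI$ should reproduce a known decomposition of the derivation $\arit$ in terms of $\amit$ and $\anit$, where any sign or side error would surface first.

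Finally, ``preserves symmetrality'' is immediate once the identity is in hand. The bimould $\fos$ is symmetral (cf.\ \cite{kawamura25}), hence so is $\invmu(\fos)$ since the symmetral bimoulds form a group under $\mmu$; and $\garit(S)$ preserves symmetrality whenever $S$ is symmetral, because $\garit(S)(A)=\mmu\bigl(\gari(A,S),\invmu(S)\bigr)$ and $\gari$ of two symmetral bimoulds is symmetral. Therefore $\ganit(\fos)\circ\gamit(\fos)^{-1}=\garit(\invmu(\fos))$ maps symmetral bimoulds to symmetral bimoulds, as claimed.
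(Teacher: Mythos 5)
Your overall skeleton --- rewrite $\gamit(\fos)^{-1}$ as $\gamit(\invgami(\fos))$, merge the two one-sided operators into a single two-sided one, and identify the result with $\garit(\invmu(\fos))$ --- matches the paper's, and your symmetrality argument at the end is correct and essentially identical to the paper's (via $\garit(\invmu(\fos))(S)=\mmu(\gari(S,\invmu(\fos)),\fos)$, the symmetrality of $\fos$, and preservation of symmetrality by $\gari$ and $\mmu$). But the middle of your proposal describes what must be verified rather than verifying it, and the part you yourself flag as ``the real obstacle'' is exactly where the content lies. Concretely, two computations are needed and neither appears in your write-up: (i) $\invgami(\fos)=\pari(\foz)$, which the paper derives from $\invgani(\foz)=(\pari\circ\anti)(\fos)$ together with $\gamit(\anti(X))\circ\anti=\anti\circ\ganit(X)$; and (ii) $\ganit(\fos)(\fO)=\fos-1$, a direct consequence of the telescoping product formula for $\fos$, which upgrades (because $\ganit(\fos)$ is an $\mmu$-algebra morphism and $\pari(\foz)=\invmu(1+\fO)$) to $\ganit(\fos)(\pari(\foz))=\invmu(\fos)$. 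With these in hand, the $\gaxi$-composition law gives $\ganit(\fos)\circ\gamit(\fos)^{-1}=\gaxit(\gaxi((\pari(\foz),1),(1,\fos)))=\gaxit(\invmu(\fos),\fos)=\garit(\invmu(\fos))$ with no term-by-term bookkeeping at all. What your sketch glosses over is that the composite is a priori only some $\gaxit(X,Y)$ with two independent arguments; the fact that $X$ and $Y$ emerge as $\mmu$-inverses of one another (so that the operator is a genuine $\garit$) is not generic combinatorics of cuts but a special property of the conjugate pair $(\fos,\foz)$, encoded precisely in (i) and (ii). Asserting that ``each two-sided flexion term arises from exactly one splitting'' does not by itself produce the bimould $\invmu(\fos)$.

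A secondary concern: your proposed shortcut of routing everything through $\garit(S)(A)=\mmu(\gari(A,S),\invmu(S))$ is circular for the main identity, since $\gari(A,S)$ is itself defined through $\garit(S)(A)$; that formula is the right tool for the symmetrality claim (and is how the paper uses it), but it gives no independent handle on expanding the left-hand side.
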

\begin{proof}
  From the explicit formula
  \[\fos\binom{u_{1},\ldots,u_{r}}{v_{1},\ldots,v_{r}}=\fO\binom{u_{1}}{v_{1}}\fO\binom{u_{1}+u_{2}}{v_{2}-v_{1}}\cdots\fO\binom{u_{1}+\cdots+u_{r}}{v_{r}-v_{r-1}},\]
  we have
  \begin{align}
    \ganit(\fos)(\fO)(w_{1},\ldots,w_{r})
    &=\fO(w_{1}\fur{w_{2}\cdots w_{r}})\fos(\fll{w_{1}}(w_{2}\cdots w_{r}))\\
    &=\fos(w_{1},\ldots,w_{r})
  \end{align}
  unless $r=0$. Hence we have
  \begin{equation}\label{eq:ganit_os}
  \ganit(\fos)(\fO)=\fos-1
  \end{equation}
  On the other hand, we know $\invgami(\fos)=\pari(\foz)$ because $\anti\circ\invgami=\invgani\circ\anti$ and $\invgani(\foz)=(\pari\circ\anti)(\fos)$ (\cite[Proposition 5.3 (1)]{kawamura25}), while the former follows from the identity $\gamit(\anti(X))\circ\anti=\anti\circ\ganit(X)$.
  Then, using \cite[Proposition 3.10]{kawamura25},
  we obtain
  \begin{align}
    &\ganit(\fos)\circ\gamit(\fos)^{-1}\\
    &=\ganit(\fos)\circ\gamit(\pari(\foz))\\
    &=\gaxit((1,\fos))\circ\gaxit((\pari(\foz),1))\\
    &=\gaxit(\gaxi((\pari(\foz),1),(1,\fos)))\\
    &=\gaxit(\ganit(\fos)(\pari(\foz)),\fos).
  \end{align}
  Hence it suffices to show $\ganit(\fos)(\pari(\foz))=\invmu(\fos)$.
  This equality is shown as
  \begin{align}
    \ganit(\fos)(\pari(\foz))
    &=\ganit(\fos)(\invmu(1+\fO))\\
    &=(\invmu\circ\ganit(\fos))(1+\fO)\\
    &=\invmu(1+\ganit(\fos)(\fO))\\
    &=\invmu(\fos),
  \end{align}
  where the final equality is due to \eqref{eq:ganit_os}.
  To show the rest assertion, we take an arbitrary $S\in\GARI_{\as}$.
  Then we have
  \[\garit(\invmu(\fos))(S)=\mmu(\gari(S,\invmu(\fos)),\fos),\]
  from the definition of $\gari$.
  Since $\fos$ is symmetral (\cite[Proposition 5.3 (2)]{kawamura25}) and both $\gari$ and $\mmu$ preserves symmetrality, we obtain the result.
\end{proof}

The following fact is mentioned in the 43rd footnote of \cite{ecalle11} without a proof.
\begin{lemma}
  A bimould $A\in\ARI$ is $\fO$-alternal if and only if $\gamit(\foz)^{-1}(A)$ is alternal.
\end{lemma}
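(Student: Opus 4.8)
The plan is to reduce the statement about $\gamit(\foz)^{-1}$ to the defining statement about $\ganit(\foz)^{-1}$ by comparing the two operators $\ganit(\fos)$ and $\gamit(\fos)$, which is exactly the content of Lemma \ref{lem:garit_os}. Concretely, I would first recall that $A$ being $\fO$-alternal means $\ganit(\foz)^{-1}(A)$ is alternal, and observe that $\foz$ is the swappee of $\fos$, so the operators $\gami$, $\gani$, $\gamit$, $\ganit$ attached to $\foz$ versus $\fos$ are related through $\swap$ in the usual way. The goal is therefore to show that $\ganit(\foz)^{-1}(A)$ is alternal if and only if $\gamit(\foz)^{-1}(A)$ is alternal.

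The key step is to write $\gamit(\foz)^{-1}(A) = \bigl(\gamit(\foz)^{-1}\circ\ganit(\foz)\bigr)\bigl(\ganit(\foz)^{-1}(A)\bigr)$, so it suffices to prove that the operator $\gamit(\foz)^{-1}\circ\ganit(\foz)$ preserves alternality. Applying $\swap$ (and using the compatibility of $\swap$ with the $\gamit$/$\ganit$ operators, together with the fact that $\swap$ interchanges alternality with the corresponding symmetry on the swap side), this is equivalent to showing that the analogous operator on the $\fos$ side, namely $\ganit(\fos)\circ\gamit(\fos)^{-1}$, preserves symmetrality — which is precisely the ``especially'' clause of Lemma \ref{lem:garit_os}, where it was identified with $\garit(\invmu(\fos))$ and shown to preserve symmetrality because $\fos$ is symmetral and $\mmu$, $\gari$ do too. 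I would then transport this back through $\swap$ to conclude that $\gamit(\foz)^{-1}\circ\ganit(\foz)$ preserves alternality, and hence that $\gamit(\foz)^{-1}(A)$ is alternal whenever $A$ is $\fO$-alternal. The converse direction is symmetric, using that $\ganit(\foz)^{-1}\circ\gamit(\foz)$ is the inverse operator and applying Lemma \ref{lem:garit_os} to $\fos$ again (or simply inverting, since $\garit(\invmu(\fos))$ is invertible with inverse of the same shape).

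The main obstacle I anticipate is bookkeeping the precise $\swap$-compatibilities: one must be careful that $\swap$ really does conjugate $\ganit(\foz)$ into (something built from) $\ganit(\fos)$ and $\gamit(\foz)$ into $\gamit(\fos)$, with the flexion units $\fE$ and $\fO$ trading places correctly, and that alternal $\leftrightarrow$ symmetral under $\swap$ holds in the form needed here (for the operator, not just for individual bimoulds). Once those dictionary entries are in place — all of which are available from \cite{kawamura25} and the identities recalled in the proof of Lemma \ref{lem:garit_os} — the argument is a short formal manipulation, and no genuinely new computation is required beyond what Lemma \ref{lem:garit_os} already supplies.
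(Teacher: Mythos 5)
Your overall architecture is the right one --- reduce the statement to showing that the operator connecting $\ganit(\foz)^{-1}$ and $\gamit(\foz)^{-1}$ preserves alternality, and feed that into Lemma \ref{lem:garit_os} --- but the bridge you build between the $\foz$-side and the $\fos$-side, and between alternality and symmetrality, does not hold as stated, and this is where the paper does real work. First, $\foz$ and $\fos$ are \emph{not} related by $\swap$: the swappee of $\foz$ is $\fes$, while $\fos$ is tied to $\foz$ through $\invgani(\foz)=(\pari\circ\anti)(\fos)$ and $\ganit(\foz)^{-1}(\foz)=\fos$. So conjugating $\gamit(\foz)^{-1}\circ\ganit(\foz)$ by $\swap$ will not produce $\ganit(\fos)\circ\gamit(\fos)^{-1}$. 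The identification you need is obtained instead from the left--right separation of $\gaxit$ (the identity $\gaxit(X,Y)=\gamit(X)\circ\ganit(\gamit(X)^{-1}(Y))=\ganit(Y)\circ\gamit(\ganit(Y)^{-1}(X))$ applied with $X=Y=\foz$), which yields $\ganit(\foz)^{-1}=\gamit(\fos)\circ\ganit(\fos)^{-1}\circ\gamit(\foz)^{-1}$; this is how the paper lands on the operator $\gamit(\fos)\circ\ganit(\fos)^{-1}=\garit(\invmu(\fos))^{-1}$ without any swap.

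Second, and more seriously, $\swap$ does not interchange alternality with symmetrality: those two symmetries live at the Lie-algebra and group levels respectively, and the passage between them goes through the exponential/logarithm, not through $\swap$. So ``$\garit(\invmu(\fos))^{\pm1}$ preserves symmetrality'' does not become ``it preserves alternality'' by applying $\swap$. The paper's step here is a linearization argument: a map $\MU_{\as}\to\MU_{\as}$ induces its derivative $\LU_{\al}\to\LU_{\al}$, and since $\garit(\bullet)$ is linear that derivative is the operator itself, whence alternality is preserved. With these two substitutions --- the $\gaxit$ separation in place of swap-conjugation of the units, and the linearization in place of swap exchanging the two symmetries --- your outline closes up and coincides with the paper's proof; as written, however, both hinges of the argument rest on false dictionary entries.
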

\begin{proof}
  For any bimould $B$, one deduces
  \[\ganit(\foz)^{-1}(A) = (\gamit(\fos)\circ\ganit(\fos)^{-1}\circ\gamit(\foz)^{-1})(A)\]
  from the explicit formula \cite[Corollary 3.11]{kawamura25} stating
  \[\gaxit(X,Y)=\gamit(X)\circ\ganit(\gamit(X)^{-1}(Y))=\ganit(Y)\circ\gamit(\ganit(Y)^{-1}(X)).\]
  Hence it is sufficient to show that the operator $\gamit(\fos)\circ\ganit(\fos)^{-1}$ preserves alternality.
  Indeed it is $\garit(\invmu(\fos))^{-1}$ and preserves symmetrality, according to Lemma \ref{lem:garit_os}.
  Hence we conclude that $\garit(\invmu(\fos))^{-1}\colon\MU_{\as}\to\MU_{\as}$ induces its derivative $\LU_{\al}\to\LU_{\al}$, which is $\garit(\invmu(\fos))^{-1}$ itself as $\garit(\bullet)$ is always linear.
\end{proof}

\section{Dimorphic transportation}
In this section, we describe \'{E}calle's dimorphic transportation via $\adari(\fess)$ or $\adari(\dess)$.
Our argument proceeds similarly to Schneps' one \cite{schneps15}, but it works for any flexion unit, not only polar ones.
The following formula is an alternative form of \'{E}calle's \emph{first fundamental identity}, which is shown by Schneps.

\begin{theorem}[{\cite[Corollary 2.8.6]{schneps15}}]\label{thm:first}
    For $A,B\in\GARI$, we have
    \begin{equation}\label{eq:first_fundamental}
      \swap(\fragari(\swap(A),\swap(B)))=\ganit(\crash(B))(\fragari(A,B)).
    \end{equation}
\end{theorem}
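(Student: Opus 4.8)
The identity \eqref{eq:first_fundamental} is \'{E}calle's first fundamental relation between $\gari$ and $\swap$: its left-hand side is the $\swap$-transpose of $\fragari$, and the assertion is that this transpose equals $\fragari(A,B)$ twisted by the one-sided flexion action $\ganit(\crash(B))$. I would prove it by the standard ``infinitesimal, then integrate'' scheme. Since $\expari\colon\ARI\to\GARI$ is a bijection, write $B=\expari(b)$ with $b\in\ARI$; for $b=0$ the identity is trivial, since $\crash(1)=1$ and hence $\ganit(\crash(1))=\id$, so both sides equal $A$. It therefore remains to establish the first-order shadow of \eqref{eq:first_fundamental} along $B=\expari(tb)$; the full identity then follows by induction on length, equivalently by the functoriality of \'{E}calle's exponential correspondence $\expari\colon(\ARI,\preari)\leadsto(\GARI,\gari)$ (and the parallel one $\anit\leadsto\ganit$).

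Differentiating \eqref{eq:first_fundamental} along $B=\expari(tb)$ at $t=0$, and using that $\preari$ and $\anit$ are the linearizations of $\gari$ and $\ganit$ while $\crash(\expari(tb))=1+tc_{b}+O(t^{2})$ for some linear map $b\mapsto c_{b}$, the statement reduces to the single identity in $\ARI$, valid for all $A\in\GARI$ and $b\in\ARI$,
\[
  \swap\bigl(\preari(\swap(A),\swap(b))\bigr)=\preari(A,b)-\anit(c_{b})(A).
\]
Because $\preari(A,b)=\arit(b)(A)+\mmu(A,b)$ and the $\swap$-transposes of $\arit$ and of $\mmu$ are themselves expressible through one-sided amputations, this is a statement purely about the elementary flexion operators.

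To prove it I would unwind $\arit(b)(A)$, evaluated on a sequence $w=(w_{1},\dots,w_{r})$, as the alternating sum over the ways of amputating a consecutive block from $w$ --- the two neighbours of the block being folded onto it by the flexion amputations $\fll{\cdot}$ and $\fur{\cdot}$ --- then apply $\swap$ to each summand and rewrite it by the ``swap-transpose'' rules for those amputations, which exchange partial sums of the $u$-variables with differences of the $v$-variables and so turn an amputation from the interior of $w$ into one from a boundary. After collecting, the summands in which the $b$-factor and the $A$-factor do not interact reassemble into $\preari(A,b)$ read off in the transposed variables, while the boundary summands combine into exactly one application of $\anit$ to $A$, with argument the linearized contraction $c_{b}$. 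Integrating back up the length filtration (equivalently, exponentiating through the correspondences above) then promotes $c_{b}$ to $\crash(B)$ and delivers \eqref{eq:first_fundamental}.

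The main obstacle is this flexion computation, and within it the sharp point is not that \emph{some} correction appears when $\swap$ is pushed through $\gari$, but that it is precisely a $\ganit$-action and that the mould effecting it is $\crash(B)$: under $\swap$, the several $B$-factors of the $\gari$-product, which in the $u$-picture sit inside overlapping windows along $w$, are glued end-to-end in the $v$-picture, and it is exactly this gluing that $\crash$ records. Checking that no residual twist survives and that the correct operator is $\crash$ rather than a close relative of it --- such as $\swap(B)$ itself, or an $\anti$-twisted variant --- is the delicate part, and is the reason the clean formulation \eqref{eq:first_fundamental} is due to Schneps rather than being transparent in \'{E}calle's original statement.
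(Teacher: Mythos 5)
The paper does not prove this statement at all: it is imported verbatim as \cite[Corollary~2.8.6]{schneps15}, so there is no in-paper argument to compare against. Judged on its own terms, your proposal is a strategy outline rather than a proof, and each of its three stages has a genuine gap. First, the infinitesimal identity you reduce to is never verified: the ``linearized contraction'' $c_{b}$ with $\crash(\expari(tb))=1+tc_{b}+O(t^{2})$ is left undefined, and the flexion computation that is supposed to show $\swap(\preari(\swap(A),\swap(b)))=\preari(A,b)-\anit(c_{b})(A)$ --- which you yourself identify as ``the main obstacle'' and ``the delicate part'' --- is only described (``I would unwind\dots after collecting\dots''), not carried out. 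Without an explicit formula for $c_{b}$ and the bookkeeping of how the amputations $\fll{\cdot},\fur{\cdot}$ transform under $\swap$, the claim that the boundary terms assemble into exactly one $\anit$-action is unsubstantiated.

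Second, the integration step is not automatic. Knowing the $t$-derivative of both sides at $t=0$ only pins down the length-one (in $b$) part; to conclude for all of $B=\expari(b)$ you need the derivative identity at general $t$, which requires a compatibility of $\crash$ with the group law --- something of the shape $\crash(\gari(B_{1},B_{2}))=\gani(\text{transported }\crash(B_{1}),\crash(B_{2}))$ together with the homomorphism property of $\ganit$ under $\gani$ --- so that the right-hand side satisfies the same first-order ODE in $t$ as the left-hand side. This cocycle property of $\crash$ is itself a nontrivial lemma and is nowhere addressed; appealing to ``functoriality of the exponential correspondence'' does not supply it. (Also, $\anit$ is not literally the linearization of $\ganit$; the relation $\ganit(1+\epsilon c)=\id+\epsilon(\text{operator})+O(\epsilon^{2})$ needs to be made precise before it can be used.) As it stands the proposal is a plausible roadmap in the spirit of Schneps' proof, but it is not yet a proof.
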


Using this formula, we derive the following \emph{second fundamental identity} in the general setting. Denote by $\ARI_{\push}$ the set of all $\push$-invariants in $\ARI$.

\begin{corollary}[{\cite[(5.17)]{ecalle11}}]\label{cor:second}
  For $A\in\ARI_{\push}$, we have
  \[(\swap\circ\adari(\fess))(A)=(\ganit(\foz)\circ\adari(\dess)\circ\swap)(A)\]
  and
  \[(\swap\circ\adari(\dess))(A)=(\ganit(\foz)\circ\adari(\foss)\circ\swap)(A).\]
\end{corollary}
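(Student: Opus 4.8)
The plan is to derive both identities as special cases of the first fundamental identity \eqref{eq:first_fundamental}, combined with the relation between $\adari$ and $\fragari$. First I would recall that for $M\in\GARI$ and $A\in\ARI$ one has $\adari(M)(A)=\gari(\preari(M,A),\invgari(M))$; the task is to recognize the right-hand side as a ``fragment'' $\fragari(\cdot,\cdot)$ so that Theorem \ref{thm:first} applies. Concretely, I expect $\fragari(A,M)$ to be defined so that $\gari(A,M)=\mmu(\fragari(A,M),\ldots)$ or, more to the point, so that $\adari(M)(A)$ coincides with $\fragari(\exp_{\preari}(A)\text{-type expression},M)$ after matching group-like completions; the footnote-level identity we need is that applying $\fragari(-,\fess)$ to the grouplike exponential of $A$ recovers $\adari(\fess)(A)$ on the Lie level. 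Granting this translation, the two asserted formulas are exactly \eqref{eq:first_fundamental} applied with $B=\fess$ and $B=\dess$ respectively, once we know $\swap(\fess)=\doss$ (which is Theorem \ref{thm:bisymmetral}, the bisymmetrality of $\fess$) and $\swap(\dess)=\foss$, together with the computation of $\crash(\fess)$ and $\crash(\dess)$.

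The key computational input is therefore the evaluation $\crash(\fess)=\foz$ (so that $\ganit(\crash(\fess))=\ganit(\foz)$) and similarly $\crash(\dess)=\foz$; these should follow from the explicit formulas for the secondary bimould $\fess$ and its conjugate, since $\crash$ only sees the top-length behavior in a way governed by the flexion unit, and the conjugate unit $\fO$ is precisely what appears. I would carry out the steps in this order: (1) state and justify the dictionary $\adari(M)(A)=\fragari(\text{grouplike}(A),M)$ restricted to $\ARI_{\push}$, citing \cite{schneps15} for the $\push$-invariance hypothesis that makes $\fragari$ behave well under $\swap$; (2) record $\swap(\fess)=\doss$, $\swap(\dess)=\foss$, $\crash(\fess)=\crash(\dess)=\foz$; (3) substitute into \eqref{eq:first_fundamental} and read off both identities; (4) pass from the $\GARI$-level statement back down to $\ARI$ by taking the Lie-algebra part, using that $\swap$, $\ganit(\foz)$ and $\adari$ are all compatible with the exp/log correspondence between $\GARI$ and $\ARI$.

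The main obstacle I anticipate is step (1): the passage from the group identity \eqref{eq:first_fundamental}, which is phrased for $A,B\in\GARI$, to a statement about $\adari$ acting on the Lie algebra $\ARI$. One must be careful that $\fragari(\swap(A),\swap(B))$ genuinely linearizes to $\swap\circ\adari$ applied to the Lie element — this is where the hypothesis $A\in\ARI_{\push}$ is essential, because $\swap$ does not commute with $\gari$ in general and the $\push$-invariance is exactly what repairs this (cf. the role of $\push$ in \'Ecalle's and Schneps' treatment). A secondary subtlety is confirming $\crash(\dess)=\foz$ rather than something involving $\fez$; here I would lean on the appendix's description of $\dess=\swap(\dess)$... — more precisely on the explicit product formula for $\dess$ — to pin down its leading flexion behavior. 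Once these two points are settled the rest is formal substitution.
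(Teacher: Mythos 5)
Your high-level strategy is the same as the paper's: write $\adari(M)(A)$ as a $\fragari$, push $\swap$ through it via Theorem \ref{thm:first}, and evaluate the resulting $\crash$ term as $\foz$. But the proposal has a genuine gap, and it is precisely at the point you flag as "step (1)" and then resolve in the wrong way. The dictionary you are looking for is not an exponential/group-like matching at all: the preliminaries already record the linear identity $\adari(M)(A)=\gari(\preari(M,A),\invgari(M))=\fragari(\preari(M,A),M)$, and the paper applies Theorem \ref{thm:first} directly to the pair $(\preari(\fess,A),\fess)$. No passage through $\exp_{\preari}$ or "taking the Lie-algebra part" is needed, so your steps (1) and (4) are a detour.

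The real content that your proposal leaves unformulated is the intertwining identity
\[\swap(\preari(\fess,A))=\preari(\dess,\swap(A)),\]
which is what turns $\fragari(\swap(\preari(\fess,A)),\dess)$ into $\adari(\dess)(\swap(A))$ after the first fundamental identity has been applied. This is the \emph{only} place where the hypothesis $A\in\ARI_{\push}$ is consumed (the paper proves it "in a quite same way as" Schneps (4.5.5)); Theorem \ref{thm:first} itself holds for all of $\GARI$ with no $\push$ assumption, so your claim that the $\push$-invariance is what "makes $\fragari$ behave well under $\swap$" misattributes the role of the hypothesis and leaves the actual step unproven. A smaller but real slip: the $\crash$ evaluations needed are those of the \emph{swappees} appearing in the second argument after applying Theorem \ref{thm:first} (the paper cites $\crash(\foss)=\crash(\doss)=\foz$ from \cite[Theorem 1.2]{kawamura25}), not $\crash(\fess)$ and $\crash(\dess)$ as written. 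Once the intertwining identity is stated and proved, the rest of your substitution argument goes through as in the paper.
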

\begin{proof}
  Since
  \[(\swap\circ\adari(\fess))(A)=\swap(\fragari(\preari(\fess,A),\fess)),\]
  we apply \eqref{eq:first_fundamental} as
  \begin{align}
    (\swap\circ\adari(\fess))(A)
    &=\ganit(\crash(\swap(\fess)))(\fragari(\swap(\preari(\fess,A)),\swap(\fess)))\\
    &=\ganit(\crash(\doss))(\fragari(\swap(\preari(\fess,A)),\dess))\\
    &=\ganit(\foz)(\fragari(\swap(\preari(\fess,A)),\dess)),
  \end{align}
  where we used \cite[Theorem 1.2]{kawamura25}.
  Thus it suffices to show $\swap(\preari(\fess,A))=\preari(\dess,\swap(A))$, and it is done in a quite same way as \cite[(4.5.5)]{schneps15}.
  The second equality in the statement is shown in the same way, as \cite[Theorem 1.2]{kawamura25} implies that $\crash(\foss)=\crash(\doss)=\foz$.
\end{proof}

We define the following two type of spaces:
\[
  \ARI_{\baral/\baral}\coloneqq\left\{A\in\ARI_{\al}~\left|~\begin{array}{c}A\binom{u_{1}}{v_{1}}=A\binom{-u_{1}}{-v_{1}},\\ \swap(A)\in\ARI_{\al}\end{array}\right.\right\},\]
and\[  \ARI_{\baral/\barfol}\coloneqq\left\{A\in\ARI_{\al}~\left|~\begin{array}{c}A\binom{u_{1}}{v_{1}}=A\binom{-u_{1}}{-v_{1}},\\ \swap(A)\in\ARI_{\fol}\end{array}\right.\right\}.
\]
where the space $\ARI_{\fol}$ denotes the set of all $\fO$-alternals.\\

We prepare two lemmas:
\begin{lemma}\label{lem:negpush}
  Every element of $\ARI_{\baral/\baral}$ is $\neg$- and $\push$-invariant.
\end{lemma}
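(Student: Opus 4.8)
The plan is to separate the statement into a part that is a formal consequence of a flexion operator identity together with the description of alternal bimoulds as Lie elements, and a part that is the genuine propagation of the length-one parity to higher length. For the formal part, I first record that $A$ is $\neg\push$-invariant. Alternal bimoulds are precisely the Lie elements for the shuffle product, hence antipalindromic up to the sign absorbed into $\pari$; equivalently they are fixed by $\mantar$. So $\mantar(A)=A$ and, since $\swap(A)$ is alternal as well, $\mantar(\swap(A))=\swap(A)$. Invoking the standard flexion identity $\neg\circ\push=\swap\circ\mantar\circ\swap\circ\mantar$ (available from \cite{kawamura25}; it is implicit in the operator calculus of \cite{schneps15}) together with $\swap\circ\swap=\id$, the right-hand side applied to $A$ collapses to $\swap(\mantar(\swap(A)))=\swap(\swap(A))=A$, so $(\neg\circ\push)(A)=A$ in every length, without using the length-one relation. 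In particular $\push(A)=\neg(A)$; and since $\neg$ commutes with both $\swap$ and $\mantar$ it commutes with $\push$, so $\push^{2}(A)=\push(\neg(A))=\neg(\push(A))=\neg(\neg(A))=A$.

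This already settles the even-length components: on the length-$r$ part $\push^{r+1}=\id$, so $A=\push^{r+1}(A)=\push^{2}(A)$ together with $\gcd(2,r+1)=1$ for $r$ even gives $\push(A)=A$, whence also $\neg(A)=A$ there. For the odd-length components I would set $C\coloneqq A-\push(A)$. By the above $\push(C)=-C$; moreover $C$ is again alternal with alternal swap (because $\push(A)=\neg(A)$ and $\neg$ preserves alternality), it is $\neg$-anti-invariant, and it vanishes in length one, because $A\in\ARI_{\baral/\baral}$ means $A\binom{u_1}{v_1}=A\binom{-u_1}{-v_1}$, hence $\push(A)=A$ in length one. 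The heart of the proof is to show that such a $C$ must vanish in all lengths --- equivalently, that the $(-1)$-eigenspace of $\push$ inside the space of bialternal bimoulds that are $\neg$-even in length one is trivial. I would establish (or cite) this by the usual induction on length, confronting the shuffle relations satisfied by $C$ with those satisfied by $\swap(C)$. Once $C=0$, we get $\push(A)=A$ on the odd-length components too, and $\neg(A)=(\neg\circ\push)(A)=A$ by the first paragraph.

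The only real obstacle I anticipate is this final vanishing. The reduction to $\neg\push$-invariance and the even-length case are formal, resting on the flexion identity above, the classical fact that alternal bimoulds are $\mantar$-fixed, the involutivity of $\swap$, and the commutation of $\neg$ with $\swap$ and $\mantar$. By contrast, excluding a nonzero anti-push-invariant bialternal bimould is a genuine length-induction that plays the two shuffle structures against one another --- exactly the feature that makes dimorphy statements delicate; one must additionally check that the flexion identity used in the first paragraph is compatible with the length filtration, so that it meshes with that induction.
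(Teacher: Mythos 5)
Your reduction is correct as far as it goes, but it stops exactly where the content of the lemma begins. For calibration: the paper does not argue this lemma at all --- its ``proof'' is a citation of \cite[Lemma 2.5.5]{schneps15} --- and what you have reproduced is only the formal half of that argument. The first paragraph is fine: bialternality gives $\mantar(A)=A$ and $\mantar(\swap(A))=\swap(A)$, hence $A$ is fixed by the four-fold composition, hence $(\neg\circ\push)(A)=A$. (Minor slip: $\swap\circ\mantar\circ\swap\circ\mantar$ equals $\neg\circ\push^{-1}$, not $\neg\circ\push$; the paper's form of the identity is $\mantar\circ\swap\circ\mantar\circ\swap=\neg\circ\push$. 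Since $A$ is fixed by both compositions this costs nothing.) The even-length case via $\push^{2}(A)=A$, $\push^{r+1}=\id$ and $\gcd(2,r+1)=1$ is also correct.

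The genuine gap is the odd-length case, which you openly defer: you reduce the lemma to the vanishing of a bialternal $C$ supported in odd lengths $\ge 3$ with $\push(C)=-C$, and then say you ``would establish (or cite)'' this by ``the usual induction on length''. That is not a proof, and the step is not routine: in the polynomial specialization your claim is a depth-parity--type vanishing, so it cannot follow from pure formalism, and nothing in your sketch identifies the mechanism that kills $C$. The ingredient that actually closes the argument is \emph{circ-neutrality}: an alternal bimould $D$ satisfies $\sum_{k=0}^{r-1}\pus^{k}(D)=0$ in each length $r\ge 2$, where $\pus$ is the plain cyclic shift (this follows from the shuffle relations, e.g.\ by pairing the $QR$- and $RQ$-contributions of a bracket $[Q,R]$ against the cyclic class of a word), and $\pus$ is conjugate to $\push$ by $\swap$. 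Applying this to $D=\swap(C)$, which is alternal and satisfies $\pus(D)=-D$, the cyclic sum collapses to $\bigl(\sum_{k=0}^{r-1}(-1)^{k}\bigr)D=D$ for $r$ odd, forcing $D=0$ and hence $C=0$. Without this (or an equivalent) input your proposal establishes only $\neg\push$-invariance and the even-length half of the statement; with it, your decomposition into $A$ and $C=A-\push(A)$ becomes a complete and reasonably clean proof, essentially the one in \cite[Lemma 2.5.5]{schneps15}.
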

\begin{proof}
  There is a completed proof in \cite[Lemma 2.5.5]{schneps15}.
\end{proof}

\begin{lemma}\label{lem:lead}
    For $A\in\BIMU$, we put
    \[\lead(A)\coloneqq\leng_{m}(A),\]
    where $m$ is the minimum value of $\{r\mid \leng_{r}(A)\neq 0\}$.
    Then we have $\lead(A)\in\ARI_{\baral/\baral}$ for every $A\in\ARI_{\baral/\barfol}$.
\end{lemma}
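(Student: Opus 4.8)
The plan is to verify directly, for the bimould $L := \lead(A)$, the three conditions defining $\ARI_{\baral/\baral}$. Write $m := \min\{r \mid \leng_r(A) \neq 0\}$; since $A \in \ARI$ we have $\leng_0(A) = 0$, hence $m \geq 1$, and $L = \leng_m(A)$ is nonzero and concentrated in length $m$ (if $A = 0$ there is nothing to prove). First I would observe that alternality is a length-homogeneous condition: each defining relation $A(\ba \sh \bb) = 0$ with $\ba, \bb$ nonempty involves only the component $\leng_{|\ba| + |\bb|}(A)$, so every homogeneous piece of an alternal bimould is again alternal; together with $\leng_0(L) = 0$ this gives $L \in \ARI_{\al}$. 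The parity condition $L\binom{u_1}{v_1} = L\binom{-u_1}{-v_1}$ only concerns the length-one part of $L$: for $m \geq 2$ it reads $0 = 0$, and for $m = 1$ it is inherited from $A \in \ARI_{\baral/\barfol}$.

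The only substantial point is that $\swap(L)$ is alternal. Since $\swap$ preserves the length filtration, $\leng_r(\swap(A)) = \swap(\leng_r(A))$ for every $r$, so $\swap(A)$ again has leading length $m$, with $\lead(\swap(A)) = \swap(L)$. By hypothesis $\swap(A) \in \ARI_{\fol}$, i.e.\ $B := \ganit(\foz)^{-1}(\swap(A))$ is alternal. The key structural input I would use is that $\ganit(\foz)$ — and hence its inverse — preserves the length filtration and induces the identity on the associated graded. Granting this, $\leng_r(B) = 0$ for $r < m$ and $\leng_m(B) = \leng_m(\swap(A)) = \swap(L)$, so $\lead(B) = \swap(L)$; since $B$ is alternal, the homogeneity remark above shows its leading homogeneous component $\swap(L)$ is alternal. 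Thus all three conditions hold, and $L \in \ARI_{\baral/\baral}$.

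To establish the filtered unipotence of $\ganit(\foz)$, I would read off from the defining formula for $\ganit$ that, for a length-homogeneous $M$ of length $\ell$, the length-$n$ part of $\ganit(S)(M)$ is assembled from values of $M$ on length-$\ell$ flexed pieces of the $n$ arguments and values of $S$ on the complementary flexed pieces; it therefore vanishes for $n < \ell$, and for $n = \ell$ the only surviving contribution is the one in which $S$ is evaluated on the empty sequence, giving back $M(w_1, \ldots, w_\ell)$ — as already visible at length $1$ in the computation $\ganit(\fos)(\fO)(w_1) = \fO(w_1)$ from the proof of Lemma \ref{lem:garit_os}, which uses only that $\foz(\emptyset) = 1$, i.e.\ $\foz \in \GARI$. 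Hence $\ganit(S)(M) \equiv M$ modulo $\Fil^{\ge \ell + 1}$ for homogeneous $M$, whence the filtered unipotence of $\ganit(\foz)$ and, by inversion, of $\ganit(\foz)^{-1}$. Alternatively, this is an instance of the general fact that $\GARI$-conjugation operators are filtered and unipotent, and may simply be quoted from \cite{kawamura25}.

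The hard part is precisely this filtered-unipotence statement for $\ganit(\foz)$ — the single place where the flexion-unit structure enters the argument. Everything else rests on the elementary but essential distinction that ordinary alternality, unlike $\fO$-alternality (defined only after applying the length-mixing operator $\ganit(\foz)^{-1}$), is stable under passage to homogeneous components, together with the routine fact that $\swap$, and any filtered operator acting as the identity on the graded, fixes the leading term of a bimould.
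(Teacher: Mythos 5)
Your proof is correct and follows essentially the same route as the paper's: both reduce the substantial point to the fact that $(\ganit(\foz)^{-1}\circ\swap)(A)=\swap(\lead(A))+(\text{length}>m\text{ terms})$ together with the length-homogeneity of the alternality relations. You additionally justify the filtered unipotence of $\ganit(\foz)^{-1}$, which the paper takes for granted; that justification is sound and only adds detail.
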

\begin{proof}
    The condition $A\in\ARI_{\baral}$ immediately implies $\lead(A)\in\ARI_{\baral}$.
    Let us investigate the $\fO$-alternality at length $m$: focus on the alternal bimould $(\ganit(\foz)^{-1}\circ\swap)(A)$ is alternal. Since it has the form
    \[(\ganit(\foz)^{-1}\circ\swap)(A)=\swap(\lead(A))+(\text{length}>m\text{ terms})\]
    and the alternality condition is homogeneous, we obtain $\swap(\lead(A))\in\ARI_{\al}$.
\end{proof}

\begin{theorem}[{\cite[p.~95]{ecalle11}}]\label{thm:dimorphic_transport}
  Each of the maps $\adari(\fess)$ and $\adari(\dess)$ gives an $R$-linear bijection $\ARI_{\baral/\baral}\to\ARI_{\baral/\barfol}$.
\end{theorem}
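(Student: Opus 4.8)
The plan is to prove everything for $\adari(\fess)$ and then observe that the same steps work for $\adari(\dess)$ once one uses the second identity of Corollary \ref{cor:second} in place of the first. The $R$-linearity and injectivity come for free: for every $M\in\GARI$ the map $\adari(M)\colon\ARI\to\ARI$ is $R$-linear by its defining series, and it is bijective with inverse $\adari(\invgari(M))$ because $\adari(\gari(M,N))=\adari(M)\circ\adari(N)$ and $\adari$ of the $\gari$-unit is the identity. So the whole content is (i) that $\adari(\fess)$ sends $\ARI_{\baral/\baral}$ into $\ARI_{\baral/\barfol}$, and (ii) that this restriction is onto.

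For (i), I would fix $A\in\ARI_{\baral/\baral}$ and verify the three defining conditions of $\ARI_{\baral/\barfol}$ for $\adari(\fess)(A)$. \emph{Alternality:} by Theorem \ref{thm:bisymmetral} the bimould $\fess$ is symmetral, so $\logari(\fess)\in\ARI_{\al}$; since $\ARI_{\al}$ is a Lie subalgebra of $(\ARI,\ari)$ and is closed for the length-adic topology, every iterated $\ari$-bracket of $\logari(\fess)$ with $A$ stays alternal, and so does the convergent sum $\adari(\fess)(A)$. \emph{Parity at length $1$:} since $\logari(\fess),A\in\ARI\subseteq\Fil^{\ge 1}\BIMU$ and $\ari(\Fil^{\ge 1}\BIMU,\Fil^{\ge 1}\BIMU)\subseteq\Fil^{\ge 2}\BIMU$, we get $\adari(\fess)(A)-A\in\Fil^{\ge 2}\BIMU$, hence $\leng_{1}(\adari(\fess)(A))=\leng_{1}(A)$ and the identity $A\binom{u_{1}}{v_{1}}=A\binom{-u_{1}}{-v_{1}}$ is inherited. \emph{$\fO$-alternality of the swappee} — the crucial point: by Lemma \ref{lem:negpush} we have $A\in\ARI_{\push}$, so Corollary \ref{cor:second} applies and gives
\[\swap(\adari(\fess)(A))=\ganit(\foz)\bigl(\adari(\dess)(\swap(A))\bigr).\]
By hypothesis $\swap(A)\in\ARI_{\al}$, and $\adari(\dess)$ preserves alternality by the same argument as above (the bimould $\dess$ is symmetral, again by Theorem \ref{thm:bisymmetral}, now for the unit $\fO$). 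Thus $\adari(\dess)(\swap(A))\in\ARI_{\al}$, and by the very definition of $\fO$-alternality, $\ganit(\foz)$ of an alternal bimould is $\fO$-alternal. Hence $\swap(\adari(\fess)(A))\in\ARI_{\fol}$, which finishes (i).

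For (ii) I would run a length-adic successive approximation anchored on Lemma \ref{lem:lead}. Given $B\in\ARI_{\baral/\barfol}$, set $B_{1}=B$ and recursively $A_{i}=\lead(B_{i})$, $B_{i+1}=B_{i}-\adari(\fess)(A_{i})$. By Lemma \ref{lem:lead}, $A_{i}\in\ARI_{\baral/\baral}$; hence $\adari(\fess)(A_{i})\in\ARI_{\baral/\barfol}$ by (i), and since the defining conditions of $\ARI_{\baral/\barfol}$ are $R$-linear we obtain $B_{i+1}\in\ARI_{\baral/\barfol}$. Writing $m_{i}$ for the (homogeneous) length of $A_{i}=\lead(B_{i})$, the estimate above gives $\adari(\fess)(A_{i})-A_{i}\in\Fil^{\ge m_{i}+1}\BIMU$, so $\leng_{m_{i}}(\adari(\fess)(A_{i}))=A_{i}=\leng_{m_{i}}(B_{i})$ and the length-valuation of $B_{i+1}$ is strictly larger than that of $B_{i}$. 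Therefore $A\coloneqq\sum_{i\ge 1}A_{i}$ converges in $\ARI$; it lies in $\ARI_{\baral/\baral}$ because each defining condition there involves only finitely many length components and $\swap$ preserves length; and telescoping yields $\adari(\fess)(A)=\sum_{i\ge 1}(B_{i}-B_{i+1})=B$, which gives surjectivity. The same scheme works verbatim for $\adari(\dess)$.

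I expect the genuine difficulty to sit entirely in the $\fO$-alternality step of (i): it is there that one must invoke both the $\push$-invariance of $\ARI_{\baral/\baral}$ (Lemma \ref{lem:negpush}) in order to be allowed to use Corollary \ref{cor:second}, and the bisymmetrality of the secondary bimoulds $\fess,\dess,\foss$ (Theorem \ref{thm:bisymmetral}) in order to keep alternality under control after transport by $\ganit(\foz)$. Everything else — linearity, injectivity, the length-$1$ parity, and the surjectivity induction — is routine bookkeeping with the length filtration and with the $R$-module structure of the two spaces.
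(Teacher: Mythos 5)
Your proposal is correct and follows essentially the same route as the paper: alternality and the length-one parity via the length filtration and the symmetrality of $\fess$, the $\fO$-alternality of the swappee via Lemma \ref{lem:negpush} plus Corollary \ref{cor:second} together with Theorem \ref{thm:bisymmetral}, and surjectivity by the same $\lead$-based successive approximation anchored on Lemma \ref{lem:lead}. The only (harmless) cosmetic differences are that you get injectivity from the global invertibility of $\adari$ rather than from the explicit preimage construction, and that the symmetrality of $\dess=\swap(\fess)$ is part of the bisymmetrality of $\fess$ for the unit $\fE$ itself, not an instance of Theorem \ref{thm:bisymmetral} ``for the unit $\fO$''.
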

\begin{proof}
  As the $R$-linearity is clear from the construction of $\adari$, we prove the required condition for the image.
  Let $A$ belong to $\ARI_{\baral/\baral}$. The parity condition for $\adari(\fess)(A)$ is easy as $\leng_{1}\circ\adari(\fess)=\leng_{1}$.
  Since the $\gari$-product preserves symmetrality (\cite[Proposition 3.17]{kawamura25}) and $\expari(\ARI_{\al})=\GARI_{\as}$ (\cite[Theorem A.7]{komiyama21}), the bimould
  \[\adari(\fess)(A)=\logari(\gari(\fess,\expari(A),\invgari(\fess)))\]
  is alternal as $\fess\in\GARI_{\as}$ (Theorem \ref{thm:bisymmetral}).
  Next, using the $\push$-invariance of $A$, which is obtained by the assumption and Lemma \ref{lem:negpush}, we can apply Corollary \ref{cor:second}, which immediately yields $\swap(A)\in\ARI_{\fol}$. 

    Finally let us show that the inverse image $\adari(\fess)^{-1}(\ARI_{\baral/\barfol})$ is contained in $\ARI_{\baral/\baral}$.
    Take an arbitrary $B\in\ARI_{\baral/\barfol}$.
    We define a sequence $(A_{0},A_{1},\ldots)$ of bimoulds by the recursive rule, $A_{0}\coloneqq B$ and
    \[A_{n+1}\coloneqq A_{n}-(\adari(\fess)\circ\lead)(A_{n}).\]
    We can show that each $A_{n}$ belongs to $\ARI_{\baral/\barfol}$ by induction and using Lemma \ref{lem:lead}.
    Then it is easily shown that
    \begin{align}
        B
        &=A_{0}\\
        &=\sum_{n=0}^{\infty}(A_{n}-A_{n+1})\\
        &=\sum_{n=0}^{\infty}(\adari(\fess)\circ\lead)(A_{n})\\
        &=\adari(\fess)\left(\sum_{n=0}^{\infty}\lead(A_{n})\right).
    \end{align}
    Here, since for each $\lead(A_{n})$ we have the unique strictly increasing sequence $\{\ell_{n}\}_{n\ge 0}$ of positive integers such that $\lead(A_{n})\in\BIMU_{\ell_{n}}$ and $\lead(\adari(\fess)(\lead(A_{n})))$ also belongs to $\BIMU_{\ell_{n}}$,     
    the above infinite sum converges with respect to the length filtration of $\BIMU$.
    From the above expression, we obtain $B\in\ARI_{\baral/\baral}$, because each component $\lead(A_{n})$ belongs to $\ARI_{\baral/\baral}$ by Lemma \ref{lem:lead} and the previous induction.
    We can use the same argument to show the $\dess$ case.
\end{proof}

\begin{lemma}\label{lem:alal_lie}
  The $R$-module $\ARI_{\baral/\baral}$ becomes a Lie subalgebra of $\ARI$.
\end{lemma}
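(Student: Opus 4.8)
The plan is to check directly that $\ari(A,B)\in\ARI_{\baral/\baral}$ whenever $A,B\in\ARI_{\baral/\baral}$. The space $\ARI_{\baral/\baral}$ is cut out inside $\ARI$ by exactly three conditions: alternality of $A$, the length-one parity $A\binom{u_{1}}{v_{1}}=A\binom{-u_{1}}{-v_{1}}$, and alternality of $\swap(A)$. So it suffices to show that the $\ari$-bracket preserves each of these.

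Alternality of $\ari(A,B)$ is just the statement that $(\ARI_{\al},\ari)$ is a Lie algebra: from $\expari(\ARI_{\al})=\GARI_{\as}$ together with the fact that $\gari$ preserves symmetrality, $\GARI_{\as}$ is a pro-unipotent subgroup of $(\GARI,\gari)$, and $\ARI_{\al}=\logari(\GARI_{\as})$ is its Lie algebra, hence closed under $\ari$. The parity condition needs nothing: since $A$ and $B$ have vanishing length-zero part, the filtration property of the $\ari$-bracket recalled just after the definition of $\adari$ gives $\ari(A,B)\in\Fil^{\ge 2}\BIMU$, so its length-one component is $0$ and the parity identity holds trivially.

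The only substantial point is that $\swap(\ari(A,B))$ is alternal. Here the plan is to exploit the two structural facts about $\swap$ already underlying the proof of Corollary \ref{cor:second}: first, $\swap$ is an anti-automorphism for the multiplication $\mmu$, so that $\swap(\lu(A,B))=-\lu(\swap(A),\swap(B))$; and second, the conjugation rule expressing $\swap\circ\arit(B)\circ\swap$ through the swapped flexion operators (the $\irat$-family, whose linearization is the $\preira$-type operation that turns $\preari(\fess,A)$ into $\preari(\dess,\swap(A))$ in that proof). Decomposing $\ari(A,B)$ into its $\mmu$-part $\lu$ and its flexion part $\arit$ and applying $\swap$ term by term, these two facts rewrite $\swap(\ari(A,B))$ entirely in terms of $\swap(A)$ and $\swap(B)$ by operations that preserve alternality; since $\swap(A)$ and $\swap(B)$ are alternal by hypothesis, so is $\swap(\ari(A,B))$. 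The delicate issue I expect to be the main obstacle is that the conjugation rule for $\arit$ comes with correction terms supported in low length (involving $\leng_{1}$), and these must be shown to cancel; this is precisely where the $\neg$- and $\push$-invariance of $A$ and $B$ guaranteed by Lemma \ref{lem:negpush} is used. In other words, the crux is to pin down the exact $\swap$-conjugate of the $\ari$-bracket on the subspace of $\push$-invariants and to see that it lands back inside $\ARI_{\al}$.

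Alternatively one may route the argument through Theorem \ref{thm:dimorphic_transport}: since $\adari(\fess)$ is a Lie-algebra automorphism of $(\ARI,\ari)$ carrying $\ARI_{\baral/\baral}$ bijectively onto $\ARI_{\baral/\barfol}$, the claim is equivalent to $\ARI_{\baral/\barfol}$ being a Lie subalgebra. However, checking the latter runs into the same obstacle of understanding $\swap\circ\ari$, now against $\fO$-alternality rather than alternality, so this reformulation relocates the computation without avoiding it; I would therefore carry out the direct verification above.
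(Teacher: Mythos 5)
The paper does not prove this lemma at all: it simply cites Schneps \cite[Theorem 2.5.6]{schneps15}. Your proposal is therefore more ambitious than the paper, and the first two-thirds of it are fine: alternality is preserved because $\ARI_{\al}$ is the Lie algebra of the group $\GARI_{\as}$, and the length-one parity condition is vacuous for a bracket of two elements of $\ARI$ since $\ari(A,B)\in\Fil^{\ge 2}\BIMU$. Both of these points are correct and cleanly argued.

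The problem is that the third condition --- alternality of $\swap(\ari(A,B))$ --- is exactly the nontrivial content of the lemma, and your proposal does not prove it; it only describes a plan. You write that the $\swap$-conjugation rule for $\arit$ ``rewrite[s] $\swap(\ari(A,B))$ entirely in terms of $\swap(A)$ and $\swap(B)$ by operations that preserve alternality,'' and then immediately concede that the conjugation rule carries low-length correction terms whose cancellation is ``the crux'' and ``the main obstacle,'' to be handled ``precisely where the $\neg$- and $\push$-invariance \ldots is used.'' That cancellation is the entire theorem: the identity relating $\swap\circ\arit(B)\circ\swap$ to the $\irat$-family holds only up to terms that must be shown to vanish (or to preserve alternality) on $\push$-invariant bimoulds, and establishing this occupies the bulk of Schneps' proof of Theorem 2.5.6. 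As written, your argument assumes its own key step. To close the gap you would either have to carry out that computation explicitly (deriving the precise formula for $\swap(\ari(A,B))-\ari(\swap(A),\swap(B))$ on $\push$-invariants and checking it is alternal), or do what the paper does and invoke \cite[Theorem 2.5.6]{schneps15} directly. Your alternative route through Theorem \ref{thm:dimorphic_transport} is also not available here, since in the paper's logical order that theorem is proved \emph{before} this lemma but the two are then combined to deduce that $\ARI_{\baral/\barfol}$ is a Lie subalgebra --- so using the transport to reduce one closure statement to the other, as you note yourself, merely relocates the same unproved computation.
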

\begin{proof}
  It was shown by Schneps \cite[Theorem 2.5.6]{schneps15}.
\end{proof}

Since Theorem \ref{thm:dimorphic_transport} and Lemma \ref{lem:alal_lie} holds, we obtain the following.

\begin{corollary}
  The $R$-module $\ARI_{\baral/\barfol}$ is a Lie subalgebra of $\ARI$.
\end{corollary}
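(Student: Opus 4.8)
The plan is to exploit the fact that $\adari(\fess)$ is not merely an $R$-linear bijection but an automorphism of the Lie algebra $(\ARI,\ari)$. Indeed, among the general properties of $\adari$ recalled in Section~2.1 we have
\[
  \adari(\fess)(\ari(A,B))=\ari(\adari(\fess)(A),\adari(\fess)(B))
\]
for all $A,B\in\ARI$, and $\adari(\fess)$ is invertible on $\ARI$ with inverse $\adari(\invgari(\fess))$, since $\adari$ is multiplicative with respect to $\gari$ (that is, $\adari(\gari(M,N))=\adari(M)\circ\adari(N)$). Consequently, the image under $\adari(\fess)$ of any $\ari$-Lie subalgebra of $\ARI$ is again such a subalgebra.

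Next I would invoke Theorem~\ref{thm:dimorphic_transport}, which tells us that $\adari(\fess)$ restricts to an $R$-linear bijection $\ARI_{\baral/\baral}\to\ARI_{\baral/\barfol}$; in particular $\ARI_{\baral/\barfol}=\adari(\fess)(\ARI_{\baral/\baral})$ as $R$-modules. Combining this identification with Lemma~\ref{lem:alal_lie}, which asserts that $\ARI_{\baral/\baral}$ is a Lie subalgebra of $\ARI$, and with the bracket-compatibility of $\adari(\fess)$ from the first paragraph, we conclude: given $A,B\in\ARI_{\baral/\barfol}$, write $A=\adari(\fess)(A')$ and $B=\adari(\fess)(B')$ with $A',B'\in\ARI_{\baral/\baral}$, so that $\ari(A,B)=\adari(\fess)(\ari(A',B'))$ lies in $\adari(\fess)(\ARI_{\baral/\baral})=\ARI_{\baral/\barfol}$.

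I do not anticipate any genuine obstacle here: the only point worth checking is that the bracket-compatibility of $\adari(\fess)$ may be applied to elements of the subspaces in question, and this is immediate because the identity holds on all of $\ARI$. (One could equally run the argument with $\adari(\dess)$ in place of $\adari(\fess)$, using the second half of Theorem~\ref{thm:dimorphic_transport}.)
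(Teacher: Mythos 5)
Your argument is correct and is exactly the one the paper intends: it deduces the corollary from Theorem \ref{thm:dimorphic_transport} and Lemma \ref{lem:alal_lie}, using that $\adari(\fess)$ intertwines the $\ari$-bracket. You have simply spelled out the details the paper leaves implicit.
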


\section{Subsymmetries following dimorphy}
Here we prove the counterpart of Lemma \ref{lem:negpush} for the dimorphic algebra $\ARI_{\baral/\barfol}$.
We prepare the following four operators:
\begin{align}
    \Omantar&\coloneqq\ganit(\foz)\circ\mantar\circ\ganit(\foz)^{-1},\\
    \Enegpush&\coloneqq\mantar\circ\swap\circ(\Omantar)\circ\swap,\\
    \Eneg&\coloneqq\neg\circ\adari(\fes),\\
    \Epush&\coloneqq(\Eneg)\circ(\Enegpush).
\end{align}
Be careful that the third factor of $\Enegpush$ is $\Omantar$, not\footnote{The original definition \cite[(3.51)]{ecalle11} looks like there is a mistake, because in general $\fO$-alternal moulds are not $\Emantar$-invariant.} $\Emantar$.

\begin{proposition}[{\cite[p.~66]{ecalle11}}]\label{prop:enegpush}
    Every element $A\in\ARI_{\baral/\barfol}$ is $\Enegpush$-invariant.
\end{proposition}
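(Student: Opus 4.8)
The plan is to show directly that $\Enegpush$ restricts to the identity on $\ARI_{\baral/\barfol}$, by peeling off the four factors of $\Enegpush=\mantar\circ\swap\circ\Omantar\circ\swap$ one at a time and using at each stage that the relevant reversal-type involution fixes the relevant class of bimoulds.

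The first step is to record the standard fact that an element of $\ARI$ is alternal if and only if it is $\mantar$-invariant. The direction I need follows from shuffle/Lie duality: an alternal bimould $A$ satisfies $A\binom{u_{1},\ldots,u_{r}}{v_{1},\ldots,v_{r}}=(-1)^{r-1}A\binom{u_{r},\ldots,u_{1}}{v_{r},\ldots,v_{1}}$, which is exactly $\mantar(A)=A$; this is among the basic facts of \cite{kawamura25}. Conjugating the operator identity $\Omantar=\ganit(\foz)\circ\mantar\circ\ganit(\foz)^{-1}$ through this equivalence gives the $\fO$-twisted version I actually want: a bimould $B$ is $\fO$-alternal --- i.e.\ $\ganit(\foz)^{-1}(B)$ is alternal --- if and only if $\Omantar(B)=B$. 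In particular every element of $\ARI_{\fol}$ is $\Omantar$-invariant. This is precisely the point the paper's footnote raises: it is $\Omantar$, twisted by $\fO$, and not $\Emantar$, twisted by $\fE$, that is adapted to $\fO$-alternality, which is why the definition of $\Enegpush$ uses the former.

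Next, let $A\in\ARI_{\baral/\barfol}$. By definition $\swap(A)\in\ARI_{\fol}$, so by the previous step $\Omantar(\swap(A))=\swap(A)$; applying the involution $\swap$ and using $\swap\circ\swap=\id$ on $\BIMU$ gives $(\swap\circ\Omantar\circ\swap)(A)=A$. Since also $A\in\ARI_{\al}$, we have $\mantar(A)=A$, and therefore $\Enegpush(A)=\mantar\bigl((\swap\circ\Omantar\circ\swap)(A)\bigr)=\mantar(A)=A$, which is the claim.

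I do not anticipate a real obstacle: the only inputs beyond the present excerpt are the standard equivalence between alternality and $\mantar$-invariance (together with its $\fO$-twist obtained by conjugation) and the elementary facts that $\swap^{2}=\id$ and that $\mantar$ and $\ganit(\foz)^{\pm 1}$ preserve $\ARI$. The only conceptual point worth stating carefully is the identification of $\Omantar$ --- rather than $\Emantar$ --- as the involution compatible with $\fO$-alternality; once that is isolated the argument is a two-line computation. It is also worth remarking that, in contrast with Lemma \ref{lem:negpush} for $\ARI_{\baral/\baral}$, this proof uses neither the parity condition nor the $\push$-invariance of $A$; only alternality of $A$ and $\fO$-alternality of $\swap(A)$ are needed.
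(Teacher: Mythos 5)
Your proof is correct and follows essentially the same route as the paper's: both deduce $\Omantar$-invariance of $\fO$-alternal bimoulds by conjugating the $\mantar$-invariance of alternals through $\ganit(\foz)$, and then peel off the factors of $\Enegpush$ exactly as you do. One small caution: you state alternality is \emph{equivalent} to $\mantar$-invariance (and likewise for the twisted version), whereas only the implication from alternality to $\mantar$-invariance holds in general --- but that is the only direction you actually use, so the argument stands.
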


\begin{proof}
    By definition and $\mantar$-invariance for alternals (\cite[Proposition 2.14]{kawamura25}), we easily check that every $\fO$-alternal bimould is $\Omantar$-invariant.
    Hence we have $(\Omantar\circ\swap)(A)=\swap(A)$ and
    \begin{align}
        \Enegpush(A)
        &=(\mantar\circ\swap\circ\Omantar\circ\swap)(A)\\
        &=\mantar(A)\\
        &=A.
    \end{align}
\end{proof}

\begin{theorem}
    Let $A$ be an element of $\ARI_{\baral/\barfol}$.
    Then $A$ is $\Epush$-invariant. 
\end{theorem}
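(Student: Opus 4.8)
The point is that Proposition~\ref{prop:enegpush} already does almost all of the work. Since $A\in\ARI_{\baral/\barfol}$ is $\Enegpush$-invariant and $\Epush=\Eneg\circ\Enegpush$ by definition,
\[
  \Epush(A)=\Eneg(\Enegpush(A))=\Eneg(A)=(\neg\circ\adari(\fes))(A),
\]
so the theorem is equivalent to the single assertion that $A$ is $\Eneg$-invariant, i.e.\ that $\neg(\adari(\fes)(A))=A$. This reduction is the first and essentially free step.

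For the second step I would pass to the linearized picture via the dimorphic transport of Theorem~\ref{thm:dimorphic_transport}. Since $\adari(\fess)\colon\ARI_{\baral/\baral}\to\ARI_{\baral/\barfol}$ is a bijection, write $A=\adari(\fess)(A_{0})$ with $A_{0}\in\ARI_{\baral/\baral}$; then $A_{0}$ is $\neg$-invariant by Lemma~\ref{lem:negpush}. As $\neg$ simultaneously negates all the $u_{i}$ and $v_{i}$, it is an automorphism of the flexion structure (one of the basic involutions): it intertwines $\logari$ and satisfies $\neg(\gari(X,Y))=\gari(\neg(X),\neg(Y))$, whence $\neg\circ\adari(M)=\adari(\neg(M))\circ\neg$ for every $M\in\GARI$. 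Combining this with the composition law $\adari(\fes)\circ\adari(\fess)=\adari(\gari(\fes,\fess))$ and the $\neg$-invariance of $A_{0}$, the identity $\neg(\adari(\fes)(A))=A$ turns into
\[
  \adari\bigl(\neg(\gari(\fes,\fess))\bigr)(A_{0})=\adari(\fess)(A_{0}),
\]
which follows from the mould identity $\neg(\gari(\fes,\fess))=\fess$, equivalently $\fes=\gari\bigl(\neg(\fess),\invgari(\fess)\bigr)$. Put differently, $\Eneg$ is exactly the operator into which the transport $\adari(\fess)$ carries the plain involution $\neg$, in the same way that $\Enegpush$ and $\Epush$ are meant to be the $\fE$-twisted counterparts of the corresponding untwisted operators.

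The third step --- proving the mould identity relating $\fes$ and $\fess$ --- carries the real content, and this is the step I expect to be the main obstacle. I would establish it using the bisymmetrality of $\fess$ (Theorem~\ref{thm:bisymmetral}) together with the explicit action of $\neg$ and $\swap$ on the quadruple $\fess,\dess,\foss,\doss$ recorded in \cite{kawamura25}, so as to reduce the statement to a finite computation involving only the flexion unit $\fE$ and its conjugate $\fO$. Once this is in hand the chain of reductions closes and, by the first step, $\Epush(A)=\Eneg(A)=A$. Everything outside the mould identity is formal manipulation with $\adari$, $\gari$ and $\neg$ built on the results of Sections~2 and~3, so no additional difficulty is expected there.
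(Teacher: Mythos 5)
Your proposal follows essentially the same route as the paper: reduce to $\Eneg$-invariance via Proposition~\ref{prop:enegpush}, transport through $\adari(\fess)$ using Theorem~\ref{thm:dimorphic_transport} and Lemma~\ref{lem:negpush}, and use the commutation $\neg\circ\adari(M)=\adari(\neg(M))\circ\neg$ to reduce everything to the mould identity $\fes=\gari(\neg(\fess),\invgari(\fess))$. The only difference is that the step you flag as the main obstacle is not one: that identity is exactly $\slash(\fess)=\fes$, which the paper simply cites as \cite[Theorem 1.3]{kawamura25}, so your argument closes without any further computation.
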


\begin{proof}
    By Proposition \ref{prop:enegpush}, it suffices to show the $\Eneg$-invariance. By the identity $\slash(\fess)=\fes$ (\cite[Theorem 1.3]{kawamura25}), we obtain $\adari(\fes)=\adari(\neg(\fess))\circ\adari(\fess)^{-1}$. Moreover, since $\neg$ is distributive for $\preari$ and $\gari$, for any bimould $X\in\ARI$ and $Y\in\GARI$ we have
    \begin{align}
        \adari(\neg(Y))(\neg(X))
        &=\fragari(\preari(\neg(Y),\neg(X)),\neg(Y))\\
        &=\fragari(\neg(\preari(Y,X)),\neg(Y))\\
        &=\gari(\neg(\preari(Y,X)),(\invgari\circ\neg)(Y))\\
        &=\gari(\neg(\preari(Y,X)),(\neg\circ\invgari)(Y))\\
        &=\neg(\gari(\preari(Y,X),\invgari(Y)))\\
        &=\neg(\fragari(\preari(Y,X),Y))\\
        &=\neg(\adari(Y)(X)).
    \end{align}
    Therefore the operator $\Eneg$ is equal to
    \[\Eneg=\neg\circ\adari(\fes)=\neg\circ\adari(\neg(\fess))\circ\adari(\fess)^{-1}=\adari(\fess)\circ\neg\circ\adari(\fess)^{-1}.\]
    By the assumption and Theorem \ref{thm:dimorphic_transport}, the operator $\adari(\fess)^{-1}(A)$ belongs to $\ARI_{\baral/\baral}$ and thus $\neg$-invariant from Lemma \ref{lem:negpush}.
    This shows the $\Eneg$-invariance of $A$.
\end{proof}

\section{Senary relation}
In this section, we give a proof of \'{E}calle's senary relation 
\[\Eter(B)=(\push\circ\mantar\circ\Eter\circ\mantar)(B)\]
for a $\Epush$-invairant bimould $B$.
Here, the operator $\Eter$ is defined by $\Eter(B)(\emp)\coloneqq B(\emp)$ and
\begin{multline}\Eter(B)(w_{1},\ldots,w_{r})\\\coloneqq B(w_{1},\ldots,w_{r})-B(w_{1},\ldots,w_{r-1})\fE(w_{r})+B((w_{1},\ldots,w_{r-1})\fur{w_{r}})\fE(\fll{w_{1},\ldots,w_{r-1}}w_{r})\end{multline}
for a bimould $B$.\\

In the following arguments, we often use the $\swap$-conjugation and $\swap\circ\anti$-conjugation of the product $\mmu$:
\begin{align}
    \swamu(A,B)&\coloneqq\swap(\mmu(\swap(A),\swap(B))),\\
    \answamu(A,B)&\coloneqq(\anti\circ\swap)(\mmu((\swap\circ\anti)(A),(\swap\circ\anti)(B)))\\
    &=\anti(\swamu(\anti(A),\anti(B))).
\end{align}
The next proposition is easy to prove but saying these conjugate products are very useful.

\begin{proposition}\label{prop:swamu_answamu}
    Let $A$ and $B$ be bimoulds.
    Then we have
    \begin{equation}\label{eq:swamu}
      \swamu(A,B)(\bw)=\sum_{\bw=\ba\bb}A(\ful{\ba}\bb)B(\ba\flr{\bb})
    \end{equation}
    and
    \begin{equation}\label{eq:answamu}
      \answamu(A,B)(\bw)=\sum_{\bw=\ba\bb}A(\ba\fur{\bb})B(\fll{\ba}\bb).
    \end{equation}
    Moreover, for bimoulds $A,B$ and $C$, the following are true:
    \begin{enumerate}
        \item $\swamu(\mmu(A,B),C)=\mmu(\swamu(A,C),B)$ if $A\in\ARI$.
        \item $\answamu(\mmu(A,B),C)=\mmu(A,\answamu(B,C))$ if $B\in\ARI$.
        \item $\swamu(\answamu(A,B),C)=\answamu(\swamu(A,C),B)$ if $A\in\ARI$.
    \end{enumerate}
\end{proposition}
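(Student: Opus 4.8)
The plan is to first derive the two closed formulas \eqref{eq:swamu} and \eqref{eq:answamu} by unwinding the definitions of $\swap$, $\anti$ and the plain mould product $\mmu$, and then to deduce the three mixed-associativity identities from these formulas together with the defining formula $\mmu(A,B)(\bw)=\sum_{\bw=\ba\bb}A(\ba)B(\bb)$; the membership hypotheses will serve only to discard one degenerate term in each case.

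For \eqref{eq:swamu} I would start from $\swamu(A,B)(\bw)=\mmu(\swap(A),\swap(B))(\swap(\bw))$ and expand the mould product over the prefix/suffix cuts of the word $\swap(\bw)$. Since $\swap$ reverses a word while performing the usual flexion-twisted recombination of the $u$- and $v$-entries, a cut of $\swap(\bw)$ after its first $k$ letters is in bijection with a decomposition $\bw=\ba\bb$ (with $|\bb|=k$), and re-applying $\swap$ to each of the two factors returns, entry by entry, precisely the decorated subwords $\ful{\ba}\bb$ and $\ba\flr{\bb}$ — this is exactly the content of the definitions of $\ful{}$ and $\flr{}$ in \cite{kawamura25}, so the sum obtained is literally the right-hand side of \eqref{eq:swamu}. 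Formula \eqref{eq:answamu} then follows from the already-recorded identity $\answamu(A,B)=\anti(\swamu(\anti(A),\anti(B)))$: conjugating by the order-reversal $\anti$ turns prefix cuts into suffix cuts and converts $\ful{},\flr{}$ into $\fur{},\fll{}$.

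For the identities, I would expand each side into a sum over three-block decompositions $\bw=\ba\bb\bc$ — the three blocks feeding the three moulds — using \eqref{eq:swamu}/\eqref{eq:answamu} once and the plain formula for $\mmu$ once. In (1), the left side $\swamu(\mmu(A,B),C)$ becomes $\sum_{\bw=\ba\bb\bc}A(\ast)B(\ast)C(\ast)$ with prescribed flexion decorations, and the right side $\mmu(\swamu(A,C),B)$ expands over the same index set; the two match decoration-by-decoration by the elementary \emph{restriction} properties of the four flexion operations (splitting a decorated block such as $\ful{\ba\bb}\bc$ reproduces $\ful{}$ and $\flr{}$ on the two halves, directly from the entrywise description). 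The only mismatch is the summand in which the block destined for $A$ is empty, i.e.\ $A(\emp)$; since $A\in\ARI$ this term is zero on both sides, so the two sides coincide. Identity (2) is then immediate from (1) by a purely formal manipulation: from $\anti(\mmu(X,Y))=\mmu(\anti(Y),\anti(X))$ and $\answamu(X,Y)=\anti(\swamu(\anti(X),\anti(Y)))$ one gets $\answamu(\mmu(A,B),C)=\anti\big(\swamu(\mmu(\anti(B),\anti(A)),\anti(C))\big)$, and applying (1) with $\anti(B)\in\ARI$ in the slot of $A$ rewrites this as $\mmu(A,\answamu(B,C))$. Identity (3) is handled exactly like (1): expand both sides over three-block decompositions, match the decorations, and use $A\in\ARI$ to kill the $A(\emp)$ term.

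The only place needing genuine care is the flexion bookkeeping in (1) and (3): one must check that the net decoration landing on each of the blocks $\ba,\bb,\bc$ is the same whether one first forms the $\swap$-twisted product and then the plain product, or the other way round. I would record the handful of restriction identities for $\ful{},\flr{},\fur{},\fll{}$ as a short preliminary computation, after which the matching of the two sums is mechanical.
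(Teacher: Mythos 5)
Your proposal is correct and follows essentially the same route as the paper: a direct expansion of $\mmu(\swap(A),\swap(B))$ at the swapped word to get \eqref{eq:swamu}, the $\anti$-conjugation identity $\answamu(A,B)=\anti(\swamu(\anti(A),\anti(B)))$ to get \eqref{eq:answamu}, and then (1)--(3) by matching three-block expansions, with $A(\emp)=0$ (resp.\ $B(\emp)=0$) killing the degenerate cut. The paper simply declares (1)--(3) ``immediately shown by these two expressions,'' so your explicit bookkeeping is a more detailed version of the same argument rather than a different one.
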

\begin{proof}
    Write $\bw=(w_{1},\ldots,w_{r})$.
    First, we show \eqref{eq:swamu} by direct computation:
    \begin{align}
      &\swamu(A,B)(\bw)\\
      &=\mmu(\swap(A),\swap(B))\binom{v_{r},v_{r-1}-v_{r},\ldots,v_{1}-v_{2}}{u_{1}+\cdots+u_{r},u_{1}+\cdots+u_{r-1},\ldots,u_{1}}\\
      &=\begin{multlined}[t]\sum_{i=0}^{r}\swap(A)\binom{v_{r},v_{r-1}-v_{r},\ldots,v_{r-i+1}-v_{r-i+2}}{u_{1}+\cdots+u_{r},u_{1}+\cdots+u_{r-1},\ldots,u_{1}+\cdots+u_{r-i+1}}\\\cdot\swap(B)\binom{v_{r-i}-v_{r-i+1},\ldots,v_{1}-v_{2}}{u_{1}+\cdots+u_{r-i},\ldots,u_{1}}\end{multlined}\\
      &=\sum_{i=0}^{r}A\binom{u_{1}+\cdots+u_{r-i+1},u_{r-i+2},\ldots,u_{r}}{v_{r-i+1},v_{r-i+2},\ldots,v_{r}}B\binom{u_{1},u_{2},\ldots,u_{r-i}}{v_{1}-v_{r-i+1},v_{2}-v_{r-i+1},\ldots,v_{r-i}-v_{r-i+1}}\\
      &=\sum_{\bw=\ba\bb}A(\ful{\ba}\bb)B(\ba\flr{\bb}).
    \end{align}
    Next, writing 
    \[\overleftarrow{w_{1}\cdots w_{r}}\coloneqq w_{r}\cdots w_{1}\]
    we show the second identity \eqref{eq:answamu} by using the first one as
    \begin{align}
      \answamu(A,B)(\bw)
      &=\swamu(\anti(A),\anti(B))(\overleftarrow{\bw})\\
      &=\sum_{\overleftarrow{\bw}=\ba'\bb'}\anti(A)(\ful{\ba'}\bb')\anti(B)(\ba'\flr{\bb'})\\
      &=\sum_{\bw=\overleftarrow{\bb'}\overleftarrow{\ba'}}\anti(A)(\ful{\ba'}\bb')\anti(B)(\ba'\flr{\bb'})\\
      &=\sum_{\bw=\overleftarrow{\bb'}\overleftarrow{\ba'}}A(\overleftarrow{\ful{\ba'}\bb'})B(\overleftarrow{\ba'\flr{\bb'}})\\
      &=\sum_{\bw=\overleftarrow{\bb'}\overleftarrow{\ba'}}A((\overleftarrow{\bb'})\fur{\overleftarrow{\ba'}})B(\fll{\overleftarrow{\bb'}}(\overleftarrow{\ba'}))\\
      &=\sum_{\bw=\ba\bb}A(\ba\fur{\bb})B(\fll{\ba}\bb)\qquad (\overleftarrow{\bb'}\mapsto\ba,~\overleftarrow{\ba'}\mapsto\bb).
    \end{align}
    The rest statements are immediately shown by these two expressions.
\end{proof}

\begin{lemma}\label{lem:push_swamu_answamu}
  For bimoulds $A$ and $B$, we have
  \[\push(\swamu(A,B))=\answamu(\push(B),\push(A)).\]
\end{lemma}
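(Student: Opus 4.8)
The plan is to reduce the identity to the explicit formulas \eqref{eq:swamu} and \eqref{eq:answamu} for $\swamu$ and $\answamu$, together with the explicit description of $\push$ recalled from \cite{kawamura25}. Since both sides are $R$-bilinear in $(A,B)$, it suffices to compare their values at a fixed sequence $\bw=(w_1,\ldots,w_r)$; the case $\bw=\emp$ is immediate, so assume $r\ge 1$, write $w_i=\binom{u_i}{v_i}$, and put $v_0:=0$. Recall that $\push$ acts on length-$r$ bimoulds by $\push(M)(\bw)=M(\push\bw)$, where $\push\bw$ is the length-$r$ sequence whose first letter is $\binom{-(u_1+\cdots+u_r)}{-v_r}$ and whose $i$-th letter is $\binom{u_{i-1}}{v_{i-1}-v_r}$ for $2\le i\le r$.

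The heart of the argument is the following flexion-compatibility statement, which I would establish by a direct comparison of $u$- and $v$-components: for every decomposition $\bw=\ba\bb$, writing $\push\bw=\bc\bd$ with $|\bc|=|\ba|$, one has
\begin{equation}\label{eq:push_claim}
  \ful{\bc}\bd=\push\bigl(\fll{\ba}\bb\bigr)
  \qquad\text{and}\qquad
  \bc\flr{\bd}=\push\bigl(\ba\fur{\bb}\bigr),
\end{equation}
where on the right $\push$ denotes the analogous transform applied to the (shorter) flexed sequences $\fll{\ba}\bb$ and $\ba\fur{\bb}$. Granting \eqref{eq:push_claim}, the lemma follows from
\begin{align}
  \push(\swamu(A,B))(\bw)
  &=\swamu(A,B)(\push\bw)\\
  &=\sum_{\push\bw=\bc\bd}A(\ful{\bc}\bd)\,B(\bc\flr{\bd})\\
  &=\sum_{\bw=\ba\bb}A\bigl(\push(\fll{\ba}\bb)\bigr)\,B\bigl(\push(\ba\fur{\bb})\bigr)\\
  &=\sum_{\bw=\ba\bb}(\push A)(\fll{\ba}\bb)\,(\push B)(\ba\fur{\bb})\\
  &=\answamu(\push B,\push A)(\bw),
\end{align}
where the second line is \eqref{eq:swamu}, the third uses \eqref{eq:push_claim} and the evident bijection between the decompositions of $\bw$ and those of $\push\bw$ (cut at the same position), the fourth unwinds the definition $(\push A)(\bs)=A(\push\bs)$, and the last is \eqref{eq:answamu}.

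Thus the proof reduces to \eqref{eq:push_claim}, and that is the only step requiring real care. The point is purely combinatorial: one must track the wrap-around letter $\binom{-(u_1+\cdots+u_r)}{-v_r}$ of $\push\bw$ through the redistributions of partial sums of the $u_i$ and the shifts of the $v_i$ effected by the flexion operations occurring in \eqref{eq:swamu} and \eqref{eq:answamu}. Once $\fll{\ba}\bb$, $\ba\fur{\bb}$, $\ful{\bc}\bd$ and $\bc\flr{\bd}$ are written out explicitly in the $u_i$ and $v_i$—with the convention $v_0=0$ absorbing the boundary cases $\ba=\emp$ and $\bb=\emp$ into the generic formulas—both equalities in \eqref{eq:push_claim} reduce to elementary telescoping identities and no conceptual difficulty remains. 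I expect this index bookkeeping, in particular making the signs in the wrap-around term line up on both sides, to be the main (and essentially the only) obstacle.
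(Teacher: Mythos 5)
Your proof is correct, but it takes a genuinely different route from the paper's. The paper argues entirely at the level of operators: it invokes the identity $\neg\circ\push=\anti\circ\swap\circ\anti\circ\swap$, together with the facts that $\anti$ reverses and $\neg$ preserves $\mmu$-products, and unwinds the definitions of $\swamu$ and $\answamu$ in six lines without ever touching indices. You instead work letter-by-letter: you reduce the lemma to the flexion-compatibility claim that, for $\bw=\ba\bb$ and $\push\bw=\bc\bd$ cut at the same position, one has $\ful{\bc}\bd=\push(\fll{\ba}\bb)$ and $\bc\flr{\bd}=\push(\ba\fur{\bb})$, and then conclude via the explicit sums \eqref{eq:swamu} and \eqref{eq:answamu}. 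I checked your compatibility claim directly (the sum of the $u$-components of $\bc$ telescopes to $-(u_{k+1}+\cdots+u_r)$, and the $v$-shifts by $v_k-v_r$ compose correctly with the wrap-around letter; the boundary cases $\ba=\emp$, $\bb=\emp$ are covered by the $v_0=0$ convention), so the argument is sound, though you leave that verification as asserted bookkeeping rather than writing it out. The trade-off: the paper's proof is shorter and reuses standard structural identities, while yours is more self-contained and makes visible exactly how $\push$ permutes the flexed decompositions --- a fact that is reused implicitly elsewhere (e.g., in the relation $\push(\mmu(\fO,M))=-\swamu(\fO,M)$ used later in the proof of Theorem \ref{thm:357}). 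If you adopt your route in writing, you should include the explicit index computation for the compatibility claim, since it is the entire content of the lemma.
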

\begin{proof}
  By the famous equality
  \begin{equation}
    \neg\circ\push=\anti\circ\swap\circ\anti\circ\swap,
  \end{equation}
  we have
  \begin{align}
    &\answamu(\push(B),\push(A))\\
    &=(\anti\circ\swap)(\mmu((\swap\circ\anti\circ\push)(B),(\swap\circ\anti\circ\push)(A)))\\
    &=(\anti\circ\swap)(\mmu((\neg\circ\anti\circ\swap)(B),(\neg\circ\anti\circ\swap)(A)))\\
    &=(\neg\circ\anti\circ\swap)(\mmu((\anti\circ\swap)(B),(\anti\circ\swap)(A)))\\
    &=(\push\circ\swap\circ\anti)(\mmu((\anti\circ\swap)(B),(\anti\circ\swap)(A)))\\
    &=(\push\circ\swap)(\mmu(\swap(A),\swap(B)))\\
    &=\push(\swamu(A,B)).
  \end{align}
\end{proof}

The senary relation \eqref{eq:senary} is nothing but the invariance under the map\footnote{It is originally denoted by $\fE\text{-}\push_{\ast}$. We changed the notation for more visiblity.} $\Esena$ defined as
\[\Esena\coloneqq\Eter^{-1}\circ\push\circ\mantar\circ\Eter\circ\mantar.\]
Actually, we can see that the $\Esena$-invariance and the $\Epush$-invariance are equivalent, by the following theorem.

\begin{theorem}[{\cite[(3.57)]{ecalle11}}]\label{thm:357}
    Let $B\in\ARI$.
    Then we have
    \begin{equation}\label{eq:357}
    (\id-\Esena)(B)=\swamu(\fes,(\id-\Epush)(B)).
    \end{equation}
\end{theorem}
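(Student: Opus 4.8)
The plan is to prove \eqref{eq:357} as an identity of $R$-linear operators on $\ARI$, built from the twisted products of Proposition \ref{prop:swamu_answamu}. The first step is to rewrite $\Eter$ in those terms. Let $\iota$ be the bimould with $\iota(\emp)=0$, $\iota(w)=\fE(w)$, and $\iota=0$ in lengths $\ge 2$; unwinding the flexion conventions and \eqref{eq:answamu} one checks that
\[
\Eter(B)=\mmu(B,1-\iota)+\answamu(B,\iota),
\]
with $1$ the $\mmu$-unit. In particular $\Eter=\id+N$ where $N$ strictly lowers the $B$-length, so $\Eter$ is invertible on $\ARI$; the same holds for $\swamu(\fes,\cdot)=\id+\swamu(\fes-1,\cdot)$ since $\fes-1$ has no length-$0$ part, and this invertibility is what turns \eqref{eq:357} into the equivalence of $\Esena$- and $\Epush$-invariance announced in the text. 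Applying the invertible $\Eter$ to \eqref{eq:357} and using $\Eter\circ\Esena=\push\circ\mantar\circ\Eter\circ\mantar$, the claim is equivalent to
\[
\Eter(B)-(\push\circ\mantar\circ\Eter\circ\mantar)(B)=\Eter\bigl(\swamu(\fes,(\id-\Epush)(B))\bigr).
\]

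Next I would establish a commutation rule between $\Eter$ and $\swamu(\fes,\cdot)$. Splitting $\fes=1+\fes^{>0}$ with $\fes^{>0}\in\ARI$, using $\swamu(1,\cdot)=\id$, and pushing the $\mmu$- and $\answamu$-factors of $\Eter$ past $\swamu(\fes^{>0},\cdot)$ by the compatibilities (1) and (3) of Proposition \ref{prop:swamu_answamu}, one gets
\[
\Eter\bigl(\swamu(\fes,X)\bigr)=\Eter(X)+\swamu\bigl(\Eter(\fes)-1,\,X\bigr),
\]
using $\Eter(1)=1$ together with the displayed formula for $\Eter$ applied to $\fes^{>0}$. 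Substituting this with $X=(\id-\Epush)(B)$, the identity to prove reduces to
\[
(\push\circ\mantar\circ\Eter\circ\mantar)(B)=\Eter\bigl(\Epush(B)\bigr)-\swamu\bigl(\Eter(\fes)-1,\,(\id-\Epush)(B)\bigr),
\]
which no longer contains $\Eter^{-1}$. It then remains to identify $\Eter(\fes)-1$ in closed form: expanding its three summands via the product formula for $\fes$, the flexion pieces recombine — the determination of the result being the one place where the defining relation of the flexion unit $\fE$ is used — and this is what makes the right-hand side above collapse correctly.

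The bulk of the proof is then the matching of the two sides of this last display. On the left, writing $\mantar$ as reversal-with-sign turns $\mantar\circ\Eter\circ\mantar$ into the ``left-end'' analogue of $\Eter$ — a degree-$\le 1$ modification of the first variable, again of the shape $\mmu(1-\iota,\cdot)$ plus a left-handed twisted product — after which $\push$ is absorbed into an $\answamu$/$\swamu$ prefactor by Lemma \ref{lem:push_swamu_answamu} and the relation $\neg\circ\push=\anti\circ\swap\circ\anti\circ\swap$. On the right, I would substitute $\Epush=\neg\circ\adari(\fes)\circ\mantar\circ\swap\circ\Omantar\circ\swap$, replace $\Omantar$ by $\ganit(\foz)\circ\mantar\circ\ganit(\foz)^{-1}$, use the $\neg$-distributivity of $\preari$ and $\gari$ (as in the proof of the $\Epush$-invariance above) and the conjugate-unit relations between $\fE$ and $\fO$, and commute $\swamu(\fes,\cdot)$ through $\adari(\fes)$ and the reflections by the compatibilities (1)--(3) of Proposition \ref{prop:swamu_answamu} and the bisymmetrality of $\fess$ (Theorem \ref{thm:bisymmetral}). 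After these reductions both sides should reduce to the same sum indexed by a single cut of the input sequence.

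The main obstacle is exactly this last matching: the flexion operations coming from $\Eter$, from the conjugation by $\ganit(\foz)$ hidden inside $\Omantar$, and from the three reflections $\mantar$, $\swap$, $\neg$ all rearrange the $u$- and $v$-components simultaneously, and the cancellation of every ``length $\ge 2$'' contribution in the twisted products (so that only the one-cut terms survive on each side) has to be done variable by variable. A further point to keep in mind is that $B$ is an arbitrary element of $\ARI$: no dimorphy hypothesis is available, so every intermediate equality must be a genuine operator identity on all of $\ARI$ — which is why the argument rests on Proposition \ref{prop:swamu_answamu} and Lemma \ref{lem:push_swamu_answamu} rather than on any symmetry of $B$.
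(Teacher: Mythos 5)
Your opening reductions are sound and match the paper's own starting point: the closed formula $\Eter(B)=\mmu(B,1-\fE)+\answamu(B,\fE)$ is exactly Lemma \ref{lem:ter_explicit}\,(1), the invertibility observations are correct, and the commutation rule $\Eter(\swamu(\fes,X))=\Eter(X)+\swamu(\Eter(\fes)-1,X)$ does follow from Proposition \ref{prop:swamu_answamu}\,(1) and (3) applied to $\fes-1\in\ARI$, together with $\Eter(1)=1$. The reduction to an identity free of $\Eter^{-1}$ is a legitimate alternative to the paper's reduction (which instead inverts $\swamu(1-\fE,\cdot)$ and applies $\swap\circ\Epush^{-1}$, arriving at the $\ORush$ identity \eqref{eq:357rephrase}). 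But the proposal stops precisely where the real content of the theorem begins, and what remains is not a routine matching.

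Two gaps are decisive. First, $\Epush$ contains $\adari(\fes)=\id+\sum_{n\ge1}\frac{1}{n!}\ari(\logari(\fes),\ldots)$, an infinite series of iterated $\ari$-brackets, so $\Eter(\Epush(B))$ is not a finite flexion expression that you can ``reduce to a sum indexed by a single cut'' by commuting $\swamu(\fes,\cdot)$ past it. The paper's entire middle section exists to tame this: Lemma \ref{lem:preari_es} gives $\preari(\fes,B)$ in closed $\swamu/\mmu/\answamu$ form, Lemma \ref{lem:eswap} converts $\adari(\fes)^{-1}$ (after conjugation by $\swap$ and $\girat(\foz)$) into an explicit finite $\mmu$-expression, and Corollary \ref{cor:epush_explicit} packages this as a closed formula for $\Epush^{-1}$. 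Your plan has no analogue of this step, and without it the right-hand side of your reduced identity cannot be computed. Second, the final cancellation is not a formal consequence of Proposition \ref{prop:swamu_answamu} and Lemma \ref{lem:push_swamu_answamu}: in the paper it requires the tripartite relation of the flexion unit $\fO$, applied twice, after decomposing $\ORush$ into the four operators $R_{1},\ldots,R_{4}$ and the argument into the pieces $A_{11},A_{12},A_{21},A_{22}$. Your proposal acknowledges this matching as ``the main obstacle'' but supplies neither the decomposition nor the identity that makes it work. Finally, the appeal to the bisymmetrality of $\fess$ is spurious: $\fess$ does not appear anywhere in the proof of this theorem, which holds for arbitrary $B\in\ARI$ and rests only on the unit $\fE$, its conjugate $\fO$, and the tripartite relation.
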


The rest of this section is devoted to give a proof of this theorem.

\subsection{Another formulation of $\Epush$}
\begin{lemma}\label{lem:epush}
    Define
    \[\Eswap\coloneqq\adari(\fes)\circ\swap\circ\gaxit(\foz,\foz).\]
    Then we have
    \[\Epush=\neg\circ\mantar\circ\Eswap\circ\mantar\circ\swap.\]
\end{lemma}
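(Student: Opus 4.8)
The plan is to rewrite both sides of the asserted identity as words in the elementary operators $\neg,\mantar,\swap,\adari(\fes),\ganit(\foz)$ and $\gaxit(\foz,\foz)$, and to reduce the statement to a single operator equation inside the flexion calculus. Unfolding the definitions,
\[
\Epush=\Eneg\circ\Enegpush=\neg\circ\adari(\fes)\circ\mantar\circ\swap\circ\Omantar\circ\swap,\qquad \Omantar=\ganit(\foz)\circ\mantar\circ\ganit(\foz)^{-1},
\]
while the claimed right-hand side expands to
\[
\neg\circ\mantar\circ\Eswap\circ\mantar\circ\swap=\neg\circ\mantar\circ\adari(\fes)\circ\swap\circ\gaxit(\foz,\foz)\circ\mantar\circ\swap .
\]
Since $\neg$, $\mantar$ and $\swap$ are involutions, I would cancel the outer $\neg$ and the rightmost $\swap$ from both sides, reducing Lemma \ref{lem:epush} to the operator identity
\[
\adari(\fes)\circ\mantar\circ\swap\circ\Omantar=\mantar\circ\adari(\fes)\circ\swap\circ\gaxit(\foz,\foz)\circ\mantar .
\]

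The first input is that $\adari(\fes)$ commutes with $\mantar$. For this one uses $\fes=\slash(\fess)=\gari(\neg(\fess),\invgari(\fess))$: since $\gari$, $\neg$ and $\invgari$ preserve symmetrality and $\fess\in\GARI_{\as}$ by Theorem \ref{thm:bisymmetral}, the bimould $\fes$ is symmetral, hence $\logari(\fes)\in\ARI_{\al}$ is alternal and therefore $\mantar$-invariant; since $\mantar$ preserves the $\ari$-bracket it commutes with $\adari(\fes)=\exp(\ad_{\logari(\fes)})$. Rewriting the left-hand side as $\mantar\circ\adari(\fes)\circ\swap\circ\Omantar$ and cancelling the common left factor $\mantar\circ\adari(\fes)$ and then $\swap$, what remains is the purely flexion-theoretic identity
\[
\Omantar=\gaxit(\foz,\foz)\circ\mantar,\qquad\text{i.e.}\qquad \gaxit(\foz,\foz)=\ganit(\foz)\circ\mantar\circ\ganit(\foz)^{-1}\circ\mantar .
\]

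To establish this I would feed the expansion $\gaxit(X,Y)=\ganit(Y)\circ\gamit(\ganit(Y)^{-1}(X))$ of \cite[Corollary 3.11]{kawamura25} (with $X=Y=\foz$) into the left side and cancel $\ganit(\foz)$, reducing it to $\gamit(\ganit(\foz)^{-1}(\foz))=\mantar\circ\ganit(\foz)^{-1}\circ\mantar$. The right side here can be recast, via the conjugation rule $\gamit(\anti(X))\circ\anti=\anti\circ\ganit(X)$ from the proof of Lemma \ref{lem:garit_os} together with the behaviour of $\ganit$ and $\gamit$ under the length-parity sign and the relation $\invgani(\foz)=(\pari\circ\anti)(\fos)$, as $\gamit$ applied to an explicit bimould built from $\foz$; the identity then comes down to an explicit computation pinning down $\ganit(\foz)^{-1}(\foz)$, for which the formulas $\invgani(\foz)=(\pari\circ\anti)(\fos)$, $\invgami(\fos)=\pari(\foz)$ and the explicit expression for $\fos$ recorded in the proof of Lemma \ref{lem:garit_os} are the available tools.

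I expect this final flexion computation --- identifying $\ganit(\foz)^{-1}(\foz)$ and matching the two $\gamit$-arguments --- to be the main obstacle; everything upstream of it is formal manipulation with involutions and the homomorphism property of $\adari$. A secondary point to be careful about is the claim that $\mantar$ is an automorphism, rather than an anti-automorphism, of $(\ARI,\ari)$, which is exactly what makes $\mantar\circ\adari(\fes)=\adari(\fes)\circ\mantar$ hold; this is in any case forced by $\mantar$ fixing the non-abelian alternal Lie subalgebra pointwise.
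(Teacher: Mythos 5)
Your proposal is correct and follows essentially the same route as the paper: both reduce the claim to the commutativity of $\adari(\fes)$ with $\mantar$ (via $\gantar$-invariance/symmetrality of $\fes$ and $\mantar$ preserving the $\ari$-bracket) together with the single identity $\Omantar=\gaxit(\foz,\foz)\circ\mantar$, which is then verified from the left--right separation of $\gaxit$, the conjugation rules for $\ganit$/$\gamit$ under $\anti$ and $\pari$, and the facts $\ganit(\foz)^{-1}(\foz)=\fos$ and $\invgani(\foz)=(\pari\circ\anti)(\fos)$. The final flexion computation you flag as the remaining obstacle is exactly the five-line chain the paper carries out with the tools you list, so nothing essential is missing.
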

\begin{proof}
    Using the left-right separation of $\gaxit$ (\cite[Corollary 3.11]{kawamura25}), we have
    \begin{align}
      \gaxit(\foz,\foz)\circ\mantar
      &=\ganit(\foz)\circ\gamit(\ganit(\foz)^{-1}(\foz))\circ\mantar\\
      &=\ganit(\foz)\circ\gamit(\fos)\circ\mantar\\
      &=\ganit(\foz)\circ\anti\circ\ganit(\anti(\fos))\circ\anti\circ\mantar\\
      &=-\ganit(\foz)\circ\anti\circ\ganit(\anti(\fos))\circ\pari\\
      &=-\ganit(\foz)\circ\anti\circ\pari\circ\ganit((\pari\circ\anti)(\fos))\\
      &=\ganit(\foz)\circ\mantar\circ\ganit(\invgani(\foz))\\
      &=\Omantar,
    \end{align}
    where we used the $\gantar$-invariance of $\fos$ (consequence of \cite[Propositions 5.3 (2), 2.14]{kawamura25}) and the fact that $\ganit((\pari\circ\anti)(\fos))$ gives the inverse of $\ganit(\foz)^{-1}$ (\cite[Proposition 5.3 (1)]{kawamura25}).
    Hence we obtain
    \begin{align}
      \Epush
      &=\Eneg\circ\mantar\circ\swap\circ\Omantar\circ\swap\\
      &=\neg\circ\adari(\fes)\circ\mantar\circ\swap\circ\gaxit(\foz,\foz)\circ\mantar\circ\swap\\
      &=\neg\circ\mantar\circ\adari(\fes)\circ\swap\circ\gaxit(\foz,\foz)\circ\mantar\circ\swap\\
      &=\neg\circ\mantar\circ\Eswap\circ\mantar\circ\swap
    \end{align}
    as desired, using the commutativity of $\adari(\fes)$ and $\mantar$, which is a consequence of the $\gantar$-invariance of $\fes$ and the fact that $\mantar$ preserves the $\ari$-bracket.
\end{proof}

\begin{lemma}\label{lem:preari_es}
    Let $B$ be an arbitrary element of $\ARI$.
    Then we have
    \[\preari(\fes,B)=\swamu(\fes,\mmu(\fes,B)-\answamu(\fes-1,B)).\]
\end{lemma}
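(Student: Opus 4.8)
The statement to be proved is the identity
\[\preari(\fes,B)=\swamu(\fes,\mmu(\fes,B)-\answamu(\fes-1,B))\]
for arbitrary $B\in\ARI$. The natural approach is a direct term-by-term comparison of both sides evaluated on a word $\bw=(w_1,\ldots,w_r)$, using the explicit formula for $\preari$ in terms of the flexion operations (from \cite{kawamura25}) together with the closed forms \eqref{eq:swamu} and \eqref{eq:answamu} established in Proposition \ref{prop:swamu_answamu}. Recall that $\preari(\fes,B)=\arit(\fes)(B)+\mmu(B,\fes)$ (or the analogous convention used in \cite{kawamura25}), and that $\arit(\fes)(B)(\bw)$ is a sum over the ways of writing $\bw$ as a concatenation $\ba\bc\bd$ of three (possibly empty, with constraints) subsequences, contributing terms of the shape $B(\ba\,\flr{\bc}\,\bd)\fes(\ful{\bc})$ and $B(\ba\,\fur{\bc}\,\bd)\fes(\bc\flr{\bd}\text{-type arguments})$ with signs. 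Since $\fes$ is concentrated in length $1$ away from the unit (i.e. $\fes=\fE$ on length-$1$ words, $\fes(\emp)=1$, and $\fes$ vanishes in all other lengths — this is how $\Eter$ was written), every flexion of $\fes$ appearing in these sums collapses to an evaluation $\fE$ of a single flexed letter, which drastically simplifies all three terms on the right-hand side as well.

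\emph{Key steps in order.} First I would fix the convention for $\preari$ from \cite{kawamura25} and write out $\preari(\fes,B)(\bw)$ explicitly, separating the $\mmu(B,\fes)$ piece from the two families of $\arit(\fes)$ terms; because $\fes$ is supported in length $\le 1$, the $\mmu(B,\fes)$ term gives only $B(\bw)$ plus $B(w_1,\ldots,w_{r-1})\fE(w_r)$-type contributions, and each $\arit$ family reduces to a single sum over a two-part splitting $\bw=\ba\bb$ (the middle subsequence $\bc$ being forced to have length $1$). Second, I would expand the right-hand side: compute $\mmu(\fes,B)(\bw)=B(\bw)+\fE(w_1)B(w_2,\ldots,w_r)$ and $\answamu(\fes-1,B)(\bw)=\sum_{\bw=\ba\bb,\ \ba\ne\emp}(\fes-1)(\ba\fur{\bb})B(\fll{\ba}\bb)$, which by length support of $\fes$ is just the single-letter term $\fE(w_1\fur{w_2\cdots w_r})B(\fll{w_1}(w_2\cdots w_r))$; subtract, then apply $\swamu(\fes,-)$ using \eqref{eq:swamu}, which again, by length support of the outer $\fes$, splits as the identity plus one extra term $\fE$-evaluated on a single flexed letter $\ful{\ba}\bb$ with $\ba=(w_1)$. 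Third, I would match the resulting finitely many term-types on the two sides, tracking the flexion bookkeeping (the $\lceil\,\rceil$ and $\lfloor\,\rfloor$ operations conjugating correctly under the nested splittings) and the signs.

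\emph{Main obstacle.} The genuine difficulty is purely combinatorial bookkeeping of the flexion markings: when $\swamu(\fes,-)$ is applied to a bimould that is itself a difference of an $\mmu$-product and an $\answamu$-product, the flexed arguments get nested (an outer $\ful{(w_1)}$ acting on words that already carry inner $\fll{\cdot}$ or $\fur{\cdot}$ marks coming from $\answamu$), and one must verify that these compose to exactly the single $\fur{}$/$\fll{}$ pattern appearing in the definition of $\Eter$ — in other words, that the flexion cocycle relations make the nested markings telescope. I expect this to be the step where care is needed, and it is most cleanly handled by evaluating everything on a generic word $\binom{u_1,\ldots,u_r}{v_1,\ldots,v_r}$ and checking the flexed arguments coordinate-wise, exactly as in the proof of Proposition \ref{prop:swamu_answamu}. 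Once the markings are verified to agree, the sign and support considerations are routine, and the identity follows.
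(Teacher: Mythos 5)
There is a genuine gap: your entire simplification rests on the claim that $\fes$ is ``concentrated in length $1$ away from the unit'', i.e.\ that effectively $\fes=1+\fE$. That is false. The length-$1$-supported object is the flexion unit $\fE$ itself (which is what appears in the definition of $\Eter$); the bimould $\fes$ is a different object, nonzero in every length. Indeed $\fes=\swap(\foz)$ with $\foz=\invmu(1-\fO)$, so $\foz\binom{u_{1},\ldots,u_{r}}{v_{1},\ldots,v_{r}}=\fO\binom{u_{1}}{v_{1}}\cdots\fO\binom{u_{r}}{v_{r}}$, and the paper repeatedly distinguishes $\fes$ from $1+\fE$ (for instance in Lemma~\ref{lem:eswap}, where $\mmu(1+\pari(\fes),B)$ and $\mmu(\fE,B)$ occur as genuinely different terms, and in Lemma~\ref{lem:ter_explicit}, where $\invmu(\fes)$ is a nontrivial mould). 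Once this premise is removed, none of the three terms on the right-hand side ``collapses'', the sums do not reduce to two-part splittings with a forced length-one middle block, and your term-matching plan has nothing left to match.

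What actually drives the paper's proof is the multiplicative (separative) behaviour of $\fes$ under flexions: identities such as $\fes(\ba\ful{\bb}\bc)=\fes(\ba\flr{\bc})\,\fes(\ful{\ba}\ful{\bb}\bc)$ and $\fes(\ful{\ba\fur{\bb}}\bc)=\fes(\ful{\ba\bb}\bc)$, which follow from the explicit product formula for $\fes$ in terms of $\fE$. The paper expands $\amit(B)(\fes)$ and $\anit(B)(\fes)$ over triple splittings $\bw=\ba\bb\bc$ (note the convention: $\preari(\fes,B)=\arit(B)(\fes)+\mmu(\fes,B)$, with $\fes$ in the \emph{argument} slot of $\arit(B)$, not $\arit(\fes)(B)+\mmu(B,\fes)$ as you wrote --- here it is $\fes$ that is evaluated on the long flexed words and $B$ on the inner piece), applies the factorizations above to those long flexed evaluations of $\fes$, and only then regroups the triple splittings into the double splittings defining $\swamu(\fes,\mmu(\fes,B))-\mmu(\fes,B)$ and $\swamu(\fes,\answamu(\fes-1,B))$ via Proposition~\ref{prop:swamu_answamu}. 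Your proposal identifies neither the correct support of $\fes$ nor this factorization mechanism, so as written it would not yield a proof of the stated identity.
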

\begin{proof}
    Using \cite[Remark 5.2]{kawamura25}, we have
    \begin{align}
        \amit(B)(\fes)(\bw)
        &=\sum_{\substack{\bw=\ba\bb\bc\\ \bb,\bc\neq\emp}}\fes(\ba\ful{\bb}\bc)B(\bb\flr{\bc})\\
        &=\sum_{\substack{\bw=\ba\bb\bc\\ \bb,\bc\neq\emp}}\fes(\ba\flr{\bc})\fes(\ful{\ba}\ful{\bb}\bc)B(\bb\flr{\bc})\\
        &=\sum_{\substack{\bw=\ba'\bc\\ \ba',\bc\neq\emp}}\mmu(\fes,B)(\ba'\flr{\bc})\fes(\ful{\ba'}\bc)\\
        &=\swamu(\fes,\mmu(\fes,B))(\bw)-\mmu(\fes,B)(\bw).
    \end{align}
    Similarly, it holds that
    \begin{align}
        \anit(B)(\fes)(\bw)
        &=\sum_{\substack{\bw=\ba\bb\bc\\ \ba,\bb\neq\emp}}\fes(\ba\fur{\bb}\bc)B(\fll{\ba}\bb)\\
        &=\sum_{\substack{\bw=\ba\bb\bc\\ \ba,\bb\neq\emp}}\fes(\ba\fur{\bb}\flr{\bc})\fes(\ful{\ba\fur{\bb}}\bc)B(\fll{\ba}\bb)\\
        &=\sum_{\substack{\bw=\ba\bb\bc\\ \ba,\bb\neq\emp}}\fes(\ba\fur{\bb}\flr{\bc})\fes(\ful{\ba\bb}\bc)B(\fll{\ba}\bb)\\
        &=\sum_{\substack{\bw=\ba'\bc\\ \ba'\neq\emp}}\fes(\ful{\ba'}\bc)\left(\answamu(\fes,B)(\ba'\flr{\bc})-B(\ba'\flr{\bc})\right)\\
        &=\swamu(\fes,\answamu(\fes-1,B))(\bw).
    \end{align}
    Therefore, by the definition of $\arit$ and $\preari$, we obtain
    \begin{align}
        \preari(\fes,B)
        &=\arit(B)(\fes)+\mmu(\fes,B)\\
        &=\amit(B)(\fes)-\anit(B)(\fes)+\mmu(\fes,B)\\
        &=\swamu(\fes,\mmu(\fes,B))-\swamu(\fes,\answamu(\fes-1,B))\\
        &=\swamu(\fes,\mmu(\fes,B)-\answamu(\fes-1,B)).
    \end{align}
\end{proof}

\begin{lemma}\label{lem:eswap}
    The inverse map of $\Eswap$ is given by
    \begin{align}
        B&\mapsto \mmu(\swap(\preari(\pari(\fes),B)),1+\fO)\\
        &=\mmu(\pari(\foz),\swap(\mmu(1+\pari(\fes),B)-\swap(\answamu(\pari(\fes),B))),1+\fO)\\
        &=\mmu(\pari(\foz),\mmu(\swap(B),1+\fO)+\swap(\answamu(\fE,B)-\mmu(\fE,B))).
    \end{align}
\end{lemma}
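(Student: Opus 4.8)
The plan is to invert $\Eswap$ one factor at a time and then recognise the result as the first displayed map; the other two forms will follow by routine rewriting. Since $\Eswap=\adari(\fes)\circ\swap\circ\gaxit(\foz,\foz)$ is a composite of $R$-linear bijections of $\ARI$, we have $\Eswap^{-1}=\gaxit(\foz,\foz)^{-1}\circ\swap\circ\adari(\fes)^{-1}$, so it suffices to simplify each factor in turn.

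First I would treat $\adari(\fes)^{-1}$. From $\adari(\gari(M,N))=\adari(M)\circ\adari(N)$ one gets $\adari(\fes)^{-1}=\adari(\invgari(\fes))$, and here $\invgari(\fes)=\pari(\fes)$: this is the $\gari$-analogue of the identity $\invgami(\fos)=\pari(\foz)$ used in the proof of Lemma \ref{lem:garit_os}, and it follows from the construction $\fes=\slash(\fess)$ together with $\pari$ being an automorphism for $\gari$ and $\pari(\fess)=\neg(\fess)$ (the latter because $\fE$ is an odd flexion unit). Feeding this into the third of the descriptions $\adari(M)(A)=\gari(\preari(M,A),\invgari(M))$ recalled in Section 2, and using $\invgari(\pari(\fes))=\fes$, gives
\[
  \adari(\fes)^{-1}(B)=\gari(\preari(\pari(\fes),B),\fes)=\fragari(\preari(\pari(\fes),B),\pari(\fes)).
\]

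Next I would transport $\swap$ inside via the first fundamental identity (Theorem \ref{thm:first}). Since $\crash$ of the swappee of $\pari(\fes)$ equals $\foz$ --- exactly as in the proof of Corollary \ref{cor:second}, where $\crash(\doss)=\crash(\foss)=\foz$ by \cite[Theorem 1.2]{kawamura25} --- this yields
\[
  (\swap\circ\adari(\fes)^{-1})(B)=\ganit(\foz)\bigl(\fragari(\swap(\preari(\pari(\fes),B)),\swap(\pari(\fes)))\bigr).
\]
Then I would apply $\gaxit(\foz,\foz)^{-1}=\gamit(\fos)^{-1}\circ\ganit(\foz)^{-1}$, using the left--right separation $\gaxit(\foz,\foz)=\ganit(\foz)\circ\gamit(\fos)$ obtained inside the proof of Lemma \ref{lem:epush}. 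The $\ganit(\foz)^{-1}$ cancels the $\ganit(\foz)$, and $\gamit(\fos)^{-1}$ applied to $\fragari(X,\swap(\pari(\fes)))$ should collapse to the right $\mmu$-multiplication $\mmu(X,1+\fO)$: this rests on $\pari(\foz)=\invmu(1+\fO)$ (so that $1+\fO$ and $\pari(\foz)$ are mutually $\mmu$-inverse), on $\gamit(\fos)^{-1}=\gamit(\pari(\foz))$, and on the identity $\ganit(\fos)\circ\gamit(\fos)^{-1}=\garit(\invmu(\fos))$ from Lemma \ref{lem:garit_os}. The outcome is precisely the first displayed expression $\mmu(\swap(\preari(\pari(\fes),B)),1+\fO)$.

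Finally, the second form is obtained from the first by inserting the expansion of $\preari(\pari(\fes),B)$ supplied by Lemma \ref{lem:preari_es} --- applied with $\fE$ replaced by $-\fE$, under which $\fes$ becomes $\pari(\fes)$ --- and then moving $\swap$ across $\swamu$ by \eqref{eq:swamu}, using $\swap(\pari(\fes))=\pari(\foz)$ and $\swap(\mmu(A,B))=\swamu(\swap(A),\swap(B))$. The third form comes from the second by rewriting with $\swap(\fE)=\fO$, $\mmu(\pari(\foz),1+\fO)=1$, and Proposition \ref{prop:swamu_answamu}, together with the observation that the components of $\pari(\fes)$ of length $\ge 2$ occurring in $\mmu(\pari(\fes),B)$ and in $\answamu(\pari(\fes),B)$ cancel against one another because $\fes$ is symmetral (Theorem \ref{thm:bisymmetral}). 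I expect this last cancellation, and keeping track of the flexion operations through the $\swap$-conjugations, to be the main obstacle; everything else is assembling identities already available above.
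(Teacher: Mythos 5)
Your overall plan --- invert $\Eswap$ factor by factor using $\adari(\fes)^{-1}=\adari(\pari(\fes))$ with $\invgari(\pari(\fes))=\fes$, then obtain the second and third forms from Lemma \ref{lem:preari_es} and Proposition \ref{prop:swamu_answamu} --- is the same skeleton as the paper's proof. But two of your pivotal steps are asserted rather than established, and one of them rests on the wrong mechanism.

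First, your transport of $\swap$ through $\adari(\fes)^{-1}$ via Theorem \ref{thm:first} requires $\crash(\pari(\foz))=\foz$, since the second argument of your $\fragari$ is $\pari(\fes)$ and its swappee is $\pari(\foz)$. You justify this ``exactly as in the proof of Corollary \ref{cor:second}'', but the identity available there is $\crash(\doss)=\crash(\foss)=\foz$ from \cite[Theorem 1.2]{kawamura25}, which concerns the secondary bimoulds, not $\pari(\foz)$; nothing in the paper gives you $\crash(\pari(\foz))$. The paper avoids this entirely: it observes that $(\push\circ\swap\circ\invmu\circ\swap)(\foz)=\foz$, so $\gaxit(\foz,\foz)=\girat(\foz)$, rewrites $\swap(\gari(X,\fes))=\mmu(\girat(\foz)(\swap(X)),\foz)$, and then carries out the explicit computation $\girat(\foz)^{-1}(\foz)=1+\fO$ (via $\gamit(\fos)^{-1}=\anti\circ\ganit(\anti(\fos))^{-1}\circ\anti$ and $\ganit(\foz)((\pari\circ\anti)(\fos))=1-\fO$). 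Your corresponding step --- that ``$\gamit(\fos)^{-1}$ applied to $\fragari(X,\swap(\pari(\fes)))$ should collapse to $\mmu(X,1+\fO)$'' --- is exactly the content of that computation, and you would still have to do it; listing the identities it ``rests on'' does not discharge it.

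Second, for the third displayed expression you invoke a cancellation of the length $\ge 2$ components of $\pari(\fes)$ ``because $\fes$ is symmetral (Theorem \ref{thm:bisymmetral})''. Symmetrality is not what drives this step, and Theorem \ref{thm:bisymmetral} is about $\fess$, not $\fes$. What the paper actually uses is the $\mmu$-inverse relation $\foz=\invmu(1-\fO)$, which gives $\mmu(\fO,\foz)=\foz-1$ and hence $\mmu(\fO,\pari(\foz))=1-\pari(\foz)$; combined with parts (1) and (3) of Proposition \ref{prop:swamu_answamu} this yields the two identities
\begin{equation}
  \swap(\mmu(1-\pari(\fes),B))=\mmu(\swap(\mmu(\fE,B)),\pari(\foz)),\qquad
  \swap(B)-\answamu(\pari(\fes),B)=\mmu(\swap(\answamu(\fE,B)),\pari(\foz)),
\end{equation}
whose sum is the required reduction. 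You should replace the symmetrality argument by this computation. The remaining ingredients of your sketch (the use of Lemma \ref{lem:preari_es} together with the distributivity of $\pari$ over $\preari$, $\mmu$, $\swamu$, $\answamu$) do match the paper.
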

\begin{proof}
    First, let us show $\Eswap(\mmu(\swap(\preari(\pari(\fes),B)),1+\fO))=B$.
    Since the identity
    \begin{align}
        (\push\circ\swap\circ\invmu\circ\swap)(\foz)
        &=(\push\circ\swap\circ\invmu)(\fes)\\
        &=(\push\circ\swap\circ\push)(\fes)\\
        &=\swap(\fes)\\
        &=\foz
    \end{align}
    holds, the operator $\gaxit(\foz,\foz)$ coincides with $\girat(\foz)$.
    Therefore, from \cite[Remark 3.20]{kawamura25}, we obtain
    \begin{align}
        (\swap\circ\adari(\fes)^{-1})(B)
        &=(\swap\circ\adari(\pari(\fes)))(B)\\
        &=\swap(\gari(\preari(\pari(\fes),B),(\invgari\circ\pari)(\fes)))\\
        &=\swap(\gari(\preari(\pari(\fes),B),\fes))\\
        &=\gira(\swap(\preari(\pari(\fes),B)),\swap(\fes))\\
        &=\gira(\swap(\preari(\pari(\fes),B)),\foz)\\
        &=\mmu(\girat(\foz)(\swap(\preari(\pari(\fes),B))),\foz).
    \end{align}
    Using the left-right separation of $\gaxit$ (\cite[Corollary 3.11]{kawamura25}) and $\ganit(\foz)^{-1}(\foz)=\fos$, we see that
    \begin{align}
        \girat(\foz)^{-1}(\foz)
        &=\gaxit(\foz,\foz)^{-1}(\foz)\\
        &=(\gamit(\ganit(\foz)^{-1}(\foz))^{-1}\circ\ganit(\foz)^{-1})(\foz)\\
        &=\gamit(\fos)^{-1}(\fos)\\
        &=(\anti\circ\ganit(\anti(\fos))^{-1}\circ\anti)(\fos)\\
        &=(\anti\circ\pari\circ\ganit((\pari\circ\anti)(\fos))^{-1}\circ\pari\circ\anti)(\fos)\\
        &=(\anti\circ\pari\circ\ganit(\invgani(\foz))^{-1}\circ\pari\circ\anti)(\fos)\\
        &=(\anti\circ\pari\circ\ganit(\foz)\circ\pari\circ\anti)(\fos).
    \end{align}
    Then we can use the identity $\ganit(\foz)((\pari\circ\anti)(\fos))=1-\fO$ which follows from $\invgani(\foz)=(\pari\circ\anti)(\fos)$ and finally we get $\girat(\foz)^{-1}(\foz)=1+\fO$.
    Therefore we have
    \begin{align}
        (\swap\circ\adari(\fes)^{-1})(B)
        &=\mmu(\girat(\foz)(\swap(\preari(\pari(\fes),B))),\girat(\foz)(1+\fO))\\
        &=\girat(\foz)(\mmu(\swap(\preari(\pari(\fes),B)),1+\fO)),
    \end{align}
    which is equivalent to $\Eswap(\mmu(\swap(\preari(\pari(\fes),B)),1+\fO))=B$.
    This shows the first expression.\\

    Next we prove the second expression.
    From Lemma \ref{lem:preari_es} and the fact that $\pari$ is distributive for $\preari$, we have
    \begin{align}
        &\mmu(\swap(\preari(\pari(\fes),B)),1+\fO)\\
        &=\mmu((\swap\circ\pari)(\preari(\fes,\pari(B))),1+\fO)\\
        &=\mmu((\swap\circ\pari)(\swamu(\fes,\mmu(\fes,\pari(B))-\answamu(\fes-1,\pari(B)))),1+\fO).
    \end{align}
    Moreover, since $\pari$ is also distributive for $\mmu$, $\swamu$ and $\answamu$, we have
    \begin{align}
        &\mmu(\swap(\preari(\pari(\fes),B)),1+\fO)\\
        &=\mmu(\swap(\swamu(\pari(\fes),\mmu(\pari(\fes),B)-\answamu(\pari(\fes)-1,B))),1+\fO)\\
        &=\mmu(\mmu((\swap\circ\pari)(\fes),\swap(\mmu(\pari(\fes),B)-\answamu(\pari(\fes)-1,B))),1+\fO)\\
        &=\mmu(\pari(\foz),\swap(\mmu(\pari(\fes),B)-\answamu(\pari(\fes)-1,B)),1+\fO)\\
        &=\mmu(\pari(\foz),\swap(\mmu(1+\pari(\fes),B)-\answamu(\pari(\fes),B)),1+\fO).
    \end{align}
    Finally we show the third expression. It is sufficient to show
    \begin{equation}\label{eq:swup_third_proof}
        \swap(\mmu(\pari(\fes),B))-\answamu(\pari(\fes),B)=\mmu(\swap(\answamu(\fE,B)-\mmu(\fE,B)),\pari(\foz)).
    \end{equation}
    More finely, we actually show the following two identities:
    \begin{equation}\label{eq:swup_third_proof1}
        \swap(\mmu(1-\pari(\fes),B))
        =\mmu(\swap(\mmu(\fE,B)),\pari(\foz)),
    \end{equation}
    and
    \begin{equation}\label{eq:swup_third_proof2}
        \swap(B)-\answamu(\pari(\fes),B)=\mmu(\swap(\answamu(\fE,B)),\pari(\foz)).
    \end{equation}
    For \eqref{eq:swup_third_proof1}, applying Proposition \ref{prop:swamu_answamu} (1) to the right-hand side, we obtain
    \begin{align}
        \mmu(\swap(\mmu(\fE,B)),\pari(\foz))
        &=\mmu(\swamu(\fO,\swap(B)),\pari(\foz))\\
        &=\swamu(\mmu(\fO,\pari(\foz)),\swap(B))\\
        &=\swamu(\pari(\mmu(-\fO,\foz)),\swap(B))\\
        &=\swamu(\pari(\mmu(-\fO,\invmu(1-\fO))),\swap(B))\\
        &=\swap(B)-\swamu((\pari\circ\invmu)(1-\fO),\swap(B))\\
        &=\swamu(1-\pari(\foz),\swap(B))\\
        &=\swap(\mmu(1-\pari(\fes),B)).
    \end{align}
    Let us give a proof of \eqref{eq:swup_third_proof2}.
    Considering the image under $\swap$ of the right-hand side, we have
    \[\swap(\mmu(\swap(\answamu(\fE,B)),\pari(\foz)))
        =\swamu(\answamu(\fE,B),\pari(\fes)).\]
        Then we can apply Proposition \ref{prop:swamu_answamu} (3) as
    \begin{align}
        \swap(\mmu(\swap(\answamu(\fE,B)),\pari(\foz)))
        &=\answamu(\swamu(\fE,\pari(\fes)),B)\\
        &=\answamu(\swap(\mmu(\fO,\pari(\foz))),B).
    \end{align}
    We can also compute the part $\mmu(\fO,\pari(\foz))$ in the same way as previous arguments, so we have
    \begin{align}
        \swap(\mmu(\swap(\answamu(\fE,B)),\pari(\foz)))
        &=\answamu(\swap(1-\pari(\foz)),B)\\
        &=B-\answamu(\pari(\fes),B).
    \end{align}
\end{proof}

\begin{corollary}\label{cor:epush_explicit}
    For any $C\in\ARI$, we have
    \begin{multline}
      \Epush^{-1}(C)\\
      =\swap(\mmu(\mmu(1-\fO,(\push\circ\swap)(C))+\push(\mmu(\fO,\swap(C)))-\push(\swamu(\swap(C),\fO)),\foz)).
    \end{multline}
\end{corollary}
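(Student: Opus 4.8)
The plan is to invert the factorisation of $\Epush$ given in Lemma \ref{lem:epush} and then substitute the explicit formula for $\Eswap^{-1}$ obtained in Lemma \ref{lem:eswap}.

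Since $\neg$, $\mantar$ and $\swap$ are involutions, the identity $\Epush=\neg\circ\mantar\circ\Eswap\circ\mantar\circ\swap$ of Lemma \ref{lem:epush} yields at once
\[\Epush^{-1}=\swap\circ\mantar\circ\Eswap^{-1}\circ\mantar\circ\neg.\]
Thus, writing $B\coloneqq(\mantar\circ\neg)(C)$, the problem reduces to computing $(\swap\circ\mantar)\bigl(\Eswap^{-1}(B)\bigr)$. For $\Eswap^{-1}$ I would use the third (most explicit) expression of Lemma \ref{lem:eswap}, so that $\Eswap^{-1}(B)$ becomes a finite combination of $\mmu$-, $\swamu$- and $\answamu$-products in $\fE$, $\fO$, $\pari(\foz)$ and $\swap(B)$, all to be carried through $\swap\circ\mantar$.

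The bulk of the work is the resulting simplification, for which the ingredients are: $\mantar$ is, up to a length-dependent sign, the order-reversal, hence an anti-homomorphism of $\mmu$ up to sign, it fixes the length-one units $\fE$ and $\fO$, and it turns $\pari(\foz)$ into $\anti(\foz)=\foz$ (the last equality because $\foz=\invmu(1-\fO)$ and $\anti$ fixes $1-\fO$), so that exactly one factor $\foz$ survives on the right, matching the shape $\mmu(-,\foz)$ of the target; $\swap$ commutes with $\neg$, intertwines $\mmu$ with $\swamu$ and (together with $\anti$) with $\answamu$, and exchanges $\fE\leftrightarrow\fO$ and $\fes\leftrightarrow\foz$; the three occurrences of $\swap(B)=(\swap\circ\mantar\circ\neg)(C)$ are rewritten using the relation $\neg\circ\push=\anti\circ\swap\circ\anti\circ\swap$ already invoked in the proof of Lemma \ref{lem:push_swamu_answamu}, which is exactly what produces the operator $\push$ standing in front of each of the three resulting summands; Lemma \ref{lem:push_swamu_answamu} is then used to convert $\answamu$ into $\swamu$ under $\push$; parts (1)--(3) of Proposition \ref{prop:swamu_answamu} are used to slide the unit factors $\fO$ and $1\pm\fO$ onto the correct side of each product; and the identity $\mmu(1-\fO,\foz)=1$ is used to normalise every stray $\foz$-, $\pari(\foz)$- or $\anti(\foz)$-factor to the single right-hand $\foz$. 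Collecting the three resulting terms gives $\mmu(1-\fO,(\push\circ\swap)(C))$, $\push(\mmu(\fO,\swap(C)))$ and $-\push(\swamu(\swap(C),\fO))$, each right-multiplied by $\foz$ and the whole wrapped in $\swap$, which is the asserted formula.

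The step I expect to be the genuine obstacle is this last computation: no individual move is difficult, but the combined bookkeeping — tracking the order-reversals and signs introduced by $\mantar$, the $\mmu\leftrightarrow\swamu\leftrightarrow\answamu$ conversions introduced by $\swap$ and $\push$, and the left/right placement of every flexion-unit factor — is delicate, and it is easy to accumulate a spurious $\anti$, $\pari$ or sign. The safest way to carry it out, and the way I would do it, is to descend to the sequence-level sum formulas \eqref{eq:swamu} and \eqref{eq:answamu} together with the defining sum for $\mmu$, where each identity invoked above becomes a transparent reindexing of decompositions $\bw=\ba\bb$ or $\bw=\ba\bb\bc$.
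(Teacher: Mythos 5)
Your proposal follows the paper's own route: invert Lemma \ref{lem:epush} to get $\Epush^{-1}=\swap\circ\mantar\circ\Eswap^{-1}\circ\mantar\circ\neg$, feed in the third expression of Lemma \ref{lem:eswap}, and simplify via $\mantar\circ\swap\circ\mantar\circ\swap=\neg\circ\push$ together with Proposition \ref{prop:swamu_answamu} and Lemma \ref{lem:push_swamu_answamu}, which is exactly what the paper does. The only cosmetic difference is your appeal to $\mmu(1-\fO,\foz)=1$, which the paper does not need since the single $\foz$ arising from $\pari(\foz)$ already lands on the right.
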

\begin{proof}
    From Lemma \ref{lem:epush} and the third expression of Lemma \ref{lem:eswap}, we have
    \begin{align}
        &(\swap\circ\Epush^{-1})(C)\\
        &=(\mantar\circ\Eswap^{-1}\circ\mantar\circ\neg)(C)\\
        &=\begin{multlined}[t]\mantar\Biggl(\mmu\biggl(\pari(\foz),\\\mmu\Bigl((\swap\circ\mantar\circ\neg)(C),1+\fO\Bigr)+\swap\Bigl(\answamu\bigl(\fE,(\mantar\circ\neg)(C)\bigr)\\
          -\mmu\bigl(\fE,(\mantar\circ\neg)(C)\bigr)\Bigr)\biggr)\Biggr)\end{multlined}\\
        &=\begin{multlined}[t]\mmu\Biggl(\mantar\biggl(\mmu\Bigl(\bigl(\swap\circ\mantar\circ\neg\bigr)(C),1+\fO\Bigr)\biggr)\\+(\mantar\circ\swap)\biggl(\answamu\Bigl(\fE,\bigl(\mantar\circ\neg\bigr)(C)\Bigr)-\mmu\Bigl(\fE,\bigl(\mantar\circ\neg\bigr)(C)\Bigr)\biggr),\foz\Biggr)\end{multlined}\\
        &=\begin{multlined}[t]
            \mmu\Biggl(\mmu\biggl(1-\fO,\bigl(\mantar\circ\swap\circ\mantar\circ\neg\bigr)(C)\biggr)\\+(\mantar\circ\swap\circ\anti\circ\swap)\biggl(\mmu\Bigl(\fO,\bigl(\swap\circ\anti\circ\mantar\circ\neg\bigr)(C)\Bigr)\biggr)\\
            -(\mantar\circ\swap)\biggl(\mmu\Bigl(\fE,\bigl(\mantar\circ\neg\bigr)(C)\Bigr)\biggr),\foz\Biggr)
        \end{multlined}.
    \end{align}
    Using the identity
    \[\mantar\circ\swap\circ\mantar\circ\swap=\neg\circ\push,\]
    we can deform the above calculation as
    \begin{align}
        &(\swap\circ\Epush^{-1})(C)\\
        &=\begin{multlined}[t]\mmu\biggl(\mmu\Bigl(1-\fO,(\push\circ\swap)(C)\Bigr)-(\pari\circ\neg\circ\push)\Bigl(\mmu(\fO,-(\swap\circ\pari\circ\neg)(C))\Bigr)\\
          -(\neg\circ\push\circ\swap\circ\mantar)\Bigl(\mmu(\fE,(\mantar\circ\neg)(C))\Bigr),\foz\biggr)\end{multlined}\\
        &=\mmu\Bigl(\mmu(1-\fO,(\push\circ\swap)(C))+\push(\mmu(\fO,\swap(C)))-(\push\circ\swap)(\mmu(C,\fE)),\foz\Bigr)\\
        &=\mmu\Bigl(\mmu(1-\fO,(\push\circ\swap)(C))+\push(\mmu(\fO,\swap(C)))-\push(\swamu(\swap(C),\fO)),\foz\Bigr).
    \end{align}
    This is what we wanted to show.
\end{proof}

\subsection{Another formulation of $\Esena$}
\begin{lemma}\label{lem:ter_explicit}
    For all $B\in\BIMU$, the following formulas are true.
    \begin{enumerate}
        \item $\Eter(B)=\mmu(B,1-\fE)+\answamu(B,\fE)$.
        \item $\Eter^{-1}(B)=\mmu(\answamu(B,\invmu(\fes)),\fes)$.
    \end{enumerate}
\end{lemma}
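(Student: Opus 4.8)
The plan is to prove (1) by directly unwinding definitions, and then to deduce (2) from (1) using associativity of $\mmu$ and $\answamu$, the mixed associativity of Proposition \ref{prop:swamu_answamu}(2), and one structural identity for $\fes$.

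\emph{Part (1).} The bimould $1-\fE$ is supported in lengths $0$ and $1$ (value $1$ at $\emp$ and $-\fE$ in length $1$), so the defining sum for $\mmu$ collapses to $\mmu(B,1-\fE)(w_1,\dots,w_r)=B(w_1,\dots,w_r)-B(w_1,\dots,w_{r-1})\fE(w_r)$, which is the sum of the first two terms of $\Eter(B)$. For the third term I apply formula \eqref{eq:answamu} and use that $\fE$ lives in length $1$: in $\answamu(B,\fE)(\bw)=\sum_{\bw=\ba\bb}B(\ba\fur{\bb})\fE(\fll{\ba}\bb)$ the only surviving decomposition for $\bw\neq\emp$ is $\ba=(w_1,\dots,w_{r-1})$, $\bb=(w_r)$, which gives exactly $B((w_1,\dots,w_{r-1})\fur{w_r})\fE(\fll{w_1,\dots,w_{r-1}}w_r)$. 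Adding these and treating $\bw=\emp$ trivially proves (1).

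\emph{Part (2).} First, $\Eter$ is a bijection of $\BIMU$: its defining formula shows that for each $\bw$, $(\Eter-\id)(B)(\bw)$ depends only on the values of $B$ on sequences of length $|\bw|-1$, so $\Eter(C)=B$ is solved uniquely by recursion on length. It therefore suffices to verify $\mmu(\answamu(\Eter(B),\invmu(\fes)),\fes)=B$ for all $B$. Using (1), write $\Eter(B)=B-\mmu(B,\fE)+\answamu(B,\fE)$ and apply $\answamu(-,\invmu(\fes))$; by Proposition \ref{prop:swamu_answamu}(2) (applicable since $\fE\in\ARI$) the middle term becomes $-\mmu(B,\answamu(\fE,\invmu(\fes)))$, and by the associativity of $\answamu$ (inherited from that of $\mmu$ through $\swap$ and $\anti$) the last term becomes $+\answamu(B,\answamu(\fE,\invmu(\fes)))$, so that
\[\answamu(\Eter(B),\invmu(\fes))=\answamu(B,\invmu(\fes))-\mmu(B,\answamu(\fE,\invmu(\fes)))+\answamu(B,\answamu(\fE,\invmu(\fes))).\]
The key is the identity $\answamu(1+\fE,\invmu(\fes))=1$, i.e.\ $\answamu(\fE,\invmu(\fes))=1-\invmu(\fes)$. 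Granting it, the first and third terms combine to $\answamu(B,\answamu(1+\fE,\invmu(\fes)))=\answamu(B,1)=B$, the middle term becomes $\mmu(B,1-\invmu(\fes))=B-\mmu(B,\invmu(\fes))$, hence $\answamu(\Eter(B),\invmu(\fes))=\mmu(B,\invmu(\fes))$ and $\mmu(\answamu(\Eter(B),\invmu(\fes)),\fes)=\mmu(B,\mmu(\invmu(\fes),\fes))=B$, which is (2).

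It remains to establish $\answamu(1+\fE,\invmu(\fes))=1$. Using $\invmu(\fes)=\push(\fes)$ (the identity already used in the proof of Lemma \ref{lem:eswap}) and comparing length-one components yields $\push(\fE)=-\fE$, hence $\push(1+\fE)=1-\fE$; then Lemma \ref{lem:push_swamu_answamu} reduces the claim to $\push(\swamu(\fes,1-\fE))=1$, i.e.\ to $\swamu(\fes,\fE)=\fes-1$. This last equality is nothing but the recursive (equivalently, explicit flexion-product) description of the secondary bimould $\fes$ attached to the flexion unit $\fE$ --- the exact analogue of the formula for $\fos$ recorded in Lemma \ref{lem:garit_os} --- and can be cited from \cite{kawamura25} or checked in one line from that formula. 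I expect the only delicate point to be the bookkeeping of which associativity is in force at each step (the plain ones for $\mmu$ and $\answamu$, versus the mixed ones of Proposition \ref{prop:swamu_answamu}), together with the observation that $1\pm\fE\notin\ARI$, so the $\fE$-summand must be separated off before Proposition \ref{prop:swamu_answamu}(2) can be invoked; the genuine content is carried entirely by $\swamu(\fes,\fE)=\fes-1$.
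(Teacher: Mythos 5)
Your proof is correct; part (1) matches the paper's (one-line) argument, and for part (2) you take a route that is organized differently from the paper's but rests on the same underlying fact. The paper computes the inverse of the map $\Phi\colon B\mapsto\mmu(\answamu(B,\invmu(\fes)),\fes)$ directly: it identifies the $\answamu$-inverse of $\invmu(\fes)$ as $1+\fE$ (via the $\gantar$-invariance of $\fes$ and $(\pari\circ\anti\circ\swap\circ\invmu)(\foz)=1+\fE$) and then matches $\Phi^{-1}$ with the formula from part (1) using Proposition \ref{prop:swamu_answamu}(2). You instead verify $\Phi\circ\Eter=\id$ and appeal to bijectivity of $\Eter$; your length-recursion argument for that bijectivity is fine, and your algebra with Proposition \ref{prop:swamu_answamu}(2) and the associativity of $\answamu$ is correct. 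Your key identity $\answamu(1+\fE,\invmu(\fes))=1$ is exactly the statement that $1+\fE$ is the $\answamu$-inverse of $\invmu(\fes)$, i.e.\ the same input the paper uses. Your derivation of it through $\push$ is more roundabout than necessary but valid: $\invmu(\fes)=\push(\fes)$ and $\push(\fE)=-\fE$ are legitimately available (both appear in, or follow from, the proof of Lemma \ref{lem:eswap}), and the residual identity $\swamu(\fes,\fE)=\fes-1$, which you leave to a citation, does hold --- it is immediate from $\swamu(\fes,\fE)=\swap(\mmu(\foz,\fO))$ together with $\foz=\invmu(1-\fO)$, whence $\mmu(\foz,\fO)=\foz-1$; these are the same facts invoked in Lemmas \ref{lem:garit_os} and \ref{lem:eswap}. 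Two small terminological slips that do not affect correctness: $\fes$ is not the \emph{secondary} bimould (that is $\fess$), and your identity is the $\swamu$-analogue, not the exact analogue, of the formula $\ganit(\fos)(\fO)=\fos-1$ recorded in Lemma \ref{lem:garit_os}.
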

\begin{proof}
    The first assertion is a consequence of \eqref{eq:answamu}.  
  We prove that the inverse map of $B\mapsto\mmu(\answamu(B,\invmu(\fes)),\fes)$ is given by $\Eter$.
  That inverse becomes
  \begin{align}
    B\mapsto &\answamu(\mmu(B,\invmu(\fes)),(\anti\circ\swap\circ\invmu\circ\swap\circ\anti\circ\invmu)(\fes))\\
    &=\answamu(\mmu(B,(\pari\circ\anti)(\fes)),(\anti\circ\swap\circ\invmu\circ\swap\circ\pari)(\fes))\\
    &=\answamu(\mmu(B,(\pari\circ\anti)(\fes)),(\pari\circ\anti\circ\swap\circ\invmu)(\foz))\\
    &=\answamu(\mmu(B,(\pari\circ\anti)(\fes)),1+\fE)\\
    &=\answamu(\mmu(B,(\pari\circ\anti)(\fes)-1),1+\fE)+B+\answamu(B,\fE),
  \end{align}
  where we used the $\gantar$-invariance of $\fes$.
  So it suffices to show
  \[\answamu(\mmu(B,(\pari\circ\anti)(\fes)-1),1+\fE)=-\mmu(B,\fE),\]
  but this is deduced from Proposition \ref{prop:swamu_answamu} (2) as
  \begin{align}
    &\answamu(\mmu(B,(\pari\circ\anti)(\fes)-1),1+\fE)\\
    &=\mmu(B,\answamu((\pari\circ\anti)(\fes)-1,1+\fE))\\
    &=\mmu(B,(\anti\circ\swap)(\mmu((\swap\circ\anti\circ\pari\circ\anti)(\fes)-1,(\swap\circ\anti)(1+\fE))))\\
    &=\mmu(B,(\anti\circ\swap)(\mmu((\pari\circ\swap)(\fes)-1,1+\fO)))\\
    &=\mmu(B,(\anti\circ\swap)(\mmu(\invmu(1+\fO)-1,1+\fO)))\\
    &=\mmu(B,(\anti\circ\swap)(-\fO))\\
    &=-\mmu(B,\fE).
  \end{align}
\end{proof}

\begin{corollary}[{\cite[(3.56)]{ecalle11}}]
    The inverse map of $\Eter$ is explicitly given by the formula
    \[\Eter^{-1}(B)(\bw)=\sum_{\bw=\ba\bb\bc}B(\ba\fur{\bb})\invmu(\fes)(\fll{\ba}\bb)\fes(\bc),\]
    which is valid for any $B\in\BIMU$.
\end{corollary}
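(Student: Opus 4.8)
The plan is to obtain the formula by simply unwinding the closed expression for $\Eter^{-1}$ established in Lemma~\ref{lem:ter_explicit}~(2), namely
\[\Eter^{-1}(B)=\mmu(\answamu(B,\invmu(\fes)),\fes),\]
into the sum-over-decompositions description of the two products occurring in it. So the corollary is a purely formal consequence of that lemma together with the defining formula for $\answamu$.

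Concretely, I would first recall that $\mmu$ is the plain concatenation product of bimoulds, so that for any sequence $\bw$ and any bimould $X$ one has $\mmu(X,\fes)(\bw)=\sum_{\bw=\bd\bc}X(\bd)\,\fes(\bc)$. Applying this with $X=\answamu(B,\invmu(\fes))$ yields
\[\Eter^{-1}(B)(\bw)=\sum_{\bw=\bd\bc}\answamu(B,\invmu(\fes))(\bd)\,\fes(\bc).\]
Next I would expand the inner term with the formula \eqref{eq:answamu} of Proposition~\ref{prop:swamu_answamu}, which gives $\answamu(B,\invmu(\fes))(\bd)=\sum_{\bd=\ba\bb}B(\ba\fur{\bb})\,\invmu(\fes)(\fll{\ba}\bb)$. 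Substituting $\bd=\ba\bb$ merges the two decompositions into a single sum over ordered factorizations $\bw=\ba\bb\bc$, producing exactly
\[\Eter^{-1}(B)(\bw)=\sum_{\bw=\ba\bb\bc}B(\ba\fur{\bb})\,\invmu(\fes)(\fll{\ba}\bb)\,\fes(\bc).\]
Since Lemma~\ref{lem:ter_explicit}~(2) holds on all of $\BIMU$, so does the displayed identity, which is the assertion.

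There is essentially no obstacle here beyond bookkeeping; the only point deserving a word is the locality of the flexion markings under refinement of a decomposition, i.e.\ that the entries $\ba\fur{\bb}$ and $\fll{\ba}\bb$ computed inside $\bd=\ba\bb$ are unchanged when $\bd$ is regarded as a prefix of the longer word $\bw=\ba\bb\bc$. This is immediate from the explicit meaning of the operations $\fur{}$ and $\fll{}$ (the last letter of $\ba$ absorbs the sum of the $u$-parts of $\bb$, and every $v$-part of $\bb$ is shifted by the last $v$-part of $\ba$), each of which involves only $\ba$ and $\bb$. Hence the substitution $\bd=\ba\bb$ is legitimate and the corollary follows.
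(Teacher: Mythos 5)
Your proposal is correct and is exactly the paper's argument: the paper likewise deduces the corollary from Lemma \ref{lem:ter_explicit} (2) together with the expansion \eqref{eq:answamu} of $\answamu$ and the concatenation formula for $\mmu$, merging the two decompositions into a single sum over $\bw=\ba\bb\bc$. Your extra remark on the locality of the flexion markings under refinement is a fair point of care but does not change the route.
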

\begin{proof}
    This follows from Lemma \ref{lem:ter_explicit} and \eqref{eq:answamu}.
\end{proof}

\begin{corollary}
    For any $B\in\ARI$, the bimould $\Esena(B)$ has an explicit expression as
    \begin{equation}\label{eq:Esena_explicit}
    \Esena(B)
        =\mmu((\swap\circ\push^{-1})(\mmu(\foz,\swap(B+\mmu(\fE,B)-\swamu(B,\fE)))),\fes).
    \end{equation}
\end{corollary}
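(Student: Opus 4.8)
The plan is to unwind the definition $\Esena = \Eter^{-1}\circ\push\circ\mantar\circ\Eter\circ\mantar$ using the closed forms for $\Eter$ and $\Eter^{-1}$ from Lemma \ref{lem:ter_explicit}, and then to simplify the $\mantar$- and $\push$-conjugations by means of Proposition \ref{prop:swamu_answamu}, Lemma \ref{lem:push_swamu_answamu} and the standard flexion identities.

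The first step is to work out the composite $\mantar\circ\Eter\circ\mantar$. Since $\mantar = -\pari\circ\anti$ (equivalently $\anti\circ\mantar = -\pari$, as used in the proof of Lemma \ref{lem:epush}), $\mantar$ is up to the length sign the antipode $\anti$; hence $\mantar(\mmu(A,C)) = -\mmu(\mantar(C),\mantar(A))$ and $\mantar(\answamu(A,C)) = -\swamu(\mantar(A),\mantar(C))$, using that $\pari$ distributes over $\mmu$, $\swamu$, $\answamu$, that $\anti$ is an anti-automorphism of $\mmu$, and that $\anti\circ\swamu = \answamu\circ(\anti\times\anti)$. Together with $\mantar(1-\fE) = -(1+\fE)$ and $\mantar(\fE) = \fE$, feeding the formula of Lemma \ref{lem:ter_explicit}(1) through these rules yields the clean identity
\[(\mantar\circ\Eter\circ\mantar)(B) = B + \mmu(\fE,B) - \swamu(B,\fE),\]
whose right-hand side is exactly the bimould $X := B+\mmu(\fE,B)-\swamu(B,\fE)$ occurring in the statement. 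Hence $\Esena(B) = \Eter^{-1}(\push(X))$, and Lemma \ref{lem:ter_explicit}(2) turns this into $\Esena(B) = \mmu(\answamu(\push(X),\invmu(\fes)),\fes)$.

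It then remains to prove the operator identity $\answamu(\push(X),\invmu(\fes)) = (\swap\circ\push^{-1})(\mmu(\foz,\swap(X)))$. Rewriting $\swap\circ\push^{-1} = \anti\circ\swap\circ\anti\circ\neg$ via $\neg\circ\push = \anti\circ\swap\circ\anti\circ\swap$, I would distribute $\neg$ over $\mmu$, slide $\anti$ across $\mmu$ as an anti-automorphism, and convert $\swap\circ\mmu$ into $\swamu$ and then $\anti\circ\swamu$ into $\answamu$. Using that $\neg$ commutes with both $\swap$ and $\push$, the first argument collapses to $\anti\circ\swap\circ\anti\circ\swap\circ\neg(X) = \neg\circ\push\circ\neg(X) = \push(X)$, while the second becomes $\push(\swap(\foz)) = \push(\fes)$, which equals $\invmu(\fes)$ by the identity $\invmu(\fes) = \push(\fes)$ already invoked in the proof of Lemma \ref{lem:eswap}. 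This gives the desired identity, hence the formula \eqref{eq:Esena_explicit}.

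The main obstacle is the sign- and order-bookkeeping when commuting $\mantar$ and $\push$ past the three asymmetric products $\mmu$, $\swamu$, $\answamu$: one must keep straight which of $\fE/\fes$, $\fO/\foz$ sits in which slot, and be careful with the $(-1)^{\mathrm{length}}$ built into $\mantar$, which for instance forces $\mantar(1-\fE) = -(1+\fE)$ rather than $1-\fE$ and produces the compensating signs in the two product rules above. Everything else is a direct assembly of the lemmas already in hand.
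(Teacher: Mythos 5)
Your proposal is correct and follows essentially the same route as the paper: unwind $\Esena$ via the closed forms in Lemma \ref{lem:ter_explicit}, compute $(\mantar\circ\Eter\circ\mantar)(B)=B+\mmu(\fE,B)-\swamu(B,\fE)$, and convert $\answamu(\push(\cdot),\invmu(\fes))$ into $(\swap\circ\push^{-1})(\mmu(\foz,\swap(\cdot)))$ using $\neg\circ\push=\anti\circ\swap\circ\anti\circ\swap$. The only cosmetic difference is that you identify the constant factor through $\push(\fes)=\invmu(\fes)$ and $\swap(\foz)=\fes$, whereas the paper cites $(\neg\circ\anti\circ\swap\circ\anti)(\invmu(\fes))=\foz$ directly.
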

\begin{proof}
    Let $B$ be an arbitrary bimould in $\ARI$.
    From two equations in Lemma \ref{lem:ter_explicit}, we have
    \begin{align}
        &\Esena(B)\\
        &=(\Eter^{-1}\circ\push\circ\mantar\circ\Eter\circ\mantar)(B)\\
        &=\mmu(\answamu((\push\circ\mantar\circ\Eter\circ\mantar)(B),\invmu(\fes)),\fes)\\
        &=\begin{multlined}[t]\mmu\biggl((\anti\circ\swap)(\mmu((\swap\circ\anti\circ\push\circ\mantar\circ\Eter\circ\mantar)(B),\\(\swap\circ\anti)(\invmu(\fes)))),\fes\biggr)
        \end{multlined}
    \end{align}
    Using the identity
    \[\anti\circ\swap\circ\anti\circ\swap=\neg\circ\push,\]
    we see that the above computation becomes
    \begin{align}
        &\Esena(B)\\
        &=\begin{multlined}[t]
          \mmu\Biggl((\swap\circ\push^{-1}\circ\neg\circ\anti)\biggl(\mmu\Bigl((\neg\circ\anti\circ\swap\circ\mantar\circ\Eter\circ\mantar)(B),\\
          (\swap\circ\anti)(\invmu(\fes))\Bigr)\biggr),\fes\Biggr)
        \end{multlined}\\
        &=\begin{multlined}[t]
          \mmu\biggl((\swap\circ\push^{-1})(\mmu((\neg\circ\anti\circ\swap\circ\anti)(\invmu(\fes)),\\
          (\swap\circ\mantar\circ\Eter\circ\mantar)(B))),\fes\biggr).
        \end{multlined}
    \end{align}
    The factor $(\neg\circ\anti\circ\swap\circ\anti)(\invmu(\fes))$ appearing above coincides with $\foz$ due to \cite[Proposition 5.3 (1)]{kawamura25}.
    We can compute the part above which depends on $B$ as
    \begin{align}
        &(\mantar\circ\Eter\circ\mantar)(B)\\
        &=\mantar(\mantar(B)-\mmu(\mantar(B),\fE)+\answamu(\mantar(B),\fE))\\
        &=B+\mmu(\fE,B)+(\mantar\circ\anti)(\swamu((\anti\circ\mantar)(B),\anti(\fE)))\\
        &=B+\mmu(\fE,B)+\pari(\swamu(\pari(B),\fE))\\
        &=B+\mmu(\fE,B)-\swamu(B,\fE).
    \end{align}
    Summarizing these computation, we obtain \eqref{eq:Esena_explicit}.
\end{proof}

\subsection{Proof of Theorem \ref{thm:357}}
\begin{lemma}
    Theorem \ref{thm:357} is equivalent to that, for all $B\in\ARI$ the identity
    \begin{equation}\label{eq:357rephrase}
        \ORush(\mmu(\fO,B)+\mmu(1-\fO,(\swap\circ\Esena\circ\swap)(B)))=\mmu(B,1-\fO)
\end{equation}
    holds, where the operator $\ORush$ is defined as
    \[X\mapsto\mmu(1-\fO,\push(X))+\push(\mmu(\fO,X))-\push(\swamu(X,\fO)).\]
\end{lemma}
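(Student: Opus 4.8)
The plan is to eliminate the operator $\ORush$ from \eqref{eq:357rephrase} by recognizing it as a repackaging of $\Epush^{-1}$, and then to match the resulting $\ORush$-free statement against Theorem~\ref{thm:357} transported through $\swap$. The starting point is the observation that Corollary~\ref{cor:epush_explicit} says exactly
\[
  \Epush^{-1}(C)=\swap\bigl(\mmu(\ORush(\swap(C)),\foz)\bigr),
\]
since the bimould sitting inside the outermost factor $\mmu(-,\foz)$ there is literally $\ORush(\swap(C))$. Applying $\swap$, right-multiplying by $1-\fO$, and invoking the relation $\invmu(\foz)=1-\fO$ (equivalently $\mmu(1-\fO,\foz)=\mmu(\foz,1-\fO)=1$, already used in the proof of Corollary~\ref{cor:epush_explicit}), this rearranges into
\[
  \ORush(X)=\mmu\bigl(\swap(\Epush^{-1}(\swap(X))),\,1-\fO\bigr).
\]

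Feeding $X=\mmu(\fO,B)+\mmu(1-\fO,(\swap\circ\Esena\circ\swap)(B))$ into this identity and then cancelling the outer $\mmu(-,1-\fO)$ (again via $\invmu(1-\fO)=\foz$) together with the bijection $\swap\circ\Epush^{-1}\circ\swap$ (recall $\Epush$ is invertible by Corollary~\ref{cor:epush_explicit}), one sees that \eqref{eq:357rephrase} is equivalent to $\mmu(\fO,B)+\mmu(1-\fO,(\swap\circ\Esena\circ\swap)(B))=(\swap\circ\Epush\circ\swap)(B)$, hence, substituting $B\mapsto\swap(B)$ (an involution on $\ARI$), to
\[
  (\star)\qquad \mmu(\fO,\swap(B))+\mmu(1-\fO,(\swap\circ\Esena)(B))=(\swap\circ\Epush)(B)\quad(\forall B\in\ARI).
\]
On the other side, applying $\swap$ to the identity \eqref{eq:357} of Theorem~\ref{thm:357} and using $\swap(\swamu(A,C))=\mmu(\swap(A),\swap(C))$ together with $\swap(\fes)=\foz$, Theorem~\ref{thm:357} is seen to be equivalent to
\[
  (E')\qquad \swap(B)-(\swap\circ\Esena)(B)=\mmu(\foz,\swap(B))-\mmu(\foz,(\swap\circ\Epush)(B))\quad(\forall B\in\ARI).
\]
It then remains to check $(\star)\iff(E')$, which is a short computation in the associative algebra $(\BIMU,\mmu)$: left-multiplying $(\star)$ by $\foz$ and invoking $\mmu(\foz,1-\fO)=1$ and $\mmu(\foz,\fO)=\foz-1$ (the latter being the difference of $\mmu(\foz,1)=\foz$ and the former) produces $(E')$ after rearranging, while left-multiplying $(E')$ by $1-\fO$ and invoking $\mmu(1-\fO,\foz)=1$ runs the step backwards. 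Chaining the equivalences \eqref{eq:357rephrase}$\iff(\star)\iff(E')\iff$Theorem~\ref{thm:357} finishes the argument.

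The only points needing attention are bookkeeping ones. First, one must confirm that every manipulation is a genuine equivalence rather than a one-way implication; this is automatic once one notes that $\swap$, $\Epush$, and left $\mmu$-multiplication by the mutually $\mmu$-inverse units $\foz$ and $1-\fO$ are all bijections on the relevant spaces. Second, one must keep careful track of the constant terms so that the relations $\mmu(\foz,1-\fO)=1$ and $\mmu(\foz,\fO)=\foz-1$ are applied to the correct factors. I do not expect a genuine obstacle: the substantive content has already been isolated in Corollary~\ref{cor:epush_explicit}, and the present lemma is merely the dictionary that converts the $\Epush$/$\Esena$ bookkeeping of Theorem~\ref{thm:357} into the single $\mmu$-algebra identity \eqref{eq:357rephrase}, which is what the remainder of the section will then verify.
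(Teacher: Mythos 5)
Your proposal is correct and follows essentially the same route as the paper: both arguments rest on reading Corollary \ref{cor:epush_explicit} as the identity $\ORush(X)=\mmu\bigl(\swap(\Epush^{-1}(\swap(X))),1-\fO\bigr)$ and then unwinding \eqref{eq:357} through $\swap$ and the mutually $\mmu$-inverse factors $\foz$ and $1-\fO$. The only difference is organizational (the paper first inverts $\swamu(\fes,\cdot)$ and then applies $\swap\circ\Epush^{-1}$, whereas you pass through your intermediate identities $(\star)$ and $(E')$), and your bookkeeping of the equivalences is sound.
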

\begin{proof}
    Since the inverse about $\swamu$ is given by $\swap\circ\invmu\circ\swap$, we see that \eqref{eq:357} is equivalent to
    \[\swamu(1-\fE,B-\Esena(B))=B-\Epush(B),\]
    which is moreover equivalent to
    \[(\swap\circ\Epush^{-1})(B-\swamu(1-\fE,B-\Esena(B)))=\swap(B).\]
    Using Lemma \ref{lem:epush}, we rewrite the left-hand side above as
    \begin{align}
        &(\swap\circ\Epush^{-1})(B-\swamu(1-\fE,B-\Esena(B)))\\
        &=(\swap\circ\Epush^{-1})(\swamu(\fE,B)+\swamu(1-\fE,\Esena(B)))\\
        &=(\swap\circ\Epush^{-1}\circ\swap)(\mmu(\fO,\swap(B))+\mmu(1-\fO,(\swap\circ\Esena)(B))).
    \end{align}
    Thus it suffices to show that, for any $C\in\ARI$ the identity
    \begin{equation}\label{eq:357rephrase_rest}
        \mmu((\swap\circ\Epush^{-1}\circ\swap)(C),1-\fO)=\ORush(C)
    \end{equation}
    holds.
    Using the definition of $\ORush$, we can rewrite the statement of Corollary \ref{cor:epush_explicit} as
    \[(\swap\circ\Epush^{-1})(C)=\mmu((\ORush\circ\swap)(C),\foz).\]
    By replacing $C$ by $\swap(C)$ here, the identity \eqref{eq:357rephrase_rest} immediately follows.
\end{proof}

\begin{proposition}
    The identity \eqref{eq:357rephrase} is valid for any $B\in\ARI$, and hence Theorem \ref{thm:357} holds.
\end{proposition}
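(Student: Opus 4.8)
By the preceding lemma it is enough to prove \eqref{eq:357rephrase}; the passage from \eqref{eq:357rephrase} to Theorem~\ref{thm:357} is then automatic. Both operators occurring in \eqref{eq:357rephrase} are explicit: $\ORush$ is so by definition, and $\swap\circ\Esena\circ\swap$ becomes so once we substitute \eqref{eq:Esena_explicit}. Carrying out that substitution, with $\swap\circ\mmu=\swamu\circ(\swap\times\swap)$, $\swap(\fes)=\foz$, $\swap(\fE)=\fO$ and $\swap\circ\swap=\id$, one finds
\begin{equation*}
  (\swap\circ\Esena\circ\swap)(B)=\swamu\bigl(\push^{-1}(\mmu(\foz,\,B+\swamu(\fO,B)-\mmu(B,\fO))),\ \foz\bigr),
\end{equation*}
so that \eqref{eq:357rephrase} reads $\ORush(Y)=\mmu(B,1-\fO)$ for a completely explicit bimould $Y$. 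The plan is therefore to expand $\ORush(Y)=\mmu(1-\fO,\push Y)+\push(\mmu(\fO,Y))-\push(\swamu(Y,\fO))$ and simplify it to $\mmu(B,1-\fO)$.

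The simplification proceeds with the following tools: Proposition~\ref{prop:swamu_answamu}(1)--(3), to commute $\mmu$, $\swamu$ and $\answamu$ past one another; Lemma~\ref{lem:push_swamu_answamu}, to turn each $\push$ of a $\swamu$-product into an $\answamu$ of pushed arguments (so $\push(\swamu(Y,\fO))=\answamu(\push\fO,\push Y)$); the flexion-unit relations $\mmu(1-\fO,\foz)=\mmu(\foz,1-\fO)=1$ and $\mmu(\fO,\foz)=\mmu(\foz,\fO)=\foz-1$ coming from $\foz=\invmu(1-\fO)$; and the length-one evaluation $\push(\fO)=\neg(\fO)=-\fO$ --- the first equality because $\neg\circ\push=\anti\circ\swap\circ\anti\circ\swap$ reduces to the identity in length one, the second by the parity of a flexion unit. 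One must keep track of the ``$\in\ARI$'' hypothesis in Proposition~\ref{prop:swamu_answamu}(1)--(3); it is always met, since $B\in\ARI$ and since $\push$, $\mmu(\fO,-)$ and $\swamu(\fO,-)$ preserve $\ARI$. (Conceptually it is worth noting that, via the identity \eqref{eq:357rephrase_rest} recorded in the lemma and the injectivity of right $\mmu$-multiplication by $1-\fO$, the statement \eqref{eq:357rephrase} is equivalent to $\swamu(\fE,A)+\swamu(1-\fE,\Esena(A))=\Epush(A)$ for all $A\in\ARI$, which in turn --- on applying the $\swamu$-inverse $\swamu(\fes,-)$ of $\swamu(1-\fE,-)$, legitimate because $\swamu$ is associative with unit $1$ and $\swamu(1-\fE,\fes)=1$ --- is a restatement of \eqref{eq:357}; but for the actual verification the direct route above is more economical, since $\ORush$ is free of $\neg$, $\mantar$ and $\anti$.)

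The hard part will be the bookkeeping in the core computation. Since $\push$ is a homomorphism for neither $\mmu$ nor $\swamu$, the only leverage on it is Lemma~\ref{lem:push_swamu_answamu} together with the particular shape of $\ORush$, and the whole point is to order the rewriting so that the single $\push^{-1}$ carried by $Y$ (inherited from $\Esena$) is cancelled precisely against a $\push$ emerging from the summands $\push(\mmu(\fO,Y))$ and $\push(\swamu(Y,\fO))$ of $\ORush(Y)$. Once that cancellation is in place, the remaining $\mmu$-, $\swamu$- and $\answamu$-terms telescope, by repeated use of $\mmu(\fO,\foz)=\foz-1$, down to $\mmu(B,1-\fO)$, which is exactly \eqref{eq:357rephrase} and completes the proof of Theorem~\ref{thm:357}.
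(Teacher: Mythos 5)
Your reduction is set up correctly: the passage to \eqref{eq:357rephrase} is indeed handled by the preceding lemma, and your formula $(\swap\circ\Esena\circ\swap)(B)=\swamu(\push^{-1}(\mmu(\foz,B')),\foz)$ with $B'=B+\swamu(\fO,B)-\mmu(B,\fO)$ agrees with the one the paper extracts from \eqref{eq:Esena_explicit}. But what follows is a plan rather than a proof: the core verification is exactly the ``bookkeeping'' you defer, and it is not a routine telescoping. More importantly, your declared toolkit cannot close the computation, because it omits the one genuinely non-formal input: the tripartite relation of the flexion unit $\fO$. The identities you list --- Proposition \ref{prop:swamu_answamu}, Lemma \ref{lem:push_swamu_answamu}, $\mmu(\fO,\foz)=\foz-1$, $\push(\fO)=-\fO$ --- hold for \emph{any} odd length-one bimould $\fO$ with $\foz=\invmu(1-\fO)$; they never relate differently flexed evaluations of $\fO$ to one another, whereas \eqref{eq:357rephrase} is specific to flexion units.

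The paper's proof makes this visible. Writing $\ORush=R_{1}-R_{2}+R_{3}-R_{4}$ with $R_{1}=\push$, $R_{2}=\mmu(\fO,\push(\cdot))$, $R_{3}=\push(\mmu(\fO,\cdot))$, $R_{4}=\push(\swamu(\cdot,\fO))$, and decomposing $(\swap\circ\Esena\circ\swap)(B)=A_{1}+A_{2}=A_{11}+A_{12}+A_{21}+A_{22}$, the formal identities you cite do yield $R_{1}(\mmu(\fO,A_{1}+A_{2}))=R_{3}(A_{1}+A_{2})$, $R_{1}(A_{21})=R_{2}(A_{2})$ and $R_{1}(A_{12})=R_{4}(A_{2})$. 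But the two remaining cancellations --- $R_{1}(A_{11})-R_{4}(A_{1})=R_{2}(A_{1})$ and $(-R_{2}+R_{3}-R_{4})(\mmu(\fO,M))=0$ for every bimould $M$ --- are each established by an explicit componentwise computation in which a sum of products of two flexed values of $\fO$ is collapsed using the tripartite identity. Without supplying those two steps (and the flexion-unit axiom that powers them), the argument does not go through; as written, your proposal asserts the conclusion of the hard part rather than proving it.
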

\begin{proof}
  Throughout this proof, we write
  \[B'\coloneqq B-\mmu(B,\fO)+\swamu(\fO,B)\]
  for simplicity.
    Put
    \begin{align}
        R_{1}(X)&\coloneqq\push(X),\\
        R_{2}(X)&\coloneqq\mmu(\fO,\push(X)),\\
        R_{3}(X)&\coloneqq\push(\mmu(\fO,X))
    \end{align}
    and
    \[R_{4}(X)\coloneqq\push(\swamu(X,\fO))\]
    for any bimould $X$, so that $\sum_{i=1}^{4}(-1)^{i-1}R_{i}=\ORush$ holds.
    Note that Lemma \ref{lem:push_swamu_answamu} shows another expression
    \[R_{4}(X)=-\answamu(\fO,\push(X)).\]
    As our goal is to show \eqref{eq:357rephrase},
    we furthermore consider a decomposition of the term $(\swap\circ\Esena\circ\swap)(B)$: first remember the expression
    \[(\swap\circ\Esena\circ\swap)(B)=\swamu(\push^{-1}(\mmu(\foz,B')),\foz),\]
    which is an easy corollary of \eqref{eq:Esena_explicit}.
    Then we put
    \begin{align}
        A_{1}&\coloneqq \swamu(\push^{-1}(\mmu(\foz,B')),\mmu(\fO,\foz)),\\
        A_{11}&\coloneqq \swamu(\push^{-1}(\mmu(\foz,B')),\mmu(\fO,\fO,\foz)),\\
        A_{12}&\coloneqq \swamu(\push^{-1}(\mmu(\foz,B')),\fO),\\
        A_{2}&\coloneqq \push^{-1}(\mmu(\foz,B')),\\
        A_{21}&\coloneqq \push^{-1}(\mmu(\foz-1,B')),\\
    \end{align}
    and
    \[A_{22}\coloneqq\push^{-1}(B'),\]
    so that $A_{1}=A_{11}+A_{12}$, $A_{2}=A_{21}+A_{22}$ and
    \[A_{1}+A_{2}=A_{11}+A_{12}+A_{21}+A_{22}=(\swap\circ\Esena\circ\swap)(B)\]
    due to the fact $\mmu(\fO,\foz)=\foz-1$.
    By definition, we can check that
    \begin{equation}\label{eq:3_1_2}
      R_{1}(\mmu(\fO,A_{1}+A_{2})) = R_{3}(A_{1}+A_{2})
    \end{equation}
    Moreover we have
    \[ R_{1}(A_{21}) = R_{2}(A_{2}), \]
    by the identity $\mmu(\fO,\foz)=\foz-1$, and
    \[ R_{1}(A_{12}) = R_{4}(A_{2}),\]
    by Lemma \ref{lem:push_swamu_answamu}.
    These equations is combined as
    \begin{equation}\label{eq:24_2}
      R_{1}(A_{12}+A_{21})=(R_{2}+R_{4})(A_{2}),
    \end{equation}
    and it shows that, together with \eqref{eq:3_1_2},
    \begin{equation}\label{eq:first_step}
      (-R_{2}+R_{3}-R_{4})(A_{1}+A_{2})-R_{1}(\mmu(\fO,A_{1}+A_{2}))
      =(-R_{2}-R_{4})(A_{1})-R_{1}(A_{12}+A_{21}).
    \end{equation}
    On the other hand, Lemma \ref{lem:push_swamu_answamu} also shows
    \begin{align}
      R_{1}(A_{11})-R_{4}(A_{1})
      &=\begin{multlined}[t]
        \push(\swamu(\push^{-1}(\mmu(\foz,B')),\mmu(\fO,\fO,\foz)))\\
        +\answamu(\fO,\push(\swamu(\push^{-1}(\mmu(\foz,B')),\mmu(\fO,\foz))))
      \end{multlined}\\
      &=\begin{multlined}[t]
        \answamu(\push(\mmu(\fO,\fO,\foz)),\mmu(\foz,B'))\\
        +\answamu(\fO,\answamu(\push(\mmu(\fO,\foz)),\mmu(\foz,B')))
      \end{multlined}\\
      &=\begin{multlined}[t]
        -\answamu(\swamu(\fO,\mmu(\fO,\foz)),\mmu(\foz,B'))\\
        -\answamu(\fO,\answamu(\swamu(\fO,\foz),\mmu(\foz,B'))),
      \end{multlined}
    \end{align}
    where we used the relation $\push(\fO,M)=-\swamu(\fO,M)$ which holds for any bimould $M$.
    Hence we obtain
    \begin{align}
      &(R_{1}(A_{11})-R_{4}(A_{1}))(w_{1},\ldots,w_{r})\\
      &=\begin{multlined}[t]-\sum_{w_{1}\cdots w_{r}=\ba\bb}\swamu(\fO,\mmu(\fO,\foz))(\ba\fur{\bb})\mmu(\foz,B')(\fll{\ba}\bb)\\
        -\fO(w_{1}\fur{w_{2}\cdots w_{r}})\sum_{\fll{w_{1}}(w_{2}\cdots w_{r})=\ba'\bb}\swamu(\fO,\foz)(\ba'\fur{\bb})\mmu(\foz,B')(\fll{\ba'}\bb)\end{multlined}\\
        &=\begin{multlined}[t]-\sum_{w_{2}\cdots w_{r}=\ba'w_{i}\bb}\swamu(\fO,\mmu(\fO,\foz))(w_{1}\ba'(w_{i}\fur{\bb}))\mmu(\foz,B')(\fll{w_{i}}\bb)\\
        -\fO(w_{1}\fur{w_{2}\cdots w_{r}})\sum_{w_{2}\cdots w_{r}=\ba'w_{i}\bb}\swamu(\fO,\foz)(\fll{w_{1}}(\ba'w_{i}\fur{\bb}))\mmu(\foz,B')(\fll{w_{i}}\bb)\end{multlined}\\
        &=\begin{multlined}[t]-\sum_{w_{2}\cdots w_{r}=\ba'w_{i}\bb}\fO(\ful{w_{1}\ba'}w_{i}\fur{\bb})\fO(w_{1}\flr{w_{i}})\foz(\ba'\flr{w_{i}})\mmu(\foz,B')(\fll{w_{i}}\bb)\\
        -\fO(w_{1}\fur{w_{2}\cdots w_{r}})\sum_{w_{2}\cdots w_{r}=\ba'w_{i}\bb}\fO(\fll{w_{1}}\ful{\ba'}w_{i}\fur{\bb})\foz(\ba'\fur{w_{i}})\mmu(\foz,B')(\fll{w_{i}}\bb)\end{multlined}\\
        &=-\sum_{w_{2}\cdots w_{r}=\ba'w_{i}\bb}\left(\fO(\ful{w_{1}}\ful{\ba'}w_{i}\fur{\bb})\fO(w_{1}\flr{w_{i}})+\fO(w_{1}\fur{w_{2}\cdots w_{r}})\fO(\fll{w_{1}}\ful{\ba'}w_{i}\fur{\bb})\right)
          \foz(\ba'\flr{w_{i}})\mmu(\foz,B')(\fll{w_{i}}\bb)
    \end{align}
    due to Proposition \ref{prop:swamu_answamu}.
    Applying the tripartite relation, we have
    \begin{align}
      &(R_{1}(A_{11})-R_{4}(A_{1}))(w_{1},\ldots,w_{r})\\
      &=-\sum_{w_{2}\cdots w_{r}=\ba'w_{i}\bb}\fO(w_{1})\fO(\ful{\ba'}w_{i}\fur{\bb})\foz(\ba'\flr{w_{i}})\mmu(\foz,B')(\fll{w_{i}}\bb)\\
      &=-\fO(w_{1})\sum_{w_{2}\cdots w_{r}=\ba'w_{i}\bb}\swamu(\fO,\foz)(\ba'w_{i}\fur{\bb})\mmu(\foz,B')(\fll{w_{i}}\bb)\\
      &=-\fO(w_{1})\answamu(\swamu(\fO,\foz),\mmu(\foz,B'))(w_{2},\ldots,w_{r})\\
      &=\mmu(\fO,\answamu(\push(\mmu(\fO,\foz)),\mmu(\foz,B')))(w_{1},\ldots,w_{r})\\
      &=\mmu(\fO,\push(\swamu(\push^{-1}(\mmu(\foz,B')),\mu(\fO,\foz))))(w_{1},\ldots,w_{r})\\
    \end{align}
    and thus
    \begin{equation}
      R_{1}(A_{11})-R_{4}(A_{1})=R_{2}(A_{1}).
    \end{equation}
    Using this identity and \eqref{eq:first_step}, we deduce that
    \begin{align}\label{eq:second_step}
      \begin{split}
      &(-R_{2}+R_{3}-R_{4})(A_{1}+A_{2})-R_{1}(\mmu(\fO,A_{1}+A_{2}))\\
      &=(-R_{2}-R_{4})(A_{1})-R_{1}(A_{12}+A_{21})\\
      &=-R_{1}(A_{11})-R_{1}(A_{12}+A_{21})\\
      &=-R_{1}(A_{1}+A_{2}-A_{22}).
      \end{split}
    \end{align}
    Furthermore, for any bimould $M$, we have
    \begin{align}
      &(-R_{2}+R_{3}-R_{4})(\mmu(\fO,M))(w_{1},\ldots,w_{r})\\
      &=(-\mmu(\fO,\push(\mmu(\fO,M)))+\push(\mmu(\fO,\fO,M))+\answamu(\fO,\push(\mmu(\fO,M))))(w_{1},\ldots,w_{r})\\
      &=(\mmu(\fO,\swamu(\fO,M))-\swamu(\fO,\mmu(\fO,M))-\answamu(\fO,\swamu(\fO,M)))(w_{1},\ldots,w_{r})\\
      &=\begin{multlined}[t]\fO(w_{1})\fO(\ful{w_{2}\cdots w_{r-1}}w_{r})M((w_{2}\cdots w_{r-1})\flr{w_{r}})-\fO(\ful{w_{1}\cdots w_{r-1}}w_{r})\fO(w_{1}\flr{w_{r}})M((w_{2}\cdots w_{r-1})\flr{w_{r}})\\-\fO(w_{1}\fur{w_{2}\cdots w_{r}})\fO(\fll{w_{1}}\ful{w_{2}\cdots w_{r-1}}w_{r})M((w_{2}\cdots w_{r-1})\flr{w_{r}})\end{multlined}\\
      &=\begin{multlined}[t](\fO(w_{1})\fO(\ful{w_{2}\cdots w_{r-1}}w_{r})-\fO(\ful{w_{1}\cdots w_{r-1}}w_{r})\fO(w_{1}\flr{w_{r}})-\fO(w_{1}\fur{w_{2}\cdots w_{r}})\fO(\fll{w_{1}}\ful{w_{2}\cdots w_{r-1}}w_{r}))\\\cdot M((w_{2}\cdots w_{r-1})\flr{w_{r}}),\end{multlined}
    \end{align}
    which becomes $0$ by the tripartite identity.
    In particular, one has
    \begin{equation}
      (-R_{2}+R_{3}-R_{4})(\mmu(\fO,B))=0
    \end{equation}
    and 
    \begin{equation}
      (-R_{2}+R_{3}-R_{4})(\mmu(\fO,A_{1}+A_{2}))=0.
    \end{equation}
    From these two identities, we show
    \begin{align}
      &\ORush(\mmu(\fO,B)+\mmu(1-\fO,(\swap\circ\Esena\circ\swap)(B)))\\
      &=(R_{1}-R_{2}+R_{3}-R_{4})(\mmu(\fO,B)+\mmu(1-\fO,A_{1}+A_{2}))\\
      &=R_{1}(\mmu(\fO,B))+(R_{1}-R_{2}+R_{3}-R_{4})(A_{1}+A_{2})-R_{1}(\mmu(\fO,A_{1}+A_{2})).
    \end{align}\\
    Applying \eqref{eq:second_step} here, finally we get
    \begin{align}
      &\ORush(\mmu(\fO,B)+\mmu(1-\fO,(\swap\circ\Esena\circ\swap)(B)))\\
      &=R_{1}(\mmu(\fO,B))+R_{1}(A_{22})\\
      &=\push(\mmu(\fO,B))+B'\\
      &=-\swamu(\fO,B)+B-\mmu(B,\fO)+\swamu(\fO,B)\\
      &=\mmu(B,1-\fO).
    \end{align}
    which exactly shows \eqref{eq:357rephrase}.
\end{proof}

\section{From $\push$-invariants to senary relation}
This section is devoted to give a proof of Theorem \ref{thm:push_senary}.

\begin{lemma}\label{lem:irat_mantar}
    For every $X\in\LU$, we have
    \[\irat(\mantar(X))\circ\mantar=\mantar\circ\irat(\push^{-1}(X)).\]
\end{lemma}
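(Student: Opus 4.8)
The plan is to prove the identity by unwinding the defining combinatorial sums of both sides and matching them after the word reversal that $\mantar$ performs. Recall that $\irat(Y)$, like $\arit(Y)$, is assembled from sums over the ways of splitting the input sequence $\bw=\ba\bb\bc$ into three (possibly partly empty) blocks, with $Y$ evaluated on the middle block $\bb$ suitably flexed by its neighbours and the outer blocks reassembled using the complementary flexions $\ful{\cdot},\fll{\cdot},\fur{\cdot},\flr{\cdot}$. Since $\mantar$ is, up to a length-dependent sign, $\pari\circ\anti$, i.e.\ a signed reversal, conjugating such a flexed-deconcatenation sum by $\mantar$ reverses the word and the order of the blocks; this turns a left-anchored cut into a right-anchored one and conversely, and interchanges $\ful{\cdot}\leftrightarrow\fur{\cdot}$ and $\fll{\cdot}\leftrightarrow\flr{\cdot}$ through the elementary identities $\overleftarrow{\ful{\ba}\bb}=(\overleftarrow{\bb})\fur{\overleftarrow{\ba}}$ and $\overleftarrow{\ba\flr{\bb}}=\fll{\overleftarrow{\bb}}(\overleftarrow{\ba})$ already exploited in the proof of Proposition~\ref{prop:swamu_answamu}.

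Concretely I would (1)~expand $(\mantar\circ\irat(\push^{-1}(X)))(A)(\bw)$ into its defining sum, which up to the overall sign is $\irat(\push^{-1}(X))(A)$ evaluated at $\overleftarrow{\bw}$; (2)~expand $(\irat(\mantar(X))\circ\mantar)(A)(\bw)$, so that $\mantar(X)$ contributes an evaluation of $X$ on the reversed, flexed middle block and $\mantar(A)$ a reversed evaluation of $A$ on the reassembled outer part; (3)~in one of the two sums perform the substitution $\bw\mapsto\overleftarrow{\bw}$ together with the block relabelling $\overleftarrow{\bc}\mapsto\ba$, $\overleftarrow{\ba}\mapsto\bc$, and check, using the two flexion identities above, that all flexion decorations and all $A$-factors agree and that the signs coming from $\mantar$ on $X$, on $A$ and on the ambient word cancel. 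After this bijection the only terms not matched verbatim are those in which the contracted block reaches the far left (or far right, or all) of $\bw$; it is precisely on these boundary terms that $X$ must be traded for $\push^{-1}(X)$, because the reversal carries the ``first-letter'' end of $\bw$ --- the end at which $\push$ inserts the closure entry $-(u_1+\cdots+u_r)$ with its trailing flexion --- to the opposite end, and writing out the definition of $\push$ on such a boundary block identifies $X$ on the original boundary block with $\push^{-1}(X)$ on the reversed one. The degenerate cases $\ba=\emp$, $\bb=\emp$, $\bc=\emp$ and $r\in\{0,1\}$ have to be inspected separately, since an element of $\LU$ need not vanish in low length.

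The main obstacle is exactly this bookkeeping of the \emph{boundary} terms. The operator $\irat$ splits into a \emph{bulk} part (contracted window strictly inside $\bw$), which transforms under conjugation by $\mantar$ by the clean reversal above with $X$ left untouched, and boundary parts (window flush with the left end, the right end, or covering all of $\bw$); under reversal the left-end and right-end pieces get exchanged but each carries one flexion that does not cancel and must be absorbed into $\push^{-1}(X)$. Pinning down this correspondence --- in particular verifying that the closure produced by $\mantar\circ(\text{right-boundary window})\circ\mantar$ is governed by $\push^{-1}$ rather than $\push$, and that the three sources of signs cancel --- is the delicate point; everything else is routine rewriting of the type in Proposition~\ref{prop:swamu_answamu}. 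As a structural shortcut I would try in parallel to deduce the statement from the known intertwining rules of $\arit$ with the flexion automorphisms: express $\irat$ through $\arit$ via $\swap$ (as $\preira$ is the $\swap$-transport of $\preari$), use the behaviour of $\arit$ under $\anti$ together with $\neg\circ\push=\anti\circ\swap\circ\anti\circ\swap$ and $\mantar=\pm\,\pari\circ\anti$, and chain these; if $\arit$ has clean conjugation rules under $\anti$ and $\swap$, this produces the factor $\push^{-1}$ conceptually and sidesteps the case analysis.
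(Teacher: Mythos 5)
Your overall strategy --- expand both sides into flexed-deconcatenation sums, conjugate by the signed reversal, and track how the boundary windows generate the $\push^{-1}$ --- is workable in principle, but as written it stops exactly where the content of the lemma lies. You yourself flag the verification that the reversed boundary windows are governed by $\push^{-1}$ rather than $\push$, and that the three sources of sign cancel, as ``the delicate point,'' and no argument is actually supplied for it. That step is not routine bookkeeping: producing $\push^{-1}$ on the nose, with the correct signs, \emph{is} the assertion of the lemma, so the proposal is an outline with its decisive step missing. The concluding ``structural shortcut'' is conditional (``if $\arit$ has clean conjugation rules\dots'') and likewise not carried out.

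The idea that closes this gap --- and the one the paper's proof consists of --- is the left--right separation $\irat(X)=\axit(X,-\push(X))$. This single formula already packages the ``bulk versus boundary'' decomposition you are attempting by hand: the two slots of $\axit$ are precisely the two families of flexion windows, and conjugation by $\mantar=-\pari\circ\anti$ exchanges the two slots (the reversal $\anti$ interchanges the $\amit$-type and $\anit$-type pieces), applying $\mantar$ to each argument. With that, the lemma reduces to comparing the argument pair of $\irat(\mantar(X))$, namely $(\mantar(X),-\push(\mantar(X)))$, with the slot-exchange of the pair $(\push^{-1}(X),-\push(\push^{-1}(X)))=(\push^{-1}(X),-X)$ attached to $\irat(\push^{-1}(X))$; this is a two-line check using only the standard fact that conjugating the cyclic operator $\push$ by the reversal inverts it. No term-by-term matching and no separate treatment of degenerate splittings is needed. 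I would therefore recommend replacing the combinatorial expansion by this one identity, which turns your ``delicate point'' into the trivial part of the argument.
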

\begin{proof}
    It immediately follows from the expression $\irat(X)=\axit(X,-\push(X))$.
\end{proof}
\begin{lemma}\label{lem:axit_mantar}
    Let $X\in\MU$ be a $\gantar$-invariant bimould and $A,B\in\LU$,
    Then we have
    \[\axit(A,B)(X)=\mmu(X,(\axit(A,B)\circ\mantar)(X),X).\]
\end{lemma}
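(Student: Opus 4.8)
The plan is to prove the identity by a direct flexion computation, expanding both sides and matching term by term. Recall first that $\axit(A,B)$ is the sum of an $\amit$-type operator and an $\anit$-type operator; I will use the convention $\axit(A,B)=\amit(A)+\anit(B)$ (the precise sign convention does not affect the argument). Their explicit forms — recalled, for the argument $\fes$, in the proof of Lemma~\ref{lem:preari_es} above, and valid for any bimould by linearity — are
\[
\amit(A)(X)(\bw)=\sum_{\substack{\bw=\bp\bb\bc\\ \bb,\bc\neq\emp}}X(\bp\ful{\bb}\bc)\,A(\bb\flr{\bc}),
\qquad
\anit(B)(X)(\bw)=\sum_{\substack{\bw=\bp\bb\bc\\ \bp,\bb\neq\emp}}X(\bp\fur{\bb}\bc)\,B(\fll{\bp}\bb),
\]
where $\bw=(w_{1},\ldots,w_{r})$. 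On the right-hand side I would expand $\mmu(X,Y,X)(\bw)=\sum_{\bw=\bq_{1}\bq_{2}\bq_{3}}X(\bq_{1})\,Y(\bq_{2})\,X(\bq_{3})$ with $Y=(\axit(A,B)\circ\mantar)(X)$, substituting the two formulas above for $\amit(A)(\mantar X)(\bq_{2})$ and $\anit(B)(\mantar X)(\bq_{2})$; this turns the right-hand side into a sum over a refined decomposition of $\bw$.

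The next step is to remove $\mantar X$. Writing $\mantar=-\pari\circ\anti$, each value of $\mantar X$ occurring inside $\axit(A,B)(\mantar X)(\bq_{2})$ is $\pm X$ evaluated on a reversed, flexed subword of $\bq_{2}$; the $\gantar$-invariance of $X$ then rewrites this as a value of $X$ on a forward (un-reversed) subword, at the cost of exactly the flexions prescribed by the letters of $\bw$ adjacent to $\bq_{2}$. After this substitution, every summand on the right-hand side is a product of three values of $X$ — one from $\bq_{1}$, one (rewritten) from inside $\bq_{2}$, one from $\bq_{3}$ — times a single value of $A$ or of $B$.

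The combinatorial heart is then to see that these three $X$-values glue into one. One exhibits a bijection between, on the one hand, the data of a decomposition $\bw=\bq_{1}\bq_{2}\bq_{3}$ together with the inner cut of $\bq_{2}$, and on the other hand a decomposition $\bw=\bp\bb\bc$ of the type appearing in $\amit(A)$ (respectively $\anit(B)$); under it the $A$-factor $A(\bb\flr{\bc})$ (respectively the $B$-factor $B(\fll{\bp}\bb)$) is preserved, the product of the three $X$-values equals $X(\bp\ful{\bb}\bc)$ (respectively $X(\bp\fur{\bb}\bc)$) by the flexion--$\mmu$ compatibility relations in the spirit of Proposition~\ref{prop:swamu_answamu} together with the tripartite identity for $\fE$, and the sign $(-1)^{|\bq_{2}|-1}$ produced by $\mantar$ is absorbed. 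Summing the $\amit$- and $\anit$-contributions then gives the lemma. (A slightly more structural variant: first establish the twisted Leibniz rules expressing $\amit(A)(\mmu(U,V))$ and $\anit(B)(\mmu(U,V))$ through $\mmu$ plus flexion-correction terms, then specialise to $\mmu(X,\,\cdot\,,X)$ and combine with the $\gantar$-invariance of $X$; the underlying bookkeeping is the same.)

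The step I expect to be the main obstacle is precisely this flexion bookkeeping: after the reversal built into $\mantar$ has been traded against the $\gantar$-invariance of $X$, one must check that the $u$-partial-sums and $v$-differences attached to the two junctions $\bq_{1}\mid\bq_{2}$ and $\bq_{2}\mid\bq_{3}$ agree on the nose with those of the single flexed word $\bp\ful{\bb}\bc$ (respectively $\bp\fur{\bb}\bc$), with no summand double-counted or dropped at the boundaries. This is the same flavour of manipulation as the long displays in the proof of Theorem~\ref{thm:357} — lengthy, but not conceptually delicate.
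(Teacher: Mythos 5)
Your plan has a genuine gap, and the route you sketch would not go through as described. The key point you are missing is that $\axit(A,B)$ is an honest $\mmu$-derivation (this is \cite[Proposition 2.2.1]{schneps15}, no ``flexion-correction terms'' needed). Given that, the lemma is a three-line formal computation: apply $\axit(A,B)$ to $1=\mmu(X,\invmu(X))$ to get
\[0=\mmu\bigl(\axit(A,B)(X),\invmu(X)\bigr)+\mmu\bigl(X,(\axit(A,B)\circ\invmu)(X)\bigr),\]
multiply on the right by $X$, and substitute $\mantar(X)=-\invmu(X)$, which is exactly what $\gantar$-invariance gives for $X\in\MU$. Your parenthetical ``more structural variant'' is pointing at this, but you posit twisted Leibniz rules with correction terms rather than the clean derivation property, and you never actually carry out either version.

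The direct term-by-term matching that forms the body of your proposal has two concrete defects. First, you claim that $\gantar$-invariance ``rewrites'' a value of $\mantar X$ on a reversed flexed subword ``as a value of $X$ on a forward subword.'' It does not: $\gantar$-invariance identifies $\mantar(X)$ with $-\invmu(X)$, and a value of $\invmu(X)$ is an alternating sum of products of values of $X-1$ over all decompositions of the word, not a single value of $X$. Consequently your ``three $X$-values glue into one'' step has no basis, and the bijection of decompositions you describe cannot preserve the summands. Second, you invoke ``the tripartite identity for $\fE$'' to make the gluing work, but the statement involves no flexion unit at all --- $X$ is an arbitrary $\gantar$-invariant element of $\MU$ and $A,B$ are arbitrary elements of $\LU$ --- so the tripartite identity is unavailable and irrelevant here. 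The part you flag as ``the main obstacle'' is in fact where the argument breaks, and it is not repairable by more careful bookkeeping along the lines you describe; the derivation property is the missing idea.
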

\begin{proof}
    Since $\axit(A,B)$ is a $\mmu$-derivation (\cite[Proposition 2.2.1]{schneps15}), we see that
    \begin{align}
        0&=\axit(A,B)(1)\\
        &=\axit(A,B)(\mmu(X,\invmu(X)))\\
        &=\mmu\left(\axit(A,B)(X),\invmu(X)\right)+\mmu(X,(\axit(A,B)\circ\invmu)(X))
    \end{align}
    holds.
    Since $\mantar(X)=-\invmu(X)$ due to the $\gantar$-invariance of $X$, one has
    \begin{align}
        \axit(A,B)(X)
        &=\mmu(-\mmu(X,(\axit(A,B)\circ\invmu)(X)),X)\\
        &=\mmu(X,-(\axit(A,B)\circ\invmu)(X),X)\\
        &=\mmu(X,(\axit(A,B)\circ\mantar)(X),X).
    \end{align}
\end{proof}

\begin{lemma}\label{lem:garit_mantar}
    When $X\in\MU$ is $\gantar$-invariant, the operator $\garit(X)$ commutes with $\mantar$.
\end{lemma}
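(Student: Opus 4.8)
The plan is to push the statement down to the linear level and then invoke Lemma~\ref{lem:irat_mantar}. First I would record that $\garit(X)$ is the exponential of the $\mmu$-derivation $\arit(x)$, where $x\coloneqq\logari(X)$: since $\preari$ linearizes $\gari$ (\cite[Remark~3.16]{kawamura25}) and $\gari(A,X)=\mmu(\garit(X)(A),X)$ by definition of $\gari$, differentiating this last identity along the $\gari$-one-parameter subgroup through $X$ gives $\tfrac{d}{dt}\garit(\expari(tx))=\arit(x)\circ\garit(\expari(tx))$, hence $\garit(X)=\exp(\arit(x))$ as operators on $\MU$. The exponential converges on each length-graded piece by the same length-filtration estimate that makes $\adari$ well-defined, because $x\in\Fil^{\ge1}$ and $\arit(x)$ strictly raises the minimal length. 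Now $\mantar$ is an involutive $\mmu$-anti-automorphism, so $\mantar\circ D\circ\mantar$ is again a $\mmu$-derivation for every $\mmu$-derivation $D$, and $\mantar\circ\exp(D)\circ\mantar=\exp(\mantar\circ D\circ\mantar)$; it therefore suffices to show that $\arit(x)$ commutes with $\mantar$.

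This is where the hypothesis enters. The $\gantar$-invariance of $X$ — which at the level of $\MU$ reads $\mantar(X)=-\invmu(X)$, exactly as used in the proof of Lemma~\ref{lem:axit_mantar} — should force $x=\logari(X)$ to be both $\push$-invariant and $\mantar$-invariant. Granting this, $\irat(x)=\axit(x,-\push(x))=\axit(x,-x)=\arit(x)$, the first equality being the formula for $\irat$ used in Lemma~\ref{lem:irat_mantar} and the last the description of $\arit(y)$ as $\axit(y,-y)$. Rewriting Lemma~\ref{lem:irat_mantar} by substituting $\push^{-1}$ of its argument yields the conjugation identity $\mantar\circ\irat(y)\circ\mantar=\irat(\mantar(\push(y)))$ valid for all $y\in\LU$; applied with $y=x$, and combined with $\irat(x)=\arit(x)$ and the $\push$- and $\mantar$-invariance of $x$, it gives
\[
\mantar\circ\arit(x)\circ\mantar=\irat(\mantar(\push(x)))=\irat(\mantar(x))=\irat(x)=\arit(x),
\]
which is the desired commutation. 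Composing once more with $\mantar$ produces $\mantar\circ\garit(X)=\garit(X)\circ\mantar$ on $\MU$, and restricting to the ideal of bimoulds vanishing on the empty word gives it on $\LU$ as well.

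The step I expect to be the genuine obstacle — everything else being formal — is the translation between the $\gantar$-invariance of $X\in\MU$ and the two required symmetries of $x\in\LU$. The $\mantar$-invariance is the Lie-level shadow of $\gantar(X)=X$, and for symmetral $X$ (e.g.\ $X=\fos$) also follows from $\logari(X)$ being alternal together with \cite[Proposition~2.14]{kawamura25}; the $\push$-invariance is the more delicate point, which I would extract by unwinding the definition of $\gantar$ together with the identity $\neg\circ\push=\anti\circ\swap\circ\anti\circ\swap$ and the fact that $\logari$ intertwines the $\gari$-automorphism $\push$ with its Lie-level counterpart, reading $\push(x)=x$ off $\mantar(X)=-\invmu(X)$. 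Should that bookkeeping prove delicate, Lemma~\ref{lem:axit_mantar} offers an alternative route: it controls $\axit$-operators, hence $\arit(x)$, on $\gantar$-invariant inputs, and since $\garit(X)$ is determined on $\MU$ by its action on a $\mmu$-generating family of such inputs, one can instead verify $\mantar\circ\garit(X)=\garit(X)\circ\mantar$ directly on those generators.
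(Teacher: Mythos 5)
There is a genuine gap, and it sits exactly where you flagged it: the passage from the $\gantar$-invariance of $X$ to the $\push$- and $\mantar$-invariance of $x=\logari(X)$. Neither implication holds, and the $\push$-invariance fails already in length $1$ for the very bimoulds to which the lemma is applied in this paper ($X=\fos$ in Lemma \ref{lem:garit_os} and $X=\foss$ in Lemma \ref{lem:garit_pil}). Indeed, in length $1$ one has $\push=\neg$ (the operator $\anti\circ\swap\circ\anti\circ\swap$ is trivial there), while the length-$1$ component of $\logari(\fos)$ is $\fO$ itself, and a flexion unit is odd, so $\push(\fO)=-\fO\neq\fO$ in that degree. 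Moreover, $\gantar$ is an anti-automorphism of $\mmu$, not of $\gari$, so its Lie-level shadow concerns $\logmu(X)$ rather than $\logari(X)$: the hypothesis $\gantar(X)=X$ gives $\mantar(X)=-\invmu(X)$ (as exploited in Lemma \ref{lem:axit_mantar}) but says nothing about $\mantar(\logari(X))$ without additionally assuming symmetrality. Since your whole argument funnels through $\irat(x)=\axit(x,-\push(x))=\axit(x,-x)=\arit(x)$, which requires $\push(x)=x$, the proof does not go through. The fallback via Lemma \ref{lem:axit_mantar} is also off target: that lemma constrains $\axit(A,B)(X)$ for a $\gantar$-invariant \emph{argument} $X$, whereas here the $\gantar$-invariant object parametrizes the \emph{operator} $\garit(X)$, which must be shown to commute with $\mantar$ on all of $\MU$, not merely on $\gantar$-invariant inputs.

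For comparison, the paper's proof is a three-line formal computation that never passes to the Lie level: from the conjugation identities $\anti\circ\garit(\anti(Y))\circ\anti=\garit(\invmu(Y))$ and $\pari\circ\garit(\pari(Y))\circ\pari=\garit(Y)$, together with $\mantar=-\pari\circ\anti$, one obtains $\garit(X)\circ\mantar=-\pari\circ\garit(\pari(X))\circ\anti=\mantar\circ\garit(\gantar(X))$, and the hypothesis $\gantar(X)=X$ is then used verbatim. If you wish to rescue your strategy, you would in any case have to replace the false claim $\push(x)=x$ by these conjugation identities, at which point the exponential machinery $\garit(X)=\exp(\arit(x))$ (itself asserted without proof and sensitive to the order convention in $\garit(\gari(M,N))$) becomes superfluous.
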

\begin{proof}
    First we remember two identities,
    \[\anti\circ\garit(\anti(Y))\circ\anti=\garit(\invmu(Y))\]
    and
    \[\pari\circ\garit(\pari(Y))\circ\pari=\garit(Y),\]
    that are valid for any $Y\in\MU$.
    They yield
    \begin{align}
        \garit(X)\circ\mantar
        &=-\garit(X)\circ\pari\circ\anti\\
        &=-\pari\circ\garit(\pari(X))\circ\anti\\
        &=\mantar\circ\garit(\gantar(X)),
    \end{align}
    and then we can apply the assumption $\gantar(X)=X$.
\end{proof}

\begin{lemma}\label{lem:garit_pil}
    We have
    \[(\garit(\foss)\circ\invgari)(\foss)=\invmu(\foss)\]
    and
    \[(\garit(\foss)\circ\mantar\circ\invgari)(\foss)=-\foss.\]
\end{lemma}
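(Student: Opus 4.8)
The plan is to derive both identities from the defining property of $\invgari$ together with the facts that $\foss$ is symmetral and $\gantar$-invariant; these are known properties of the secondary bimould attached to an arbitrary flexion unit (symmetrality is the conjugate-unit content of Theorem~\ref{thm:bisymmetral}, and $\gantar$-invariance follows from it, just as for $\fos$ in the proof of Lemma~\ref{lem:epush}).

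For the first identity I would argue exactly as in the proof of Lemma~\ref{lem:garit_os}: unwinding the definition of $\gari$ gives $\gari(S,B)=\mmu(\garit(B)(S),B)$ for $S,B\in\GARI$. Specializing to $B=\foss$ and $S=\invgari(\foss)$, the left-hand side is the unit $1$ by the definition of $\invgari$, so $\mmu\bigl(\garit(\foss)(\invgari(\foss)),\foss\bigr)=1$; multiplying on the right by $\invmu(\foss)$ yields $(\garit(\foss)\circ\invgari)(\foss)=\invmu(\foss)$.

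For the second identity the idea is to commute $\mantar$ past $\garit(\foss)$ and then reuse the first identity. Since $\foss$ is $\gantar$-invariant, Lemma~\ref{lem:garit_mantar} gives $\garit(\foss)\circ\mantar=\mantar\circ\garit(\foss)$, so
\[(\garit(\foss)\circ\mantar\circ\invgari)(\foss)=(\mantar\circ\garit(\foss)\circ\invgari)(\foss)=\mantar(\invmu(\foss)).\]
It then remains to compute $\mantar(\invmu(\foss))$: the $\gantar$-invariance of $\foss$ gives $\mantar(\foss)=-\invmu(\foss)$ (the fact used in the proof of Lemma~\ref{lem:axit_mantar}), and applying the involution $\mantar$ to this identity, together with $\mantar\circ\mantar=\id$, yields $\mantar(\invmu(\foss))=-\foss$, which is the claim.

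I do not expect a genuine obstacle: the lemma is a bookkeeping consequence of the group-theoretic relation among $\gari$, $\garit$ and $\mmu$ and of the commutation in Lemma~\ref{lem:garit_mantar}. The only point requiring attention is to invoke the symmetrality and $\gantar$-invariance of $\foss$ in the correct generality — for an arbitrary flexion unit — which is precisely the role of the appendix's bisymmetrality result, applied to the conjugate unit $\fO$.
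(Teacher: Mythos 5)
Your proposal is correct and follows essentially the same route as the paper, which derives the first identity directly from the definition of $\invgari$ (via $\gari(A,B)=\mmu(\garit(B)(A),B)$) and the second from the first together with Lemma \ref{lem:garit_mantar}; you have merely made explicit the auxiliary facts the paper leaves implicit ($\foss\in\GARI_{\as}$, hence $\gantar$-invariant, and $\mantar(\invmu(\foss))=-\foss$).
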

\begin{proof}
    The first identity directly comes from the definition of $\invgari$.
    The second follows from the first one and Lemma \ref{lem:garit_mantar}.
\end{proof}

\begin{corollary}\label{cor:first}
    For every $A\in\LU$ we have
    \[(\ganit(\foz)^{-1}\circ\swap\circ\adari(\dess))(A)=\fragari(\preira(\foss,\swap(A)),\foss).\]
\end{corollary}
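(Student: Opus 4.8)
The plan is to rerun the computation that gives the second identity of Corollary~\ref{cor:second}, but without assuming $A\in\ARI_{\push}$; the absence of that hypothesis is exactly what replaces $\preari$ by its $\swap$-companion $\preira$. Concretely, I would first put the adjoint action in its $\fragari$-form: from $\adari(M)(A)=\gari(\preari(M,A),\invgari(M))$ and $\fragari(P,Q)=\gari(P,\invgari(Q))$ one obtains $\adari(\dess)(A)=\fragari(\preari(\dess,A),\dess)$ for the given bimould $A\in\LU$.

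Applying $\swap$ and Theorem~\ref{thm:first} exactly as in the proof of Corollary~\ref{cor:second} yields
\[
(\swap\circ\adari(\dess))(A)=\ganit\bigl(\crash(\swap(\dess))\bigr)\Bigl(\fragari\bigl(\swap(\preari(\dess,A)),\,\swap(\dess)\bigr)\Bigr).
\]
Next I would substitute the identifications $\swap(\dess)=\foss$ and $\crash(\foss)=\foz$ — both already used in Corollary~\ref{cor:second} and both consequences of \cite[Theorem~1.2]{kawamura25} — so that the right-hand side becomes $\ganit(\foz)\bigl(\fragari(\swap(\preari(\dess,A)),\foss)\bigr)$; applying $\ganit(\foz)^{-1}$ then reduces the claim to the single identity $\swap(\preari(\dess,A))=\preira(\foss,\swap(A))$.

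For this last identity I would appeal to the definition of $\preira$ as the $\swap$-conjugate of $\preari$ from \cite{kawamura25}, i.e.\ $\swap(\preari(M,A))=\preira(\swap(M),\swap(A))$ for every $M\in\GARI$ and every bimould $A$, with no symmetry condition imposed on $A$; taking $M=\dess$ and using $\swap(\dess)=\foss$ then finishes the proof. The point, and where the argument leaves Corollary~\ref{cor:second}, is that there the corresponding step used the \emph{a priori} sharper equality $\swap(\preari(\dess,A))=\preari(\foss,\swap(A))$, which holds only when $A\in\ARI_{\push}$; keeping only the unconditional version is precisely what leaves $\preira$ in the conclusion. I expect no genuine obstacle here: the one formal point is that Theorem~\ref{thm:first} is applied to $\fragari(\preari(\dess,A),\dess)$ with $\preari(\dess,A)$ lying in $\ARI$ rather than in $\GARI$, but this is harmless, since $\fragari(P,Q)=\gari(P,\invgari(Q))$ makes sense for arbitrary $P$ and $Q\in\GARI$ and Theorem~\ref{thm:first} is an identity of such formal expressions — exactly as it is already used in the proof of Corollary~\ref{cor:second}.
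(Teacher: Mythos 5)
Your argument is correct and coincides with the paper's own proof: both start from $\adari(\dess)(A)=\fragari(\preari(\dess,A),\dess)$, apply $\swap$ together with the first fundamental identity (Theorem \ref{thm:first}) using $\swap(\dess)=\foss$ and $\crash(\foss)=\foz$, and conclude via the defining relation $\swap(\preari(\dess,A))=\preira(\foss,\swap(A))$. Your remark contrasting this with Corollary \ref{cor:second}, where $\push$-invariance upgrades $\preira$ to $\preari$, is exactly the right way to see why no hypothesis on $A$ is needed here.
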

\begin{proof}
    One has
    \begin{align}
        (\swap\circ\adari(\dess))(A)
        &=\swap(\fragari(\preari(\dess,A),\dess))\\
        &=\fragira(\swap(\preari(\dess,A)),\foss)
    \end{align}
    by definition.
    Then we apply the first fundamental identity \eqref{eq:first_fundamental} as
    \begin{align}(\swap\circ\adari(\dess))(A)&=\ganit(\foz)(\fragari(\swap(\preari(\dess,A)),\foss))\\&=\ganit(\foz)(\fragari(\preira(\foss,\swap(A)),\foss)).\end{align}
\end{proof}

\begin{theorem}\label{thm:komiyamanote}
    The identity
    \[\swamu\left(\dess,((\id-\push^{-1})\circ\adari(\dess)^{-1})(B)\right)=\gari((\id-\Epush^{-1})(B),\dess)\]
    holds for any $B\in\LU$.
\end{theorem}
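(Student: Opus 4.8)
The plan is to strip off the two occurrences of $\adari(\dess)$ using the identity $\adari(M)(A)=\gari(\preari(M,A),\invgari(M))$, transport what remains through $\swap$ by the first fundamental identity (Corollary \ref{cor:first}), and recognise the outcome as the already-proven reformulation \eqref{eq:357rephrase} of Theorem \ref{thm:357}. The overall shape parallels Schneps \cite{schneps15}; the point is that it works for an arbitrary flexion unit.

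First I would set $C\coloneqq\adari(\dess)^{-1}(B)$, so that $B=\adari(\dess)(C)$ and, since
\[\gari(\adari(\dess)(C),\dess)=\gari\bigl(\gari(\preari(\dess,C),\invgari(\dess)),\dess\bigr)=\preari(\dess,C)\]
by associativity of $\gari$ and $\gari(\invgari(\dess),\dess)=1$, the assertion becomes
\[\gari\bigl(\Epush^{-1}(\adari(\dess)(C)),\dess\bigr)=\preari(\dess,C)-\swamu(\dess,C)+\swamu(\dess,\push^{-1}(C))\]
for every $C\in\LU$, where $X\mapsto\gari(X,\dess)$ and $X\mapsto\swamu(\dess,X)$ are both linear. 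Applying $\swap$ and using $\swap(\swamu(P,Q))=\mmu(\swap(P),\swap(Q))$, $\swap(\gari(P,Q))=\gira(\swap(P),\swap(Q))$, $\swap(\preari(\dess,D))=\preira(\foss,\swap(D))$ and $\swap(\dess)=\foss$, one obtains an identity in which $\swap(\adari(\dess)(C))$ may be replaced, via Corollary \ref{cor:first}, with $\ganit(\foz)\bigl(\fragari(\preira(\foss,\swap(C)),\foss)\bigr)$, and in which $\swap\circ\Epush^{-1}\circ\swap$ equals $X\mapsto\mmu(\ORush(X),\foz)$ by Corollary \ref{cor:epush_explicit}.

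Next comes the bookkeeping. I would put $\swap\circ\push^{-1}$ into closed form by $\neg\circ\push=\anti\circ\swap\circ\anti\circ\swap$ (as in Lemmas \ref{lem:push_swamu_answamu} and \ref{lem:eswap}); remove the $\ganit(\foz)$- and $\girat(\foz)$-factors using $\gaxit(\foz,\foz)\circ\mantar=\Omantar$ and $\girat(\foz)^{-1}(\foz)=1+\fO$ (established in the proofs of Lemmas \ref{lem:epush} and \ref{lem:eswap}) together with the $\mantar$-commutation Lemmas \ref{lem:garit_mantar}--\ref{lem:garit_pil}; and simplify the resulting $\mmu$/$\swamu$/$\answamu$/$\push$ expressions with Proposition \ref{prop:swamu_answamu} and Lemma \ref{lem:push_swamu_answamu}. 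Both sides then reduce to expressions in $\fO$, $\foz$, $\foss$ and $\swap(C)$, and the identity to be checked collapses to \eqref{eq:357rephrase} evaluated at a suitable argument, which was proved in the previous section; the bisymmetrality of $\fess$ (Theorem \ref{thm:bisymmetral}), encoded in $\foss$ and in Corollary \ref{cor:first}, is used throughout.

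I expect the main obstacle to be exactly this last reconciliation: matching the genuine cyclic operator $\push^{-1}$ on the $\adari(\dess)$-side with the twisted operator $\Epush^{-1}$ on the other side. Because $\Epush$ is only a \emph{twisted} conjugate of a cyclic operation --- via $\Eneg=\adari(\fess)\circ\neg\circ\adari(\fess)^{-1}$ combined with the $\Enegpush$-factor, cf.\ Lemma \ref{lem:epush} --- the operators $\adari(\dess)\circ\push^{-1}\circ\adari(\dess)^{-1}$ and $\Epush^{-1}$ do not coincide, and the content of the theorem is precisely that their difference, once fed into $X\mapsto\gari(X,\dess)$, equals the correction $\preari(\dess,C)-\swamu(\dess,C)$. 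Pinning this down is where the first fundamental identity and the flexion identities of the secondary bimould have to be used in tandem, and it is the technical heart of the argument.
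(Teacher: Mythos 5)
Your opening reduction is sound and matches the paper's: setting $C=\adari(\dess)^{-1}(B)$ and using $\adari(M)(A)=\gari(\preari(M,A),\invgari(M))$ to rewrite $\gari(B,\dess)$ as $\preari(\dess,C)$ is exactly how the paper arrives at its intermediate identity \eqref{eq:925}. The gap is in how you propose to finish. You claim the remaining identity ``collapses to \eqref{eq:357rephrase} evaluated at a suitable argument,'' but \eqref{eq:357rephrase} is just a reformulation of Theorem \ref{thm:357}, a statement purely about the operators $\Esena$, $\Epush$ and $\ORush$ with no occurrence of $\dess$, $\foss$ or $\adari$. After you apply Corollary \ref{cor:first}, the quantity $\swap(\adari(\dess)(C))=\ganit(\foz)(\fragari(\preira(\foss,\swap(C)),\foss))$ still carries an essential dependence on the secondary bimould $\foss$, and there is no mechanism in \eqref{eq:357rephrase} that can absorb it. Indeed, the two statements are logically independent: Theorem \ref{thm:357} identifies $\ARI_{\Epush}$ with $\ARI_{\Esena}$, while the present theorem is what transports $\ARI_{\push}$ into $\ARI_{\Epush}$ via $\adari(\dess)$; the paper needs \emph{both} to obtain Corollary \ref{cor:push_senary}, and its proof of the present theorem nowhere invokes Theorem \ref{thm:357}.

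You correctly identify the reconciliation of $\adari(\dess)\circ\push^{-1}\circ\adari(\dess)^{-1}$ with $\Epush^{-1}$ as ``the technical heart,'' but you defer it rather than supply it, and the route you sketch does not reach it. What is actually required (and what the paper does) is: decompose $\mmu(\foss,\,\cdot\,)$ as $\preira(\foss,\,\cdot\,)-\irat(\,\cdot\,)(\foss)$ so that the problem becomes the evaluation of $\irat(\swap((\push-\id)(A)))(\foss)$; convert the $\push^{-1}$ into $\mantar$-conjugation via Lemma \ref{lem:irat_mantar}; push the $\mantar$'s through using Lemmas \ref{lem:axit_mantar}, \ref{lem:garit_mantar} and \ref{lem:garit_pil} (the $\gantar$-invariance of $\foss$ and the identities $(\garit(\foss)\circ\invgari)(\foss)=\invmu(\foss)$, $(\garit(\foss)\circ\mantar\circ\invgari)(\foss)=-\foss$); apply Corollary \ref{cor:first} a second time to re-express the result through $\adari(\dess)$; and finally recognize $\Epush$ through its factorization $\Eneg\circ\Enegpush$ together with $\Eneg=\adari(\dess)\circ\neg\circ\adari(\dess)^{-1}$. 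None of these steps appears in your outline, and the substitute you offer (Corollary \ref{cor:epush_explicit} plus \eqref{eq:357rephrase}) would not produce the cancellation of the $\foss$-dependence. As written, the proposal is an accurate description of what must be proved, not a proof of it.
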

\begin{proof}
Since the left-hand side becomes
\begin{align}
    &\swamu\left(\dess,((\id-\push^{-1})\circ\adari(\dess)^{-1})(B)\right)\\
    &=\swap(\mmu(\foss,(\swap\circ(\id-\push^{-1})\circ\adari(\dess)^{-1})(B)))\\
    &=\begin{multlined}[t]\swap(\preira(\foss,(\swap\circ(\id-\push^{-1})\circ\adari(\dess)^{-1})(B)))\\
    -(\swap\circ\irat((\swap\circ(\id-\push^{-1})\circ\adari(\dess)^{-1})(B)))(\foss)
\end{multlined}\\
    &=\begin{multlined}[t]\preari(\dess,((\id-\push^{-1})\circ\adari(\dess)^{-1})(B))\\
        -(\swap\circ\irat((\swap\circ(\id-\push^{-1})\circ\adari(\dess)^{-1})(B)))(\foss),
    \end{multlined}
\end{align}
it is sufficient to show
\begin{multline}
    \gari((\id-\fE\text{-}\push^{-1})(B),\dess)\\
    =\preari(\dess,((\id-\push^{-1})\circ\adari(\dess)^{-1})(B))\\
    -(\swap\circ\irat((\swap\circ(\id-\push^{-1})\circ\adari(\dess)^{-1})(B)))(\foss)
\end{multline}
which is also equivalent to
\begin{multline}\label{eq:925}
        (\id-\fE\text{-}\push^{-1})(B)\\
        =\fragari(\preari(\dess,((\id-\push^{-1})\circ\adari(\dess)^{-1})(B)),\dess)\\
    -\fragari((\swap\circ\irat((\swap\circ(\id-\push^{-1})\circ\adari(\dess)^{-1})(B)))(\foss),\dess)
\end{multline}
Sicne the first term in the right-hand side of \eqref{eq:925} is computed as
\begin{align}
    &\fragari(\preari(\dess,((\id-\push^{-1})\circ\adari(\dess)^{-1})(B)),\dess)\\
    &=(\adari(\dess)\circ(\id-\push^{-1})\circ\adari(\dess)^{-1})(B)\\
    &=B-(\adari(\dess)\circ\push^{-1}\circ\adari(\dess)^{-1})(B),
\end{align}
The equality \eqref{eq:925} is rephrased as
\begin{align}\label{eq:goal}
    \begin{split}
    &\fE\text{-}\push^{-1}(B)-(\adari(\dess)\circ\push^{-1}\circ\adari(\dess)^{-1})(B)\\
    &=\fragari((\swap\circ\irat((\swap\circ(\id-\push^{-1})\circ\adari(\dess)^{-1})(B)))(\foss),\dess).
    \end{split}
\end{align}
We give a proof of \eqref{eq:goal} by computing the right-hand side.
Denote $A\coloneqq(\push^{-1}\circ\adari(\dess)^{-1})(B)$.
Since
\begin{multline}
    \fragari((\swap\circ\irat((\swap\circ(\push-\id))(A)))(\foss),\dess)\\
    =\swap(\fragira(\irat((\swap\circ(\push-\id))(A))(\foss),\foss))
\end{multline}
holds, applying \eqref{eq:first_fundamental}, we obtain
\begin{align}\label{eq:lhs}
    \begin{split}
    &\fragari((\swap\circ\irat((\swap\circ(\push-\id))(A)))(\foss),\dess)\\
    &=\swap(\fragira(\irat((\swap\circ(\push-\id))(A))(\foss),\foss))\\
    &=(\swap\circ\ganit(\foz))(\fragari(\irat((\swap\circ(\push-\id))(A))(\foss),\foss)).
    \end{split}
\end{align}
From the identity $(\swap\circ\push)^{\circ 2}=\id$ and Lemma \ref{lem:irat_mantar}, it follows that
\begin{align}
    &\irat((\swap\circ(\push-\id))(A))(\foss)\\
    &=\irat(((\push^{-1}\circ\swap)-\swap)(A))(\foss)\\
    &=(\mantar\circ\irat((\mantar\circ\swap)(A))\circ\mantar-\irat(\swap(A)))(\foss).
\end{align}
Next, we can apply Lemma \ref{lem:axit_mantar} as
\begin{align}
    &\irat((\swap\circ(\push-\id))(A))(\foss)\\
    &=\mantar(\mmu(\mantar(\foss),\irat((\mantar\circ\swap)(A))(\foss),\mantar(\foss)))-(\irat(\swap(A)))(\foss).
\end{align}
Therefore the computation proceeds as
\begin{align}
    &\mmu(\foss,(\mantar\circ\irat((\swap\circ(\push-\id))(A)))(\foss))\\
    &=\mmu(\irat((\mantar\circ\swap)(A))(\foss),(\pari\circ\anti)(\foss))-\mmu(\foss,(\mantar\circ\irat(\swap(A)))(\foss))\\
    &=\begin{multlined}[t]
        \mmu(\preira(\foss,(\mantar\circ\swap)(A))-\mmu(\foss,(\mantar\circ\swap)(A)),(\pari\circ\anti)(\foss))\\
        -\mmu(\foss,\mantar(\preira(\foss,\swap(A)))-\mantar(\mmu(\foss,\swap(A))))
    \end{multlined}\\
    &=\begin{multlined}[t]
        \mmu(\preira(\foss,(\mantar\circ\swap)(A)),(\pari\circ\anti)(\foss))\\-\mmu(\foss,\mantar(\preira(\foss,\swap(A))))
        -\mmu(\foss,(\mantar\circ\swap)(A),(\pari\circ\anti)(\foss))\\+\mmu(\foss,(\mantar\circ\swap)(A),(\pari\circ\anti)(\foss))
    \end{multlined}\\
    &=\mmu(\preira(\foss,(\mantar\circ\swap)(A)),(\pari\circ\anti)(\foss))-\mmu(\foss,\mantar(\preira(\foss,\swap(A)))).
\end{align}
Using Lemma \ref{lem:garit_pil} and $(\pari\circ\anti)(\foss)=\invmu(\foss)$, we have
\begin{align}
    &\mmu(\foss,(\mantar\circ\irat((\swap\circ(\push-\id))(A)))(\foss))\\
    &=\begin{multlined}[t]
        \mmu(\preira(\foss,(\mantar\circ\swap)(A)),(\garit(\foss)\circ\invgari)(\foss))\\
        +\mmu((\garit(\foss)\circ\mantar\circ\invgari)(\foss),\mantar(\preira(\foss,\swap(A))))
    \end{multlined}\\
    &=\begin{multlined}[t]
        \garit(\foss)(\mmu(\garit(\foss)^{-1}(\preira(\foss,(\mantar\circ\swap)(A))),\invgari(\foss)))\\
        +\garit(\foss)(\mmu((\mantar\circ\invgari)(\foss),(\garit(\foss)^{-1}\circ\mantar)(\preira(\foss,\swap(A)))))
    \end{multlined}\\
    &=\begin{multlined}[t]\garit(\foss)(\mmu(\garit(\foss)^{-1}(\preira(\foss,(\mantar\circ\swap)(A))),\invgari(\foss)))\\
        -(\garit(\foss)\circ\mantar)(\mmu(\garit(\foss)^{-1}(\preira(\foss,\swap(A))),\invgari(\foss)))
    \end{multlined}\\
    &=\begin{multlined}[t]\garit(\foss)(\fragari(\preira(\foss,(\mantar\circ\swap)(A)),\foss)\\-\mantar(\fragari(\preira(\foss,\swap(A)),\foss))).\end{multlined}
\end{align}
Applying Corollary \ref{cor:first}, we see that it moreover becomes
\begin{align}\label{eq:9252}
    \begin{split}
    &\garit(\foss)^{-1}(\mmu(\foss,(\mantar\circ\irat((\swap\circ(\push-\id))(A)))(\foss)))\\
    &=(\ganit(\foz)^{-1}\circ\swap\circ\adari(\dess)\circ\swap\circ\mantar\circ\swap)(A)-(\mantar\circ\ganit(\foz)^{-1}\circ\swap\circ\adari(\dess))(A)\\
    &=(\ganit(\foz)^{-1}\circ\swap)((\adari(\dess)\circ\mantar\circ\neg\circ\push)(A)-(\swap\circ\Omantar\circ\swap\circ\adari(\dess))(A))\\
    &=(\ganit(\foz)^{-1}\circ\swap\circ\fE\text{-}\neg\circ\mantar)((\adari(\dess)\circ\push)(A)-(\fE\text{-}\push\circ\adari(\dess))(A)).
    \end{split}
\end{align}
Here we used in the last equality, the commutativity between $\mantar$ and $\adari(X)$, which is true for any $X\in\GARI_{\as}$, and the identity $\Eneg=\adari(\dess)\circ\neg\circ\adari(\dess)^{-1}$, which follows from \cite[Theorem 1.3]{kawamura25}.
On the other hand, Lemmas \ref{lem:garit_pil} and \ref{lem:garit_mantar} yield
\begin{align}
    &\garit(\foss)^{-1}(\mmu(\foss,(\mantar\circ\irat((\swap\circ(\push-\id))(A)))(\foss)))\\
    &=\mmu((\pari\circ\anti\circ\invgari)(\foss),(\garit(\foss)^{-1}\circ\mantar\circ\irat((\swap\circ(\push-\id))(A)))(\foss))\\
    &=\mmu((\pari\circ\anti\circ\invgari)(\foss),(\mantar\circ\garit(\foss)^{-1}\circ\irat((\swap\circ(\push-\id))(A)))(\foss))\\
    &=\mantar(\mmu((\garit(\foss)^{-1}\circ\irat((\swap\circ(\push-\id))(A)))(\foss),\invgari(\foss)))\\
    &=\mantar(\fragari(\irat((\swap\circ(\push-\id))(A))(\foss),\foss)).
\end{align}
By combining this and \eqref{eq:9252}, we arrive at the equality
\begin{multline}
    (\fE\text{-}\neg\circ\mantar\circ\swap\circ\ganit(\foz)\circ\mantar)(\fragari(\irat((\swap\circ(\push-\id))(A))(\foss),\foss))\\
    =(\adari(\dess)\circ\push)(A)-(\fE\text{-}\push\circ\adari(\dess))(A).
\end{multline}
Since this identity and \eqref{eq:lhs} shows
\begin{multline}
    \fE\text{-}\push(\fragari((\swap\circ\irat((\swap\circ(\push-\id))(A)))(\foss),\dess))\\
    =(\adari(\dess)\circ\push)(A)-(\fE\text{-}\push\circ\adari(\dess))(A),
\end{multline}
we see that our goal \eqref{eq:goal} amounts to
\begin{multline}
    \fE\text{-}\push^{-1}(B)-(\adari(\dess)\circ\push^{-1}\circ\adari(\dess)^{-1})(B)\\=\fE\text{-}\push^{-1}((\adari(\dess)\circ\push)(A)-(\fE\text{-}\push\circ\adari(\dess))(A))
\end{multline}
This identity is easy by the definition $(\push^{-1}\circ\adari(\dess)^{-1})(B)=A$.
\end{proof}

\begin{corollary}[$=$ Theorem \ref{thm:push_senary}]\label{cor:push_senary}
    Both maps $\adari(\fess)$ and $\adari(\dess)$ give an $R$-linear isomorphism $\ARI_{\push}\to\ARI_{\Esena}$.
\end{corollary}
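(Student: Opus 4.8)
The plan is to prove that each of $\adari(\fess)$ and $\adari(\dess)$ is an $R$-linear bijection of the whole of $\ARI$ whose restriction to $\ARI_{\push}$ has image exactly $\ARI_{\Esena}$; the $R$-linearity is immediate from the series defining $\adari$, so only the set-theoretic assertion needs attention. I would begin with one reduction. By Theorem \ref{thm:357} we have $(\id-\Esena)(B)=\swamu(\fes,(\id-\Epush)(B))$ for $B\in\ARI$, and $\swamu(N,-)$ is a bijection for every bimould $N$ with $N(\emp)=1$: indeed $\swamu(N,X)=\swap(\mmu(\swap(N),\swap(X)))$ and $\swap(N)(\emp)=1$, so left $\mmu$-multiplication by $\swap(N)$ is invertible. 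Applying this with $N=\fes$ shows $\Esena(B)=B\iff\Epush(B)=B$, so $\ARI_{\Esena}$ is the space $\ARI_{\Epush}$ of $\Epush$-invariants. Note also that, since $\push$ and $\Epush$ are invertible, being $\push$-invariant is the same as being $\push^{-1}$-invariant, and likewise for $\Epush$; these trivialities are what let Theorem \ref{thm:komiyamanote}, phrased with inverses, be used against the invariance conditions.

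The $\dess$ case then drops out. The operator $\adari(\dess)\colon\ARI\to\ARI$ is a bijection with inverse $\adari(\invgari(\dess))$ (from $\adari(\gari(M,N))=\adari(M)\circ\adari(N)$ and $\adari$ of the $\gari$-unit being $\id$), so it suffices to check, for every $A\in\ARI$, that $A\in\ARI_{\push}$ iff $\adari(\dess)(A)\in\ARI_{\Esena}$. Applying Theorem \ref{thm:komiyamanote} with $B:=\adari(\dess)(A)$ gives
\[\swamu\bigl(\dess,(\id-\push^{-1})(A)\bigr)=\gari\bigl((\id-\Epush^{-1})(\adari(\dess)(A)),\dess\bigr).\]
Both $\swamu(\dess,-)$ (by the remark above, since $\dess(\emp)=1$) and $\gari(-,\dess)$ (since $\dess\in\GARI$ is $\gari$-invertible) are injective, so the left side vanishes iff $(\id-\push^{-1})(A)=0$, i.e.\ $A\in\ARI_{\push}$, and the right side vanishes iff $(\id-\Epush^{-1})(\adari(\dess)(A))=0$, i.e.\ $\adari(\dess)(A)\in\ARI_{\Epush}=\ARI_{\Esena}$. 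Hence $\adari(\dess)$ carries $\ARI_{\push}$ bijectively onto $\ARI_{\Esena}$.

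The remaining and harder point is the $\adari(\fess)$ case, since Theorem \ref{thm:komiyamanote} is stated only for $\dess$. Here I would establish the exact analogue
\[\swamu\bigl(\fess,((\id-\push^{-1})\circ\adari(\fess)^{-1})(B)\bigr)=\gari\bigl((\id-\Epush^{-1})(B),\fess\bigr)\qquad(B\in\ARI)\]
by rerunning the proof of Theorem \ref{thm:komiyamanote} with $\dess$ replaced by $\fess$ and $\foss$ by its swappee $\doss$ throughout: the first identity of Corollary \ref{cor:second} takes the place of the second, the corresponding form of Corollary \ref{cor:first} is used, and Lemmas \ref{lem:garit_pil} and \ref{lem:garit_mantar} are invoked for $\doss$ rather than $\foss$. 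The operator $\Epush$ itself is unchanged, being built only from $\fes$, $\foz$, $\mantar$ and $\swap$, and the identity $\Eneg=\adari(\fess)\circ\neg\circ\adari(\fess)^{-1}$ also holds (via $\slash(\fess)=\fes$). With that analogue in hand, the argument of the previous paragraph applies verbatim, using that $\fess(\emp)=1$ and that $\fess\in\GARI$ is $\gari$-invertible. The main obstacle is precisely this transport of Theorem \ref{thm:komiyamanote}: one must keep straight the chain of $\swap$-, $\anti$-, $\pari$- and $\gantar$-conjugation identities underlying it and verify each survives the exchange of $\fess$ with its swappee; all the symmetrality this requires (that $\fess$, $\doss$, $\foss$, $\dess$ are symmetral) is supplied by Theorem \ref{thm:bisymmetral} applied to $\fE$ and to its conjugate.
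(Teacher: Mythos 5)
Your proposal is correct and follows essentially the same route as the paper: reduce $\ARI_{\Esena}$ to the $\Epush$-invariants via Theorem \ref{thm:357} and the $\mmu$-invertibility underlying $\swamu(\fes,\cdot)$, then read off both inclusions from Theorem \ref{thm:komiyamanote} evaluated at $B=\adari(\dess)(A)$, and handle $\adari(\fess)$ by rerunning that theorem with $(\foss,\dess)$ replaced by $(\doss,\fess)$. Your version is marginally more explicit than the paper's in isolating the injectivity of $\swamu(\dess,\cdot)$ and $\gari(\cdot,\dess)$ to get the two directions simultaneously, but the substance is identical.
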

\begin{proof}
    From Theorem \ref{thm:357}, we know that the module of all $\Epush$-invariants is equal to the module of all solution of the senary relation, namely $\ARI_{\fE\text{-}\push}=\ARI_{\Esena}$. So we shall show $\ARI_{\push}\simeq\ARI_{\fE\text{-}\push}$.\\

    First we check that each image satisfies the senary relation.
    Writing $A\coloneqq(\push^{-1}\circ\adari(\dess)^{-1})(B)$ in Theorem \ref{thm:komiyamanote}, we see that for all $A\in\LU$ the identity
    \[\swamu(\dess,(\push-\id)(A))=\gari(((\id-\fE\text{-}\push^{-1})\circ\adari(\dess)\circ\push)(A),\dess)\]
    holds. Assume $A\in\ARI_{\push}$. Then the above identity becomes
    \[0=\gari(((\id-\fE\text{-}\push^{-1})\circ\adari(\dess))(A),\dess),\]
    so that $\adari(\dess)(A)$ should be $\fE\text{-}\push$-invariant.\\

    It is an easy fact that each image under $\adari(\dess)^{-1}$ is $\push$-invariant, due to Theorem \ref{thm:komiyamanote}.
    Note that, every argument we did from Lemma \ref{lem:garit_pil} to here remains available when we replace $(\foss,\dess)$ by $(\doss,\fess)$. 
\end{proof}

\appendix
\section{Bisymmetrality of secondary bimoulds}\label{sec:bisymmetrality}
In this appendix, we show that the secondary bimould $\fess$ is bisymmetral, namely both $\fess$ and $\dess=\swap(\fess)$ are symmetral. Since the ``non-swap'' side, $\fess$, is already shown to be symmetral in \cite[Proposition 6.13]{kawamura25} in a more general form, we check the symmetrality of $\dess$.
Noting that $\gantar$-invariance is a consequence of symmetrality (\cite[Proposition 2.14]{kawamura25}), we can say that our plan to prove $\dess\in\GARI_{\as}$ is an enhancement of arguments used to show $\gantar(\dess)=\dess$ in \cite[Theorem 7.9]{kawamura25}.
\begin{remark}
  As shown below, our proof is based on \'{E}calle's \emph{theory of dilators}.
  Especially, the ones we shall use are called \emph{$\gari$-dilator} and \emph{$\gira$-dilator} in \cite{ecalle15}.
  On the other hand, Nao Komiyama kindly suggested usage of the \emph{$\mmu$-dilator} introduced in loc.~cit., which is the method also used in \cite{schneps15} to show a special case of the bisymmetrality of $\fess$.
  However, the author has not made a success of a proof in that way.
\end{remark}

\begin{lemma}\label{lem:negelon}
  Let $k,l,h$ and $r$ be non-negative integers satisfying $r\ge 2$, $h\ge 1$ and $k+l+h\le r-1$.
  Then we have
  \begin{align}
    F_{k,l,h}^{r}
    &\coloneqq\sum_{1\le j\le s\le r}\frac{s+1-j}{s(s+1)}\sum_{\substack{0\le c\le j-1\\ 0\le d\le s-j}}(-1)^{c+d}\binom{j-1}{c}\binom{s-j}{d}\binom{c}{k}\binom{d+1}{l}\binom{c+d+1}{h}\\
    &=0.
  \end{align}
\end{lemma}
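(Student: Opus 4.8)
The plan is to interchange the order of summation so that the sums over $j$ and $s$ are performed first, for each fixed pair $(c,d)$. Collecting the factor that is independent of $j,s$, one gets
\[
F_{k,l,h}^{r}=\sum_{c,d\ge 0}(-1)^{c+d}\binom{c}{k}\binom{d+1}{l}\binom{c+d+1}{h}\,W_{c,d},\qquad W_{c,d}:=\sum_{\substack{1\le j\le s\le r\\ j\ge c+1,\ s\ge j+d}}\frac{s+1-j}{s(s+1)}\binom{j-1}{c}\binom{s-j}{d}.
\]
The first task is to put $W_{c,d}$ in closed form. Fixing $s$ and substituting $a=j-1$, $b=s-j$ (so that $a+b=s-1$ and $s+1-j=b+1$), the inner sum over $j$ becomes $\sum_{a+b=s-1,\,a\ge c,\,b\ge d}(b+1)\binom{a}{c}\binom{b}{d}$. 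Writing $a=c+p$, $b=d+q$ with $p+q=N:=s-1-c-d$, and using $(d+q+1)\binom{d+q}{d}=(d+1)\binom{d+q+1}{d+1}$ together with the Vandermonde convolution $\sum_{p+q=N}\binom{c+p}{c}\binom{d+q+1}{d+1}=\binom{N+c+d+2}{c+d+2}=\binom{s+1}{c+d+2}$, this inner sum equals $(d+1)\binom{s+1}{c+d+2}$. Now the elementary identity $\dfrac{\binom{s+1}{e+2}}{s(s+1)}=\dfrac{\binom{s-1}{e}}{(e+1)(e+2)}$ (with $e:=c+d$; just expand factorials) and the hockey‑stick identity $\sum_{s=e+1}^{r}\binom{s-1}{e}=\binom{r}{e+1}$ yield
\[
W_{c,d}=\frac{d+1}{(c+d+1)(c+d+2)}\binom{r}{c+d+1}.
\]

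With this closed form, I would reassemble the sum. Using $\dfrac{\binom{e+1}{h}}{e+1}=\dfrac{1}{h}\binom{e}{h-1}$ (valid because $h\ge 1$) and $\dfrac{\binom{r}{e+1}}{e+2}=\dfrac{1}{r+1}\binom{r+1}{e+2}$, one obtains
\[
F_{k,l,h}^{r}=\frac{1}{h(r+1)}\sum_{c,d\ge 0}(-1)^{c+d}\binom{c}{k}(d+1)\binom{d+1}{l}\binom{c+d}{h-1}\binom{r+1}{c+d+2}.
\]
Next I would group the terms by $e=c+d$. Writing $(d+1)\binom{d+1}{l}=(l+1)\binom{d+1}{l+1}+l\binom{d+1}{l}$, shifting $d\mapsto d+1$, and applying the classical convolution $\sum_{i+j=n}\binom{i}{k}\binom{j}{m}=\binom{n+1}{k+m+1}$ shows that $\Phi(e):=\sum_{c+d=e}\binom{c}{k}(d+1)\binom{d+1}{l}$ agrees with a polynomial in $e$ of degree at most $k+l+2$ (explicitly $(l+1)\binom{e+2}{k+l+2}+l\binom{e+2}{k+l+1}$, or $\binom{e+2}{k+2}$ when $l=0$), and that this polynomial vanishes at $e=-1$ and $e=-2$. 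Hence $p(e):=\binom{e}{h-1}\Phi(e)$ is a polynomial of degree at most $k+l+h+1$ with $p(-1)=p(-2)=0$, and
\[
F_{k,l,h}^{r}=\frac{1}{h(r+1)}\sum_{e\ge 0}(-1)^{e}\binom{r+1}{e+2}\,p(e)=\frac{1}{h(r+1)}\sum_{f=0}^{r+1}(-1)^{f}\binom{r+1}{f}\,p(f-2),
\]
where we substituted $f=e+2$ and the boundary terms $f=0,1$ drop out because $p(-2)=p(-1)=0$.

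To finish, note that $q(f):=p(f-2)$ is a polynomial of degree $\le k+l+h+1\le r$, the bound $k+l+h\le r-1$ being exactly our hypothesis; so $\deg q< r+1$. Therefore the alternating binomial sum $\sum_{f=0}^{r+1}(-1)^{f}\binom{r+1}{f}q(f)$, which equals $(-1)^{r+1}$ times the $(r+1)$‑st finite difference of $q$, is zero, and hence $F_{k,l,h}^{r}=0$.

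I expect the main obstacle to be the middle step: carrying out the double convolution with the asymmetric weight $b+1$ without sign or index slips, and spotting the collapse $\binom{s+1}{e+2}/(s(s+1))=\binom{s-1}{e}/((e+1)(e+2))$ that turns the $s$‑summation into a single application of the hockey‑stick identity. Once $W_{c,d}$ is a single binomial coefficient, everything downstream is the routine fact that a high‑order finite difference annihilates a low‑degree polynomial, with only the minor care that $h\ge 1$ is used once and that the degree estimate $k+l+h+1\le r$ is precisely the assumption $k+l+h\le r-1$.
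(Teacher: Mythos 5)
Your proposal is correct and follows essentially the same route as the paper: interchange the order of summation, collapse the $j$-sum by the Vandermonde-type identity to $(d+1)\binom{s+1}{c+d+2}$, telescope the $s$-sum to $\binom{r}{c+d+1}$, apply the same convolution identity for $\sum_{c+d=e}$, and finish with the vanishing of a high-order finite difference of a low-degree polynomial. The only cosmetic difference is at the end, where you absorb the factor $1/(e+2)$ into $\binom{r+1}{e+2}/(r+1)$ and take an $(r+1)$-st difference (using $p(-1)=p(-2)=0$ to extend the range), whereas the paper keeps $\binom{r}{n+1}$ and uses the $r$-th difference together with the vanishing of the constant term.
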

\begin{proof}
  In the computation
  \begin{multline}
    \sum_{1\le j\le s\le r}\frac{s+1-j}{s(s+1)}\sum_{\substack{0\le c\le j-1\\ 0\le d\le s-j}}(-1)^{c+d}\binom{j-1}{c}\binom{s-j}{d}\binom{c}{k}\binom{d+1}{l}\binom{c+d+1}{h}\\
    =\sum_{\substack{c,d\ge 0\\ c+d\le r-1}}\sum_{s=c+d+1}^{r}\sum_{j=c+1}^{s-d}\frac{s+1-j}{s(s+1)}\cdot(-1)^{c+d}\binom{j-1}{c}\binom{s-j}{d}\binom{c}{k}\binom{d+1}{l}\binom{c+d+1}{h},
  \end{multline}
  since the sum about $j$ is simplified as
  \begin{align}
    \sum_{j=c+1}^{s-d}(s+1-j)\binom{j-1}{c}\binom{s-j}{d}
    &=(d+1)\sum_{j=c+1}^{s-d}\binom{j-1}{c}\binom{s-j+1}{d+1}\\
    &=(d+1)\binom{s+1}{c+d+2}\\
    &=(d+1)\frac{s(s+1)}{(c+d+1)(c+d+2)}\binom{s-1}{c+d}
  \end{align}
  which is a Vandermonde-type identity, the original sum becomes
  \[
    F_{k,l,h}^{r}
    =\sum_{c,d=0}^{r-1}\sum_{s=c+d+1}^{r}(-1)^{c+d}\frac{d+1}{(c+d+1)(c+d+2)}\cdot\binom{s-1}{c+d}\binom{c}{k}\binom{d+1}{l}\binom{c+d+1}{h}.
  \]
  Next, we see that the sum about $s$ becomes
  \[\sum_{s=c+d+1}^{r}\binom{s-1}{c+d}=\sum_{s=c+d+1}^{r}\left(\binom{s}{c+d+1}-\binom{s-1}{c+d+1}\right)=\binom{r}{c+d+1},\]
  and thus
  \begin{align}
    F_{k,l,h}^{r}
    &=\sum_{c,d=0}^{r-1}(-1)^{c+d}\frac{d+1}{(c+d+1)(c+d+2)}\cdot\binom{r}{c+d+1}\binom{c}{k}\binom{d+1}{l}\binom{c+d+1}{h}\\
    &=\sum_{n=0}^{r-1}(-1)^{n}\frac{1}{(n+1)(n+2)}\binom{r}{n+1}\binom{n+1}{h}\sum_{d=0}^{n}(d+1)\binom{d+1}{l}\binom{n-d}{k}.
  \end{align}
  Here, we can use the identity
  \[\sum_{d=0}^{n}(d+1)\binom{d+1}{l}\binom{n-d}{k}=l\binom{n+2}{k+l+1}+(l+1)\binom{n+2}{k+l+2},\]
  which is easily shown by calculating the generating function, so that
  \begin{align}
    F_{k,l,h}^{r}
    &=\sum_{n=0}^{r-1}(-1)^{n}\frac{1}{(n+1)(n+2)}\binom{r}{n+1}\left(l\binom{n+2}{k+l+1}+(l+1)\binom{n+2}{k+l+2}\right)\\
    &=\sum_{n=0}^{r-1}(-1)^{n}\binom{r}{n+1}\frac{1}{h}\binom{n}{h-1}\left(\frac{l}{k+l+1}\binom{n+1}{k+l}+\frac{l+1}{k+l+2}\binom{n+1}{k+l+1}\right).
  \end{align}
  Now let us use the standard result of the theory of finite differences: it states that, for any positive integer $N$ and an integer $1\le d<N$, the equality
  \[\sum_{n=1}^{N}(-1)^{n}\binom{N}{n}n^{d}=0\]
  holds.
  Since our sum is
  \[F_{k,l,h}^{r}=-\sum_{n=1}^{r}(-1)^{n}\binom{r}{n}\frac{1}{h}\binom{n-1}{h-1}\left(\frac{l}{k+l+1}\binom{n}{k+l}+\frac{l+1}{k+l+2}\binom{n}{k+l+1}\right)\]
  and the factor in the summand starting from $1/h$ is a polynomial of $n$ with the degree $\le k+l+h-1<r$ and without the constant term unless $k+l=0$.
  Even in such a case, the constant term vanishes due to the factor $l/(k+l+1)$.
  Thus we conclude that $F_{k,l,h}^{r}=0$ for all $k,l,h$ and $r$. 
\end{proof}

The following is a variant of \cite[Lemma 7.7]{kawamura25}.
\begin{proposition}\label{prop:dilator_symmetral}
  Let $S\in\MU$ and $D\in\LU$ be two bimoulds related as
  \[\der(S)=\preari(S,D).\]
  Then, $D$ is alternal if and only if $S$ is symmetral.
\end{proposition}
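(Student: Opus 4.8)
The plan is to transport the defining relation $\der(S)=\preari(S,D)$ through the $\ari$-exponential $\expari$, using the following standard facts. First, $\expari\colon\LU\to\MU$ is a bijection restricting to a bijection $\LU_{\al}\to\MU_{\as}$ (\cite[Theorem~A.7]{komiyama21}). Second, $\ari$ and $\mmu$ are compatible with the length grading, so the derivation $\der$ (multiplication by $r$ on $\BIMU_{r}$) is a derivation of both, is invertible on every $\BIMU_{r}$ with $r\ge 1$, and preserves alternality. Third, $\preari(A,-)$ is bijective whenever $A(\emptyset)=1$, since $\preari(A,D)=\arit(D)(A)+\mmu(A,D)$ is triangular for the length filtration with diagonal part $D_{r}\mapsto A(\emptyset)\,D_{r}$ (no summand contributes any other $D_{r}$-linear term in length $r$: in $\amit(D)(A)$ and $\anit(D)(A)$ the blocks playing the role of $\bb,\bc$ are forced to be nonempty). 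Finally, since $\preari$ linearizes $\gari$ (\cite[Remark~3.16]{kawamura25}), the differential of $\expari$ at $L\in\LU$ has the pre-Lie ``$d\exp$'' shape
\[(d\expari)_{L}(\dot L)=\preari\bigl(\expari(L),\Psi_{L}(\dot L)\bigr),\qquad \Psi_{L}=\frac{1-e^{-\ad_{L}}}{\ad_{L}},\quad \ad_{L}\coloneqq\ari(L,-),\]
where the series converges because $\ad_{L}$ raises length by at least $1$; both $\Psi_{L}$ and $\Psi_{L}^{-1}=\ad_{L}/(1-e^{-\ad_{L}})$ are invertible power series in $\ad_{L}$, hence preserve alternality when $L\in\LU_{\al}$ (as $\LU_{\al}$ is an $\ari$-Lie subalgebra). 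I also record the chain rule $\der(\expari(L))=(d\expari)_{L}(\der(L))$, valid because $\der$ is a derivation of $\ari$.

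Assume first that $D$ is alternal, and put $L\coloneqq\logari(S)\in\LU$. Combining $\der(S)=\preari(S,D)$ with the chain rule and the $d\exp$ formula gives
\[\preari\bigl(\expari(L),\Psi_{L}(\der(L))\bigr)=\preari\bigl(\expari(L),D\bigr),\]
so by injectivity of $\preari(\expari(L),-)$ we obtain $\der(L)=\Psi_{L}^{-1}(D)$. Reading this in length $r$: the right-hand side is $D_{r}$ plus a fixed combination of iterated $\ari$-brackets built from $L_{1},\dots,L_{r-1}$ and $D_{1},\dots,D_{r}$, and the left-hand side is $r\,L_{r}$. Induct on $r$: if $L_{1},\dots,L_{r-1}$ are alternal, then since $D$ is alternal and $\ari$ preserves alternality the right-hand side is alternal, whence $L_{r}$ is alternal. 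Therefore $L\in\LU_{\al}$, so $S=\expari(L)\in\MU_{\as}$, i.e.\ $S$ is symmetral. Conversely, if $S$ is symmetral then $L\coloneqq\logari(S)\in\LU_{\al}$, and the same substitution gives $D=\Psi_{L}(\der(L))$; since $\der$ preserves alternality, $\der(L)$ is alternal, and as $\Psi_{L}$ preserves alternality, so is $D$.

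One may also argue the converse without the $d\exp$ formula: the graded rescaling $S(t)\coloneqq\sum_{r}t^{r}S_{r}$ is symmetral for every $t$ because symmetrality is a homogeneous condition, so $\der(S)=\tfrac{d}{dt}\big|_{t=1}S(t)\in T_{S}\MU_{\as}$; and since $\preari(S,-)$ carries $\LU_{\al}$ bijectively onto $T_{S}\MU_{\as}$ — a consequence of $\preari$ linearizing $\gari$, of $\gari$ preserving symmetrality (\cite[Proposition~3.17]{kawamura25}), and of $\expari(\LU_{\al})=\MU_{\as}$ — the element $D=\preari(S,-)^{-1}(\der(S))$ lies in $\LU_{\al}$.

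The main obstacle is the package assembled in the first paragraph, chiefly the precise interaction of $\der$ with $\expari$ and the verification that $\Psi_{L}^{\pm1}$ are well-defined alternality-preserving automorphisms; once it is in place, both implications reduce to the short length induction above. This runs in parallel to, and is a variant of, \cite[Lemma~7.7]{kawamura25}. In particular the statement will apply to $S=\fess$ and $S=\dess$ once the corresponding $\gari$-dilators have been identified, which is the computation carried out in the appendix with the aid of Lemma~\ref{lem:negelon}.
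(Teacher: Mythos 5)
Your argument is correct in outline but takes a genuinely different route from the paper on the harder implication. For ``$D$ alternal $\Rightarrow S$ symmetral'' the paper never passes through $\logari(S)$: it invokes the Furusho--Komiyama expansion \eqref{eq:fk} of $\arit(D)(S)(\ba\sh\bb)$ for alternal $D$ (valid for arbitrary $S$) and proves the symmetrality relations $S(\ba\sh\bb)=S(\ba)S(\bb)$ directly by induction on $\ell(\ba)+\ell(\bb)$ from $\der(S)=\arit(D)(S)+\mmu(S,D)$. You instead transport the equation through $\expari$ and reduce to $\expari(\LU_{\al})=\MU_{\as}$ plus a length induction on $\der(L)=\Psi_{L}^{-1}(D)$; this works, and your verifications of the triangularity of $\preari(S,-)$ and of the convergence and alternality-preservation of $\Psi_{L}^{\pm 1}$ are sound (note the induction only needs that length $r$ of $\Psi_{L}^{-1}(D)$ involves $L_{1},\dots,L_{r-1}$, which you correctly observe, and that any sign or left/right convention issue in $\Psi_{L}$ is harmless since either variant is an invertible series in $\ad_{L}$). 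The trade-off is that your load-bearing ingredient, the identity $(d\expari)_{L}(\dot L)=\preari(\expari(L),\Psi_{L}(\dot L))$, is asserted rather than proved: it is the standard $d\exp$ formula for the pro-unipotent group $(\GARI,\gari)$ with Lie algebra $(\ARI,\ari)$, but checking that \'{E}calle's $\expari$ is genuinely the exponential of that group in a sense strong enough for the formula to apply is comparable in effort to the combinatorial lemma you are avoiding, so this step needs a proof or a precise citation before the argument is complete. For the converse implication both of your arguments are fine, and your second one (rescaling $S$ and identifying the tangent space to $\MU_{\as}$ at $S$ with $\preari(S,\LU_{\al})$) is essentially the paper's own proof, which writes $(\id+\ep\der)(S)=\gari(S,1+\ep D)$ over the dual numbers and concludes that $\ep D=\logari(\gari(\invgari(S),(\id+\ep\der)(S)))$ is $\logari$ of a symmetral element, hence alternal.
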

\begin{proof}
  First we assume that $S$ is symmetral.
  Then we have
  \[(\id+\ep\der)(S)=S+\ep\preari(S,D)=\gari(S,1+\ep D),\]
  as an equality holding in the $R[\ep]$-module of bimoulds.
  Hence we have
  \[\ep D=\logari(\gari(\invgari(S),(\id+\ep\der)(S))).\]
  Since $\GARI_{\as}$ forms a group (\cite[Proposition 3.17]{kawamura25}) and $\id+\ep\der$ preserves symmetrality, we see that $\ep D$ is equal to $\logari(\text{something symmetral})$ and thus alternal by \cite[Theorem A.7]{komiyama21}.
  Here, one has $(\id+\ep\der)(S)\in\GARI_{\as}$ as
  \begin{align}
  (\id+\ep\der)(S)(\ba\sh\bb)
  &=(1+\ep(\ell(\ba)+\ell(\bb)))S(\ba\sh\bb)\\
  &=(1+\ep(\ell(\ba)+\ell(\bb)))S(\ba)S(\bb)\\
  &=(1+\ep\ell(\ba))S(\ba)\cdot(1+\ep\ell(\bb))S(\bb)\\
  &=(\id+\ep\der)(S)(\ba)\cdot(\id+\ep\der)(S)(\bb).
  \end{align}
  Next we prove the symmetrality of $S$, assuming $D\in\ARI_{\al}$.
  Let us recall the first identity in \cite[p.~721]{fk23}: it states
  \begin{multline}\label{eq:fk}
    \begin{split}
    \arit(B)(A)(\ba\sh\bb)
    =\sum_{\substack{\ba=\ba_{1}\ba_{2}\ba_{3}\\ \ba_{2},\ba_{3}\neq\emp}}A(\ba_{1}\ful{\ba_{2}}\ba_{3}\sh\bb)B(\ba_{2}\flr{\ba_{3}})
    -\sum_{\substack{\ba=\ba_{1}\ba_{2}\ba_{3}\\ \ba_{1},\ba_{2}\neq\emp}}A(\ba_{1}\fur{\ba_{2}}\ba_{3}\sh\bb)B(\fll{\ba_{1}}\ba_{2})\\
    +\sum_{\substack{\bb=\bb_{1}\bb_{2}\bb_{3}\\ \bb_{2},\bb_{3}\neq\emp}}A(\ba\sh\bb_{1}\ful{\bb_{2}}\bb_{3})B(\bb_{2}\flr{\bb_{3}})
    -\sum_{\substack{\bb=\bb_{1}\bb_{2}\bb_{3}\\ \bb_{1},\bb_{2}\neq\emp}}A(\ba\sh\bb_{1}\fur{\bb_{2}}\bb_{3})B(\fll{\bb_{1}}\bb_{2})
    \end{split}
  \end{multline}
  for every $A\in\BIMU$ and $B\in\ARI_{\al}$ and non-empty $\ba,\bb\in\fV$.
  Note that, the original argument assumes $A\in\ARI_{\al}$, but in the proof of the above equation in \cite{fk23}, that condition is not used.
  Applying this identity for $A=S$ and $B=D$ in our present setting, we see that
  \begin{align}\label{eq:dilator_symmetral}
    \begin{split}
    &\der(S)(\ba\sh\bb)\\
    &=\arit(D)(S)(\ba\sh\bb)+\mmu(S,D)(\ba\sh\bb)\\
    &=\begin{multlined}[t]
      \sum_{\substack{\ba=\ba_{1}\ba_{2}\ba_{3}\\ \ba_{2},\ba_{3}\neq\emp}}S(\ba_{1}\ful{\ba_{2}}\ba_{3}\sh\bb)D(\ba_{2}\flr{\ba_{3}})
    -\sum_{\substack{\ba=\ba_{1}\ba_{2}\ba_{3}\\ \ba_{1},\ba_{2}\neq\emp}}S(\ba_{1}\fur{\ba_{2}}\ba_{3}\sh\bb)D(\fll{\ba_{1}}\ba_{2})\\
    +\sum_{\substack{\bb=\bb_{1}\bb_{2}\bb_{3}\\ \bb_{2},\bb_{3}\neq\emp}}S(\ba\sh\bb_{1}\ful{\bb_{2}}\bb_{3})D(\bb_{2}\flr{\bb_{3}})
    -\sum_{\substack{\bb=\bb_{1}\bb_{2}\bb_{3}\\ \bb_{1},\bb_{2}\neq\emp}}S(\ba\sh\bb_{1}\fur{\bb_{2}}\bb_{3})D(\fll{\bb_{1}}\bb_{2})
    +\mmu(S,D)(\ba\sh\bb).
    \end{multlined}
  \end{split}
  \end{align}
  We prove the symmetrality of $S$ by induction with the help of the above identity.
  The first symmetrality equation $S(a\sh b)=S(a)S(b)$ is checked by the fact that $D(a)=S(a)$ and
  \[S(ab)=\arit(D)(S)(ab)+\mmu(S,D)(ab)=S(a)D(b)=D(a)D(b),\]
  hold for letters $a$ and $b$.
  For the induction part, by applying the induction hypothesis for \eqref{eq:dilator_symmetral}, we have
  \begin{align}
    &\der(S)(\ba\sh\bb)\\
    &=\begin{multlined}[t]
      \sum_{\substack{\ba=\ba_{1}\ba_{2}\ba_{3}\\ \ba_{2},\ba_{3}\neq\emp}}S(\ba_{1}\ful{\ba_{2}}\ba_{3})S(\bb)D(\ba_{2}\flr{\ba_{3}})
    -\sum_{\substack{\ba=\ba_{1}\ba_{2}\ba_{3}\\ \ba_{1},\ba_{2}\neq\emp}}S(\ba_{1}\fur{\ba_{2}}\ba_{3})S(\bb)D(\fll{\ba_{1}}\ba_{2})\\
    +\sum_{\substack{\bb=\bb_{1}\bb_{2}\bb_{3}\\ \bb_{2},\bb_{3}\neq\emp}}S(\ba)S(\bb_{1}\ful{\bb_{2}}\bb_{3})D(\bb_{2}\flr{\bb_{3}})
    -\sum_{\substack{\bb=\bb_{1}\bb_{2}\bb_{3}\\ \bb_{1},\bb_{2}\neq\emp}}S(\ba)S(\bb_{1}\fur{\bb_{2}}\bb_{3})D(\fll{\bb_{1}}\bb_{2})
    +\mmu(S,D)(\ba\sh\bb)
    \end{multlined}\\
    &=\arit(D)(S)(\ba)S(\bb)+S(\ba)\arit(D)(S)(\bb)+\mmu(S,D)(\ba\sh\bb).
  \end{align}  
  On the other hand, using $D\in\ARI_{\al}$ we have
    \begin{align}
      \mmu(S,D)(\ba\sh\bb)
      &=\sum_{\substack{\ba=\ba_{1}\ba_{2}\\ \bb=\bb_{1}\bb_{2}}}S(\ba_{1}\sh\bb_{1})D(\ba_{2}\sh\bb_{2})\\
      &=\sum_{\substack{\ba=\ba_{1}\ba_{2}\\ \bb=\bb_{1}\bb_{2}\\ \ba_{2}=\emp\text{ or }\bb_{2}=\emp}}S(\ba_{1}\sh\bb_{1})D(\ba_{2}\sh\bb_{2})\\
      &=\sum_{\ba=\ba_{1}\ba_{2}}S(\ba_{1}\sh\bb)D(\ba_{2})+\sum_{\bb=\bb_{1}\bb_{2}}S(\ba\sh\bb_{1})D(\bb_{2})\\
      &=\sum_{\ba=\ba_{1}\ba_{2}}S(\ba_{1})S(\bb)D(\ba_{2})+\sum_{\bb=\bb_{1}\bb_{2}}S(\ba)S(\bb_{1})D(\bb_{2})\\
      &=\mmu(S,D)(\ba)S(\bb)+S(\ba)\mmu(S,D)(\bb),
    \end{align}
    where we used $D(\emp)=0$ and the induction hypothesis in the last equality.
    Combining these equations, we obtain
    \begin{align}
      (\ell(\ba)+\ell(\bb))S(\ba\sh\bb)
      &=\der(S)(\ba\sh\bb)\\
      &=\begin{multlined}[t]\arit(D)(S)(\ba)S(\bb)+S(\ba)\arit(D)(S)(\bb)+\sum_{\ba=\ba_{1}\ba_{2}}S(\ba_{1}\sh\bb)D(\ba_{2})\\+\mmu(S,D)(\ba)S(\bb)+S(\ba)\mmu(S,D)(\bb)\end{multlined}\\
      &=\preari(S,D)(\ba)S(\bb)+S(\ba)\preari(S,D)(\bb)\\
      &=\der(S)(\ba)S(\bb)+S(\ba)\der(S)(\bb)\\
      &=(\ell(\ba)+\ell(\bb))S(\ba)S(\bb).
      \end{align}
      This shows that $S$ is symmetral.
\end{proof}

\begin{lemma}\label{lem:mu_factor}
  Let $N$ be a positive integer and $S\in\MU$.
  Then we have
  \[\mmu^{N}(S)(\bw)=\sum_{i=0}^{\ell(\bw)}\binom{N}{i}\mmu^{i}(S-1)(\bw),\]
  where $\mmu^{n}(A)$ denotes the bimould $\mmu(\underbrace{A,\ldots,A}_{n})$.
\end{lemma}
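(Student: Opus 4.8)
The plan is to use that $\mmu$ is the plain concatenation product of bimoulds, for which the bimould $1$ (taking the value $1$ on $\emp$ and $0$ on every non-empty sequence) is a two-sided unit, and then to expand binomially after writing $S=1+(S-1)$. Put $T\coloneqq S-1$, so that $T(\emp)=0$. The asserted formula will drop out of the binomial expansion of $\mmu^{N}(1+T)$ together with a length bound for $\mmu^{i}(T)$.

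First I would recall the explicit shape of iterated $\mmu$-products: for bimoulds $A_{1},\ldots,A_{n}$ one has
\[\mmu(A_{1},\ldots,A_{n})(\bw)=\sum_{\bw=\bw_{1}\cdots\bw_{n}}A_{1}(\bw_{1})\cdots A_{n}(\bw_{n}),\]
the sum running over all ordered decompositions of $\bw$ into $n$ possibly empty factors. Since $1$ is the $\mmu$-unit it commutes with $T$, so the noncommutativity of $\mmu$ is harmless in the binomial expansion and
\[\mmu^{N}(S)=\mmu^{N}(1+T)=\sum_{i=0}^{N}\binom{N}{i}\,\mmu^{i}(T),\]
with the convention $\mmu^{0}(T)=1$. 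On the other hand, by the displayed formula and $T(\emp)=0$, only decompositions of $\bw$ into \emph{non-empty} factors contribute to $\mmu^{i}(T)(\bw)$; as there are none once $i$ exceeds $\ell(\bw)$, we get $\mmu^{i}(T)(\bw)=0$ for all $i>\ell(\bw)$.

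Combining these and evaluating at a fixed $\bw$: in $\sum_{i=0}^{N}\binom{N}{i}\mmu^{i}(T)(\bw)$ every term with $i>\ell(\bw)$ vanishes, while every term with $i>N$ already carries the factor $\binom{N}{i}=0$; hence the upper summation limit may be replaced by $\ell(\bw)$ regardless of whether $N\ge\ell(\bw)$ or $N<\ell(\bw)$, which is exactly the claim. (Alternatively one can bypass the noncommutative binomial theorem and induct on $N$, using Pascal's rule together with $\mmu^{N}(S)=\mmu(\mmu^{N-1}(S),S)=\mmu^{N-1}(S)+\mmu(\mmu^{N-1}(S),T)$ and $\mmu(\mmu^{i}(T),T)=\mmu^{i+1}(T)$.) There is essentially no obstacle; the only point deserving a moment's care is that the formula is an equality of bimoulds read off at each $\bw$ separately, so truncating the sum at $\ell(\bw)$ is legitimate even when $N$ is larger or smaller than $\ell(\bw)$.
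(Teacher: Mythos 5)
Your proof is correct and is essentially the paper's own argument in a slightly different packaging: the paper directly classifies the decompositions $\bw=\ba_{1}\cdots\ba_{N}$ by the number $i$ of non-empty factors, which is exactly what your binomial expansion of $\mmu^{N}(1+T)$ around the central unit $1$ encodes, and both proofs use $S(\emp)=1$ (i.e.\ $S\in\MU$) to get $T(\emp)=0$ and hence the truncation at $\ell(\bw)$. No gaps.
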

\begin{proof}
  By the definition of $\mmu$, we can compute
  \begin{align}
    \mmu^{N}(S)(\bw)
    &=\sum_{\bw=\ba_{1}\cdots\ba_{N}}S(\ba_{1})\cdots S(\ba_{N})\\
    &=\sum_{i=0}^{\ell(\bw)}\sum_{\substack{\bw=\ba_{1}\cdots\ba_{N}\\ |\{j\mid \ba_{j}\neq\emp\}|=i}}S(\ba_{1})\cdots S(\ba_{N})\\
    &=\sum_{i=0}^{\ell(\bw)}S(\emp)^{N-i}\binom{N}{N-i}\sum_{\substack{\bw=\bb_{1}\cdots\bb_{i}\\ \bb_{1},\ldots,\bb_{i}\neq\emp}}S(\bb_{1})\cdots S(\bb_{i})\\
    &=\sum_{i=0}^{\ell(\bw)}\binom{N}{N-i}\mmu^{i}(S-1)(\bw).
  \end{align}
\end{proof}

\begin{lemma}\label{lem:shuffle}
  Let $A$ and $B$ be two bimoulds in $\MU_{\as}$.
  Then we have
  \begin{multline}
    \gaxit(A,B)(\fO)(\ba\sh\bb)
    =\sum_{\ba=\ba_{1}x\ba_{2}}A(\ba_{1}\flr{x})\fO(\ful{\ba_{1}}x\fur{\ba_{2}}\fur{\bb})B(\fll{x}\ba_{2})\mmu(A,B)(\fll{x}\bb)\\+\sum_{\bb=\bb_{1}y\bb_{2}}A(\bb_{1}\flr{y})\fO(\ful{\ba}\ful{\bb_{1}}y\fur{\bb_{2}})B(\fll{y}\bb_{2})\mmu(A,B)(\ba\flr{y}),
  \end{multline}
  for every $\ba,\bb\in\fV\setminus\{\emp\}$.
\end{lemma}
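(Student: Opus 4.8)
The plan is to compute both sides explicitly. The right-hand side is already a closed combinatorial expression, so the work is to expand the left-hand side and reorganize it into that form; the two main inputs will be the explicit description of $\gaxit$ acting on a bimould and the symmetrality of $A$ and $B$.

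First I would produce a one-sequence formula for $\gaxit(A,B)(\fO)$. Using the left-right separation $\gaxit(A,B)=\gamit(A)\circ\ganit\bigl(\gamit(A)^{-1}(B)\bigr)$ of \cite[Corollary 3.11]{kawamura25}, set $C:=\gamit(A)^{-1}(B)$ so that $\gamit(A)(C)=B$. Since $\fO$ is supported in length one, the inner operator is evaluated exactly as in the proof of Lemma \ref{lem:garit_os}: only the first letter is marked, so $\ganit(C)(\fO)(\bw)=\fO(w_1\fur{w_2\cdots w_r})\,C(\fll{w_1}(w_2\cdots w_r))$ for $r\ge 1$. Feeding this into $\gamit(A)$, expanding it by the combinatorial formula for $\gamit$, recombining the $A$- and $C$-factors via $\gamit(A)(C)=B$, and absorbing the spurious $\fO$-factors generated along the way through the flexion-unit identities for $\fO$, I expect to reach a closed formula for $\gaxit(A,B)(\fO)(\bw)$ in which a single letter $x$ of $\bw$ is singled out, $A$ consumes the amalgamated prefix, and the amalgamated suffix is consumed by a further sum of $B$- and $\mmu(A,B)$-contributions.

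Then I would set $\bw=\ba\sh\bb$ and split the sum according to whether the marked letter $x$ comes from a factor of $\ba$ or of $\bb$; by symmetry it suffices to treat $x\in\ba$, writing $\ba=\ba_1\,x\,\ba_2$. The prefix then runs over the interleavings of $\ba_1$ with pieces of $\bb$, and the suffix over the complementary interleavings; the flexion substitutions occurring here translate $u$- and $v$-components only by partial sums that are additive under concatenation, hence under the shuffle product, so the values of $A$ and of $B$ on the interleaved, flexed words factor by their symmetrality. Collecting the $\bb$-indexed factors and recognizing the resulting sum over splittings of $\bb$ as $\mmu(A,B)(\fll{x}\bb)$ should assemble precisely the first sum of the statement, and the case $x\in\bb$ is mirror-symmetric and gives the second. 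The step I expect to be genuinely delicate --- and the real content of the lemma --- is the first one: pinning down the exact one-sequence formula in the correct normalization, in particular tracking and cancelling the auxiliary $\fO$-factors and identifying how the suffix contribution breaks up so that, after substituting $\bw=\ba\sh\bb$, all of $\bb$ lands inside a single $\mmu(A,B)$-factor rather than being interleaved with $\ba$. Once that formula is secured, the remaining reorganization is routine bookkeeping.
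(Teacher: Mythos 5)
Your plan is essentially the paper's proof: expand $\gaxit(A,B)(\fO)$ on a single word (one marked letter $x$, with $A$ on the flexed prefix and $B$ on the flexed suffix), substitute $\bw=\ba\sh\bb$, split according to whether $x$ lies in $\ba$ or in $\bb$, use compatibility of the flexions with shuffles plus the symmetrality of $A$ and $B$ to factor the interleaved arguments, and collect the sum over splittings of the passive sequence into $\mmu(A,B)$. The only real difference is that the paper takes the one-sequence expansion as immediate from the definition of $\gaxit$ applied to a length-one bimould --- no spurious $\fO$-factors arise and no $\mmu(A,B)$-term appears at that stage --- so the step you single out as the delicate one is in fact the routine one, while the substance lies in the shuffle-splitting and factorization you correctly describe.
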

\begin{proof}
  The computation proceeds combinatorially:
  \begin{align}
    &\gaxit(A,B)(\fO)(\ba\sh\bb)\\
    &=\begin{multlined}[t]\sum_{\substack{\ba=\ba_{1}x\ba_{2}\\\bb=\bb_{1}\bb_{2}}}A((\ba_{1}\sh\bb_{1})\flr{x})\fO(\ful{\ba_{1}\bb_{1}}x\fur{\ba_{2}\bb_{2}})B(\fll{x}(\ba_{2}\sh\bb_{2}))\\
      +\sum_{\substack{\ba=\ba_{1}\ba_{2}\\\bb=\bb_{1}y\bb_{2}}}A((\ba_{1}\sh\bb_{1})\flr{y})\fO(\ful{\ba_{1}\bb_{1}}y\fur{\ba_{2}\bb_{2}})B(\fll{y}(\ba_{2}\sh\bb_{2}))
    \end{multlined}\\
    &=\begin{multlined}[t]\sum_{\substack{\ba=\ba_{1}x\ba_{2}\\\bb=\bb_{1}\bb_{2}}}A(\ba_{1}\flr{x}\sh\bb_{1}\flr{x})\fO(\ful{\ba_{1}\bb_{1}}x\fur{\ba_{2}\bb_{2}})B(\fll{x}\ba_{2}\sh\fll{x}\bb_{2})\\
      +\sum_{\substack{\ba=\ba_{1}\ba_{2}\\\bb=\bb_{1}y\bb_{2}}}A(\ba_{1}\flr{y}\sh\bb_{1}\flr{y})\fO(\ful{\ba_{1}\bb_{1}}y\fur{\ba_{2}\bb_{2}})B(\fll{y}\ba_{2}\sh\fll{y}\bb_{2})
    \end{multlined}\\
    &=\begin{multlined}[t]\sum_{\substack{\ba=\ba_{1}x\ba_{2}\\\bb=\bb_{1}\bb_{2}}}A(\ba_{1}\flr{x})A(\bb_{1}\flr{x})\fO(\ful{\ba_{1}\bb_{1}}x\fur{\ba_{2}\bb_{2}})B(\fll{x}\ba_{2})B(\fll{x}\bb_{2})\\
      +\sum_{\substack{\ba=\ba_{1}\ba_{2}\\\bb=\bb_{1}y\bb_{2}}}A(\ba_{1}\flr{y})A(\bb_{1}\flr{y})\fO(\ful{\ba_{1}\bb_{1}}y\fur{\ba_{2}\bb_{2}})B(\fll{y}\ba_{2})B(\fll{y}\bb_{2})
    \end{multlined}\\
    &=\begin{multlined}[t]\sum_{\ba=\ba_{1}x\ba_{2}}A(\ba_{1}\flr{x})\fO(\ful{\ba_{1}\bb_{1}}x\fur{\ba_{2}\bb_{2}})B(\fll{x}\ba_{2})\sum_{\bb=\bb_{1}\bb_{2}}A(\bb_{1}\flr{x})B(\fll{x}\bb_{2})\\
      +\sum_{\bb=\bb_{1}y\bb_{2}}A(\bb_{1}\flr{y})\fO(\ful{\ba_{1}\bb_{1}}y\fur{\ba_{2}\bb_{2}})B(\fll{y}\bb_{2})\sum_{\ba=\ba_{1}\ba_{2}}A(\ba_{1}\flr{y})B(\fll{y}\ba_{2})
    \end{multlined}\\
    &=\begin{multlined}[t]\sum_{\ba=\ba_{1}x\ba_{2}}A(\ba_{1}\flr{x})\fO(\ful{\ba_{1}}x\fur{\ba_{2}}\fur{\bb})B(\fll{x}\ba_{2})\mmu(A,B)(\fll{x}\bb)\\
      +\sum_{\bb=\bb_{1}y\bb_{2}}A(\bb_{1}\flr{y})\fO(\ful{\ba}\ful{\bb_{1}}y\fur{\bb_{2}})B(\fll{y}\bb_{2})\mmu(A,B)(\ba\flr{y}).
    \end{multlined}
  \end{align}
\end{proof}

\begin{theorem}[{\cite[(4.6)]{ecalle11}}]\label{thm:sro_dimorphy}
  The bimould
  \[\dTo(\re^{-1})=\sum_{r=1}^{\infty}\frac{1}{r(r+1)}\dro_{r}\]
  is $\fO$-alternal.
\end{theorem}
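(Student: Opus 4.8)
The plan is to establish the $\fO$-alternality of $\dTo(\re^{-1})$ by a direct evaluation on shuffles; the three combinatorial lemmas preceding the statement (Lemmas \ref{lem:negelon}, \ref{lem:mu_factor} and \ref{lem:shuffle}) are precisely the toolkit for this. By definition $\dTo(\re^{-1})$ is $\fO$-alternal exactly when $\ganit(\foz)^{-1}(\dTo(\re^{-1}))$ is alternal — equivalently, when $\gamit(\foz)^{-1}(\dTo(\re^{-1}))$ is alternal, by the criterion proved just after Lemma \ref{lem:garit_os} — so I would evaluate this twisted bimould on an arbitrary shuffle $\ba\sh\bb$ with $\ba,\bb\neq\emp$ and show it vanishes. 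I would assemble the evaluation from three ingredients: the defining formula $\dTo(\re^{-1})=\sum_{r\geq 1}\frac{1}{r(r+1)}\dro_{r}$ together with the explicit length-$r$ expression for $\dro_{r}$ carried over from \cite{kawamura25} (a sum over decompositions whose summands are products of the conjugate unit $\fO$ at flexed partial sums, ultimately traceable to the first fundamental identity, Theorem \ref{thm:first}); the explicit shape of the twisting operator, governed by the $\fos$-products recorded in the proof of Lemma \ref{lem:garit_os} and the identity $\ganit(\foz)^{-1}(\foz)=\fos$ of \eqref{eq:ganit_os}; and the shuffle-evaluation of the elementary block $\gaxit(A,B)(\fO)$ supplied by Lemma \ref{lem:shuffle} for symmetral $A,B$.

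Combining these, the $\mmu$-powers that appear would be rewritten by Lemma \ref{lem:mu_factor} as $\sum_{i}\binom{N}{i}\mmu^{i}(\,\cdot\,-1)$, which is the source of the binomials $\binom{j-1}{c}$ and $\binom{s-j}{d}$ in the quantity $F^{r}_{k,l,h}$ of Lemma \ref{lem:negelon}; then I would apply the tripartite relation for $\fO$ to collapse the products of $\fO$'s into a three-factor normal form, the three surviving factors being indexed by $k,l,h$ and contributing the remaining binomials $\binom{c}{k}$, $\binom{d+1}{l}$, $\binom{c+d+1}{h}$, while the coefficients $\frac{1}{r(r+1)}$ degenerate, after the decompositions, into the weights $\frac{s+1-j}{s(s+1)}$. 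After collecting like terms, the coefficient of each distinct monomial $\fO(\cdots)\fO(\cdots)\fO(\cdots)$ in the shuffle-evaluation should be, up to sign, exactly $F^{r}_{k,l,h}$, hence $0$ by Lemma \ref{lem:negelon}; this gives that the twisted bimould is alternal, i.e. that $\dTo(\re^{-1})$ is $\fO$-alternal. I note that this $\fO$-alternality is precisely what feeds the $\gira$-dilator argument — the swap-side counterpart of Proposition \ref{prop:dilator_symmetral} — to conclude that $\doss$ is symmetral, completing Theorem \ref{thm:bisymmetral}.

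The hard part will be the combinatorial bookkeeping in the middle step: one must propagate the left- and right-flexion operations correctly through all the nested decompositions of $\ba$ and $\bb$, invoke the tripartite relation just enough times to reach the three-factor form, and recognise the emerging triple binomial sum as $F^{r}_{k,l,h}$ on the nose — including the degenerate parameter ranges (such as $k+l=0$) that Lemma \ref{lem:negelon} was deliberately phrased to cover. A secondary point requiring care is pinning down the precise explicit formula for $\dro_{r}$ imported from \cite{kawamura25}, since it is exactly that formula which legitimises the reduction to the $\gaxit(A,B)(\fO)$-blocks handled by Lemmas \ref{lem:shuffle} and \ref{lem:mu_factor}.
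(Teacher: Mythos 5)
Your plan reproduces the paper's own proof essentially verbatim: evaluate $\ganit(\foz)^{-1}(\dTo(\re^{-1}))$ on $\ba\sh\bb$ via the explicit formula for $\dro_{r}$, reduce to the $\gaxit(\cdot,\cdot)(\fO)$-blocks of Lemma \ref{lem:shuffle}, expand with Lemma \ref{lem:mu_factor}, and identify the coefficient of each monomial with $F^{r}_{k,l,h}$, which vanishes by Lemma \ref{lem:negelon}. The only slip is a swapped attribution in the bookkeeping --- in the paper the binomials $\binom{j-1}{c},\binom{s-j}{d}$ come from a plain binomial expansion of $\mmu$-powers of $1-(\pari\circ\anti)(\fos)$ while $\binom{c}{k},\binom{d+1}{l},\binom{c+d+1}{h}$ come from Lemma \ref{lem:mu_factor}, not from the tripartite relation --- but this does not affect the soundness of the approach.
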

\begin{proof}
  Since $\ganit$ is homomorphism under the $\gani$-product (\cite[Proposition 3.10]{kawamura25}) and $\invgani(\foz)=(\pari\circ\anti)(\fos)$ (\cite[Proposition 5.3 (1)]{kawamura25}), we have
  \[
  \ganit(\foz)^{-1}(\dTo(\re^{-1}))(\bw)
  =\sum_{s=1}^{r}\sum_{\bw=b_{1}\bc_{1}\cdots b_{s}\bc_{s}}\dTo(\re^{-1})(b_{1}\fur{\bc_{1}}\cdots b_{s}\fur{\bc_{s}})\prod_{i=1}^{s}(\pari\circ\anti)(\fos)(\fll{b_{i}}\bc_{i})
  \]
  due to \cite[Lemma 3.9]{kawamura25}, for a variable sequence $\bw\in\fV$ with $\ell(\bw)=r\ge 1$. Here each bold symbol (resp.~italic letter) denotes a subsequence of $\bw$ (resp.~letter included in $\bw$).
  Using the explicit formula
  \[\dro_{r}(b'_{1},\ldots,b'_{s})=\sum_{j=1}^{s}(s+1-j)\foz((b'_{1}\cdots b'_{j-1})\flr{b'_{j}})\fO(\ful{b'_{1}\cdots b'_{j-1}}b'_{j}\fur{b'_{j+1}\cdots b'_{s}})\foz(\fll{b'_{j}}(b'_{j+1}\cdots b'_{s}))\]
  of $\dro$ (\cite[Lemma 6.8]{kawamura25}), we have
  \begin{align}
    &\ganit(\foz)^{-1}(\dTo(\re^{-1}))(\bw)\\
    &=\begin{multlined}[t]\sum_{s=1}^{r}\sum_{\bw=b_{1}\bc_{1}\cdots b_{s}\bc_{s}}\sum_{j=1}^{s}\frac{s+1-j}{s(s+1)}\foz\left((b_{1}\fur{\bc_{1}}\cdots b_{j-1}\fur{\bc_{j}})\flr{(b_{j}\fur{\bc_{j}})}\right)\\
      \cdot\fO\left(\ful{\left(\prod_{h=1}^{j-1}b_{h}\fur{\bc_{h}}\right)}(b_{j}\fur{\bc_{j}})\fur{\left(\prod_{h=j+1}^{s}b_{h}\fur{\bc_{h}}\right)}\right)\foz\left(\fll{(b_{j}\fur{\bc_{j}})}(b_{j+1}\fur{\bc_{j+1}}\cdots b_{s}\fur{\bc_{s}})\right)\prod_{i=1}^{s}(\pari\circ\anti)(\fos)(\fll{b_{i}}\bc_{i})
    \end{multlined}\\
    &=\begin{multlined}[t]\sum_{s=1}^{r}\sum_{\bw=b_{1}\bc_{1}\cdots b_{s}\bc_{s}}\sum_{j=1}^{s}\frac{s+1-j}{s(s+1)}\left(\prod_{h=1}^{j-1}\fO(b_{h}\fur{\bc_{h}}\flr{b_{j}})\right)\fO\left(\ful{\left(\prod_{h=1}^{j-1}b_{h}\bc_{h}\right)}(b_{j}\fur{\bc_{j}})\fur{\left(\prod_{h=j+1}^{s}b_{h}\bc_{h}\right)}\right)\\
      \cdot\left(\prod_{h=j+1}^{s}\fO(\fll{b_{j}}b_{h}\fur{\bc_{h}})\right)\prod_{i=1}^{s}(\pari\circ\anti)(\fos)(\fll{b_{i}}\bc_{i})
    \end{multlined}\\
    &=\begin{multlined}[t]\sum_{s=1}^{r}\sum_{\bw=b_{1}\bc_{1}\cdots b_{s}\bc_{s}}\sum_{j=1}^{s}\frac{s+1-j}{s(s+1)}\left(\prod_{h=1}^{j-1}\fO(b_{h}\fur{\bc_{h}}\flr{b_{j}})(\pari\circ\anti)(\fos)(\fll{b_{h}}\bc_{h})\right)\\
      \cdot\fO\left(\ful{\left(\prod_{h=1}^{j-1}b_{h}\bc_{h}\right)}(b_{j}\fur{\bc_{j}})\fur{\left(\prod_{h=j+1}^{s}b_{h}\bc_{h}\right)}\right)(\pari\circ\anti)(\fos)(\fll{b_{j}}\bc_{j})\left(\prod_{h=j+1}^{s}\fO(\fll{b_{j}}b_{h}\fur{\bc_{h}})(\pari\circ\anti)(\fos)(\fll{b_{h}}\bc_{h})\right).
    \end{multlined}
  \end{align}
  For the above sum, writing
  \[b_{h}\bc_{h}=\binom{u_{h,0},u_{h,1},\ldots,u_{h,c_{h}}}{v_{h,0},\ldots,v_{h,c_{h}}},\]
  we can apply the equality
  \begin{align}
    &\fO(b_{h}\fur{\bc_{h}}\flr{b_{j}})(\pari\circ\anti)(\fos)(\fll{b_{h}}\bc_{h})\\
    &=(-1)^{c_{h}}\fO\binom{u_{h,0}+\cdots+u_{h,c_{h}}}{v_{h,0}-v_{j,0}}\cdot\fos\binom{u_{h,c_{h}},\ldots,u_{h,1}}{v_{h,c_{h}}-v_{h,0},\ldots,v_{h,1}-v_{h,0}}\\
    &=(-1)^{c_{h}}\fO\binom{u_{h,0}+\cdots+u_{h,c_{h}}}{v_{h,0}-v_{j,0}}\cdot\fO\binom{u_{h,c_{h}}}{v_{h,c_{h}}-v_{h,c_{h}-1}}\cdots\fO\binom{u_{h,1}+\cdots+u_{h,c_{h}}}{v_{h,1}-v_{h,0}}\\
    &=(-1)^{c_{h}}\fos\binom{u_{h,c_{h}},\ldots,u_{h,1},u_{h,0}}{v_{h,c_{h}}-v_{j,0},\ldots,v_{h,1}-v_{j,0},v_{h,0}-v_{j,0}}\\
    &=-(\pari\circ\anti)(\fos)((b_{h}\bc_{h})\flr{b_{j}}),
  \end{align}
  so that the following is obtained:
  \begin{align}
    &\ganit(\foz)^{-1}(\dTo(\re^{-1}))(\bw)\\
    &=\begin{multlined}[t]\sum_{s=1}^{r}\sum_{\bw=b_{1}\bc_{1}\cdots b_{s}\bc_{s}}\sum_{j=1}^{s}(-1)^{s-1}\frac{s+1-j}{s(s+1)}\left(\prod_{h=1}^{j-1}(\pari\circ\anti)(\fos)((b_{h}\bc_{h})\flr{b_{j}})\right)\\
      \cdot\fO\left(\ful{\left(\prod_{h=1}^{j-1}b_{h}\bc_{h}\right)}(b_{j}\fur{\bc_{j}})\fur{\left(\prod_{h=j+1}^{s}b_{h}\bc_{h}\right)}\right)(\pari\circ\anti)(\fos)(\fll{b_{j}}\bc_{j})\left(\prod_{h=j+1}^{s}(\pari\circ\anti)(\fos)(\fll{b_{j}}(b_{h}\bc_{h}))\right)
    \end{multlined}\\
    &=\begin{multlined}[t]\sum_{s=1}^{r}\sum_{j=1}^{s}(-1)^{s-1}\frac{s+1-j}{s(s+1)}\sum_{\bw=\bx b\bc\by}\fO(\ful{\bx}(b\fur{\bc})\fur{\by})(\pari\circ\anti)(\fos)(\fll{b}\bc)\\
      \cdot\left(\sum_{\substack{\bx=\bc'_{1}\cdots\bc'_{j-1}\\ \bc'_{1},\ldots,\bc'_{j-1}\neq\emp}}\prod_{h=1}^{j-1}(\pari\circ\anti)(\fos)((\bc'_{h})\flr{b})\right)\left(\sum_{\substack{\by=\bc'_{j+1}\cdots\bc'_{s}\\ \bc'_{j+1},\ldots,\bc'_{s}\neq\emp}}\prod_{h=j+1}^{s}(\pari\circ\anti)(\fos)(\fll{b}(\bc'_{h}))\right)
    \end{multlined}\\
    &=\begin{multlined}[t]\sum_{s=1}^{r}\sum_{j=1}^{s}\frac{s+1-j}{s(s+1)}\sum_{\bw=\bx b\bc\by}\fO(\ful{\bx}(b\fur{\bc})\fur{\by})(\pari\circ\anti)(\fos)(\fll{b}\bc)\mmu^{j-1}(1-(\pari\circ\anti)(\fos))(\bx\flr{b})\\
      \cdot\mmu^{s-j}(1-(\pari\circ\anti)(\fos))(\fll{b}\by)
    \end{multlined}\\
    &=\begin{multlined}[t]\sum_{s=1}^{r}\sum_{j=1}^{s}\frac{s+1-j}{s(s+1)}\sum_{\bw=\bx b\by'}\fO(\ful{\bx}b\fur{\by'})\mmu^{j-1}(1-(\pari\circ\anti)(\fos))(\bx\flr{b})\\
      \cdot\mmu\left((\pari\circ\anti)(\fos)\mmu^{s-j}(1-(\pari\circ\anti)(\fos))\right)(\fll{b}\by').
    \end{multlined}
  \end{align}
  By the binomial expansion, we moreover get the expression
  \begin{align}
    &\ganit(\foz)^{-1}(\dTo(\re^{-1}))(\bw)\\
    &=\begin{multlined}[t]\sum_{1\le j\le s\le r}\frac{s+1-j}{s(s+1)}\sum_{\bw=\bx b\by}\fO(\ful{\bx}(b\fur{\bc})\fur{\by})\sum_{c=0}^{j-1}(-1)^{c}\binom{j-1}{c}\mmu^{c}((\pari\circ\anti)(\fos))(\bx\flr{b})\\
      \cdot\sum_{d=0}^{s-j}(-1)^{d}\binom{s-j}{d}\mmu^{d+1}((\pari\circ\anti)(\fos))(\fll{b}\by)
    \end{multlined}\\
    &=\begin{multlined}[t]\sum_{1\le j\le s\le r}\frac{s+1-j}{s(s+1)}\sum_{\substack{0\le c\le j-1\\ 0\le d\le s-j}}(-1)^{c+d}\binom{j-1}{c}\binom{s-j}{d}\\
      \cdot\gaxit(\mmu^{c}((\pari\circ\anti)(\fos)),\mmu^{d+1}((\pari\circ\anti)(\fos)))(\fO)(\bw).
    \end{multlined}
  \end{align}
  Hence we have, by Lemma \ref{lem:shuffle}, for every non-empty $\ba$ and $\bb$,
  \begin{align}
    &\ganit(\foz)^{-1}(\dTo(\re^{-1}))(\ba\sh\bb)\\
    &=\begin{multlined}[t]\sum_{1\le j\le s\le a+b}\frac{s+1-j}{s(s+1)}\sum_{\substack{0\le c\le j-1\\ 0\le d\le s-j}}(-1)^{c+d}\binom{j-1}{c}\binom{s-j}{d}\\
    \cdot\left(\sum_{\ba=\ba_{1}x\ba_{2}}\mmu^{c}((\pari\circ\anti)(\fos))(\ba_{1}\flr{x})\fO(\ful{\ba_{1}}x\fur{\ba_{2}}\fur{\bb})\right.\\
    \cdot\mmu^{d+1}((\pari\circ\anti)(\fos))(\fll{x}\ba_{2})\mmu^{c+d+1}((\pari\circ\anti)(\fos))(\fll{x}\bb)\\
    +\sum_{\bb=\bb_{1}y\bb_{2}}\mmu^{c}((\pari\circ\anti)(\fos))(\bb_{1}\flr{y})\fO(\ful{\ba}\ful{\bb_{1}}y\fur{\bb_{2}})\\
    \left.\phantom{\sum_{\bb_{1}}}\cdot\mmu^{d+1}((\pari\circ\anti)(\fos))(\fll{y}\bb_{2})\mmu^{c+d+1}((\pari\circ\anti)(\fos))(\ba\flr{y})\right)
  \end{multlined}
  \end{align}
  Applying Lemma \ref{lem:mu_factor} for each power of $(\pari\circ\anti)(\fos)$, one shows
  \begin{align}
    &\ganit(\foz)^{-1}(\dTo(\re^{-1}))(\ba\sh\bb)\\
    &=\begin{multlined}[t]\sum_{1\le j\le s\le a+b}\frac{s+1-j}{s(s+1)}\sum_{\substack{0\le c\le j-1\\ 0\le d\le s-j}}(-1)^{c+d}\binom{j-1}{c}\binom{s-j}{d}\\
    \cdot\left(\sum_{\ba=\ba_{1}x\ba_{2}}\sum_{k=0}^{\ell(\ba_{1})}\binom{c}{k}\mmu^{k}((\pari\circ\anti)(\fos)-1)(\ba_{1}\flr{x})\fO(\ful{\ba_{1}}x\fur{\ba_{2}}\fur{\bb})\right.\\
    \cdot\sum_{l=0}^{\ell(\ba_{2})}\binom{d+1}{l}\mmu^{l}((\pari\circ\anti)(\fos)-1)(\fll{x}\ba_{2})\sum_{h=0}^{\ell(\bb)}\binom{c+d+1}{h}\mmu^{h}((\pari\circ\anti)(\fos)-1)(\fll{x}\bb)\\
    +\sum_{\bb=\bb_{1}y\bb_{2}}\sum_{k=0}^{\ell(\bb_{1})}\binom{c}{k}\mmu^{k}((\pari\circ\anti)(\fos)-1)(\bb_{1}\flr{y})\fO(\ful{\ba}\ful{\bb_{1}}y\fur{\bb_{2}})\\
    \left.\cdot\sum_{l=0}^{\ell(\bb_{2})}\binom{d+1}{l}\mmu^{l}((\pari\circ\anti)(\fos)-1)(\fll{y}\bb_{2})\sum_{h=0}^{\ell(\ba)}\binom{c+d+1}{l}\mmu^{l}((\pari\circ\anti)(\fos)-1)(\ba\flr{y})\right)
  \end{multlined}\\
    &=\begin{multlined}[t]
      \sum_{\ba=\ba_{1}x\ba_{2}}\sum_{\substack{0\le k\le \ell(\ba_{1})\\ 0\le l\le \ell(\ba_{2})\\ 1\le h\le \ell(\bb)}}F_{k,l,h}^{a+b}\mmu^{k}((\pari\circ\anti)(\fos)-1)(\ba_{1}\flr{x})\fO(\ful{\ba_{1}}x\fur{\ba_{2}}\fur{\bb})\\
      \cdot\mmu^{l}((\pari\circ\anti)(\fos)-1)(\fll{x}\ba_{2})\mmu^{h}((\pari\circ\anti)(\fos)-1)(\fll{x}\bb)\\
      +\sum_{\bb=\bb_{1}y\bb_{2}}\sum_{\substack{0\le k\le \ell(\bb_{1})\\ 0\le l\le \ell(\bb_{2})\\ 1\le h\le \ell(\ba)}}F_{k,l,h}^{a+b}\mmu^{k}((\pari\circ\anti)(\fos)-1)(\bb_{1}\flr{y})\fO(\ful{\ba}\ful{\bb_{1}}y\fur{\bb_{2}})\\
      \cdot\mmu^{l}((\pari\circ\anti)(\fos)-1)(\fll{y}\bb_{2})\mmu^{h}((\pari\circ\anti)(\fos)-1)(\ba\flr{y}).
    \end{multlined}
  \end{align} 
  Note that, in the previous sums, the $h=0$ terms vanish as $\ba,\bb\neq\emp$.
  Therefore we have $\ganit(\foz)^{-1}(\dTo(\re^{-1}))(\ba\sh\bb)=0$ due to Lemma \ref{lem:negelon}. 
\end{proof}

\begin{theorem}[$=$ Theorem \ref{thm:bisymmetral}]\label{thm:bisymmetral2}
  The bimoulds $\fess$ and $\doss$ are symmetral.
\end{theorem}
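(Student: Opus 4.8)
The symmetrality of $\fess$ is \cite[Proposition 6.13]{kawamura25} (proved there for a more general family), and by the symmetry exchanging the flexion unit $\fE$ with its conjugate $\fO$ the same argument gives the symmetrality of $\foss$; the content of the theorem is therefore that the swappee $\doss=\swap(\fess)$ is symmetral, which is not formal since $\swap$ does not preserve symmetrality. My plan is to run \'{E}calle's dilator argument on the swap side. This strengthens \cite[Theorem 7.9]{kawamura25}, where the weaker criterion \cite[Lemma 7.7]{kawamura25} applied to a weak form of the dilator gave only $\gantar(\doss)=\doss$; here that criterion is replaced by Proposition \ref{prop:dilator_symmetral} and the weak alternality of the dilator by its full $\fO$-alternality (Theorem \ref{thm:sro_dimorphy}).

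Concretely, since $\fess$ is symmetral it admits an alternal $\gari$-dilator $D$, i.e.\ $\der(\fess)=\preari(\fess,D)$ with $D$ alternal, by Proposition \ref{prop:dilator_symmetral}. Applying $\swap$ and transporting $\der$, $\preari$ and $\gari$ through the first fundamental identity (Theorem \ref{thm:first}), in the manner of the proof of Corollary \ref{cor:first}, turns this into a $\gira$-dilator identity for $\doss$; after the $\ganit(\foz)$-conjugation dictated by that identity it takes the shape of an ordinary $\gari$-dilator equation $\der(\widetilde S)=\preari(\widetilde S,\widetilde D)$, where $\widetilde S$ and $\widetilde D$ are the $\ganit(\foz)$-normalizations of $\doss$ and of the transported dilator. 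Theorem \ref{thm:sro_dimorphy} is precisely the identification of this dilator: it shows that $\widetilde D$ is a $\ganit(\foz)$-conjugate of $\dTo(\re^{-1})=\sum_{r\ge1}\tfrac1{r(r+1)}\dro_r$ and that $\dTo(\re^{-1})$ is $\fO$-alternal, i.e.\ that $\widetilde D$ is alternal. Proposition \ref{prop:dilator_symmetral} then forces $\widetilde S$, and hence $\doss$, to be symmetral, using that $\ganit(\foz)$ and $\swap$ interchange the symmetral classes in play (compare Lemma \ref{lem:garit_os} and the symmetrality-preservation facts of \cite{kawamura25}). Running the same argument with $\fE$ and $\fO$ exchanged gives symmetrality of $\dess=\swap(\foss)$ as well, so $\fess$ is bisymmetral.

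The delicate point will be the middle step: transporting the $\gari$-dilator identity for $\fess$ across $\swap$ into exactly the normal form whose dilator is the bimould $\dTo(\re^{-1})$ of Theorem \ref{thm:sro_dimorphy}, keeping the $\ganit(\foz)$-conjugation, the signs and the length shifts all in order --- this is what the combinatorial inputs, Lemmas \ref{lem:mu_factor} and \ref{lem:shuffle} and the vanishing identity of Lemma \ref{lem:negelon}, are deployed to absorb within the proof of Theorem \ref{thm:sro_dimorphy}. Once the identity is in that form, Theorem \ref{thm:sro_dimorphy} supplies the alternality of $\widetilde D$ and Proposition \ref{prop:dilator_symmetral} closes the argument.
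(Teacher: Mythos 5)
Your proposal follows essentially the same route as the paper: the paper likewise reduces the statement to the symmetrality of $\doss$, invokes the dilator identity $(\der\circ\invgari)(\doss)=\preari(\invgari(\doss),\ganit(\foz)^{-1}(\dTo(\re^{-1})))$ already established in the proof of \cite[Theorem 7.9]{kawamura25}, and concludes by combining Theorem \ref{thm:sro_dimorphy} (which gives the alternality of $\ganit(\foz)^{-1}(\dTo(\re^{-1}))$, with Lemmas \ref{lem:negelon}, \ref{lem:mu_factor} and \ref{lem:shuffle} absorbed there exactly as you anticipate) with Proposition \ref{prop:dilator_symmetral}. The only cosmetic difference is that the object carrying the ordinary $\gari$-dilator equation is $\invgari(\doss)$ rather than a $\ganit(\foz)$-conjugate of $\doss$, so the final transfer of symmetrality back to $\doss$ goes through the group structure of $\GARI_{\as}$ rather than a $\ganit(\foz)$-type symmetrality-preservation lemma.
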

\begin{proof}
  As we mentioned already, we only show the assertion for $\doss$.
  In the proof of \cite[Theorem 7.9]{kawamura25}, we have the equality
  \[(\der\circ\invgari)(\doss)=\preari(\invgari(\doss),\ganit(\foz)^{-1}(\dTo(\re^{-1}))).\]
  Hence Theorem \ref{thm:sro_dimorphy} and Proposition \ref{prop:dilator_symmetral} yields the symmetrality of $\doss$.
\end{proof}


\begin{thebibliography}{FK}
    \bibitem[E1]{ecalle11} J.~\'{E}calle, \emph{The flexion structure and dimorphy: flexion units, singulators, generators, and the enumeration of multizeta irreducibles}, With computational assistance from S.~Carr.~CRM Series, 12, Asymptotics in dynamics, geometry and PDEs; generalized Borel summation.~Vol.~II, 27--211, Ed.~Norm.~, Pisa, 2011.
    \bibitem[E2]{ecalle15} J.~\'{E}calle, \emph{Eupolars and their bialternality grid}, Acta Math.~Vietnam.~\textbf{40} (2015), 545--636.
    \bibitem[EF]{ef25} B.~Enriquez and H.~Furusho, \emph{Double shuffle Lie algebra and special derivations}, preprint, \texttt{arXiv:2505.02265}.
    \bibitem[FK]{fk23} H.~Furusho and N.~Komiyama, \emph{Kashiwara--Vergne and dihedral bigraded Lie algebras in mould theory}, Ann.~Fac.~Sc.~Toulouse \textbf{32} (2023), 655--725.
    \bibitem[Ka]{kawamura25} H.~Kawamura, \emph{A note on flexion units}, preprint, \texttt{arXiv:2506.22825}.
    \bibitem[Ko]{komiyama21} N.~Komiyama, \emph{On properties of adari(pal) and ganit(pic)}, preprint, \texttt{2110.04834}.
    \bibitem[S1]{schneps12} L.~Schneps, \emph{Double shuffle and Kashiwara--Vergne Lie algebras}, J.~Algebra \textbf{367} (2012), 54--74.
    \bibitem[S2]{schneps15} L.~Schneps, \emph{ARI, GARI, Zig and Zag: An introduction to \'{E}calle's theory of multiple zeta values}, preprint, \texttt{arXiv:1507.01534v4}.
    \bibitem[S3]{schneps25} L.~Schneps, \emph{The double shuffle Lie algebra injects into the Kashiwara--Vergne Lie algebra}, preprint, \texttt{arXiv:2504.14293}.
\end{thebibliography}
\end{document}